\documentclass[11pt,reqno]{amsart}

\usepackage{bm}
\usepackage{bbm}
\usepackage{mathrsfs}
\usepackage{amsmath, amsthm, amsfonts, amssymb, amsbsy, graphicx, enumerate, caption}
\usepackage{array}
\usepackage{makecell,multirow,tabularx,ragged2e}
\usepackage{cellspace}
\usepackage{booktabs}
\usepackage{tikz}
\usepackage{times}
\usepackage{hyperref}
\usepackage{dsfont}
\usepackage{enumitem}
\usepackage[margin=1in]{geometry}
\usepackage[foot]{amsaddr}
\usepackage{color}
\usepackage{parskip}
\usepackage{mathrsfs}
\usepackage{stmaryrd}
\usepackage{cases}
\usepackage[T1]{fontenc}

\newtheorem{theorem}{Theorem}[section]
\newtheorem{proposition}{Proposition}[section]
\newtheorem{lemma}{Lemma}[section]

\newtheorem{definition}{Definition}[section]

\newtheorem{remark}{Remark}[section]

\newcommand{\R}{\mathbb{R}}
\newcommand{\N}{\mathbb{N}}

\newcommand{\Z}{\mathbb{Z}}
\renewcommand{\P}{\mathbb{P}}
\newcommand{\E}{\mathbb{E}}
\newcommand{\1}{\mathbf{1}}

\newcommand{\eps}{\epsilon}

\newcommand{\bdy}{\partial}
\newcommand{\dist}{\mathrm{dist}}

\newcommand{\diam}{\mathrm{diam}}
\newcommand{\wt}{\widehat}

\newcommand{\llb}{\llbracket}
\newcommand{\rrb}{\rrbracket}

\newcommand{\Prog}{\mathrm{Prog}}
\newcommand{\wtProg}{\mathrm{\widehat{P}rog}}

\newcommand{\T}{\mathcal{T}}
\renewcommand{\AA}{\mathcal{A}}
\newcommand{\C}{\mathscr{C}}
\newcommand{\D}{\mathscr{D}}

\newcommand{\Part}{\mathrm{Part}}

\newcommand{\sep}{\mathrm{sep}}

\renewcommand{\dim}{d}
\newcommand{\num}{n}

\newcommand{\kk}{m}
\newcommand{\nn}{n}
\newcommand{\dd}{d}
\newcommand{\X}{\mathscr{X}}

\newcommand{\Tup}{\mathrm{Tup}}
\newcommand{\comp}{\mathrm{comp}}
\newcommand{\cal}{\mathcal}
\newcommand{\scr}{\mathscr}
\newcommand{\Stay}{\mathsf{Stay}}
\newcommand{\Stable}{\mathsf{CanPersist}}
\newcommand{\NoSmallComps}{\mathsf{NoSmallComp}}
\newcommand{\TreadPart}{\mathsf{Tread}}
\newcommand{\AddToBall}{\mathsf{AddToBall}}
\newcommand{\DepleteSmallComps}{\mathsf{DepleteSmallComps}}

\newcommand{\Growth}{\mathsf{Growth}}


\title{Critical numerosity in collective behavior}
\author{Jacob Calvert}
\email{jacob\_calvert@berkeley.edu}

\subjclass{60J10, 60K35, and 82C22.}
\keywords{Markov chains, critical numerosity, collective behavior, programmable matter.}

\begin{document}

\begin{abstract}
Natural collectives, despite comprising individuals who may not know their numerosity, can exhibit behaviors that depend sensitively on it. This paper proves that the collective behavior of number-oblivious individuals can even have a {\em critical numerosity}, above and below which it qualitatively differs. We formalize the concept of critical numerosity in terms of a family of zero--one laws and introduce a model of collective motion, called {\em chain activation and transport} (CAT), that has one.

CAT describes the collective motion of $n \geq 2$ individuals as a Markov chain that rearranges $n$-element subsets of the $d$-dimensional grid, $m < n$ elements at a time. According to the individuals' dynamics, with each step, CAT removes $m$ elements from the set and then progressively adds $m$ elements to the boundary of what remains, in a way that favors the consecutive addition and removal of nearby elements. This paper proves that, if $d \geq 3$, then CAT has a critical numerosity of $n_c = 2m+2$ with respect to the behavior of its diameter. Specifically, if $n < n_c$, then the elements form one ``cluster,'' the diameter of which has an a.s.--finite limit infimum. However, if $n \geq n_c$, then there is an a.s.--finite time at which the set consists of clusters of between $m+1$ and $2m+1$ elements, and forever after which these clusters grow apart, resulting in unchecked diameter growth.

The existence of critical numerosities means that collectives can exhibit ``phase transitions'' that are governed purely by their numerosity and not, for example, their density or the strength of their interactions. This fact challenges prevalent beliefs about collective behavior; inspires basic scientific questions about the role of numerosity in the behavior of natural collectives; and suggests new functionality for programmable matter, like robot swarms, smart materials, and synthetic biological systems. More broadly, it demonstrates an opportunity to explore the possible behaviors of natural and engineered collectives through the study of random processes that rearrange finite sets.
\end{abstract}

\maketitle

\setcounter{tocdepth}{1}
\tableofcontents

\section{Introduction}

A fundamental question of any collective behavior is: How numerous must the collective be to exhibit this behavior? In particular, is there a {\em critical numerosity}\footnote{We use ``numerosity'' because it is more descriptive than ``number'' and, unlike ``size,'' cannot be confused with the spatial extent of the collective.} above which the collective exhibits this behavior but below which it does not? Experimental evidence suggests that natural collectives might exhibit critical numerosities with respect to important behaviors (Table~\ref{exp summary}). 
This possibility is remarkable because the constituents of these collectives do not necessarily know their number, although they may be able to infer it \cite{pratt2005}. To explore these questions, this paper formalizes the concept of critical numerosity and introduces a model of collective motion that provably exhibits critical numerosities.

\renewcommand*{\thead}[1]{\multicolumn{1}{c}{\bfseries #1}}
\bgroup
\def\arraystretch{1.5}
\begin{table}[ht]
\centering
\caption{Six experiments that suggest the existence of critical numerosities ($n_c$). The precise definitions of behavior and suggested values of $n_c$ depend on experimental details.}
\begin{tabular}{ l l c c}
\toprule
\thead{Type of collective} & \thead{Behavior} & \makecell[cc]{{\bf Suggested $n_c$}} & \thead{Reference} \\ 
\midrule
\makecell*[tl]{Colony of Pharaoh's ants\\ ({\em Monomorium} {\em pharaonis})} & \makecell[tl]{Foraging is trail-based} & $\approx 700$ & \cite{beekman2001}\\ 
\makecell*[tl]{Swarm of midges\\ ({\em Chironomus riparius})} & \makecell[tl]{Volume-per-midge\\ is saturated} & $\approx 10$ & \cite{puckett2014}\\
\makecell*[tl]{Colony of leafcutter ants\\ ({\em Acromyrmex versicolor})} & \makecell[tl]{Population grows with\\ fungal cultivar mass} & $\approx 89$ & \cite{clark2014}\\
\makecell*[tl]{Cluster of water molecules} & Forms ice I & $\approx 90$ & \cite{moberg2019} \\ 
\makecell*[tl]{School of cichlid fish\\ ({\em Etroplus suratensis})} & \makecell[tl]{The likeliest state\\ of motion is isotropic} & $\approx 60$ & \cite{jhawar2020} \\
\makecell*[tl]{Colony of fire ants\\ ({\em Solenopsis invicta})} & \makecell[tl]{Forms a stable raft} & $\approx 10$ & \cite{ko2022} \\
\bottomrule
\end{tabular}
\label{exp summary}
\end{table}
\egroup 

The existence of critical numerosities means that collectives can exhibit phase transitions---in the sense of abrupt, qualitative changes---in behavior that are mediated {\em purely by the number of individuals} and not, for example, their density or the nature or strength of their interactions. This fact challenges conventional wisdom about collective behavior. For example, Ouellette and Gordon write that ``the properties and functionality of [a collective] arise from the interactions among the individuals. This means that no individual is essential for the group to function'' \cite{ouellette2021}. However, if the collective is at the critical numerosity for a behavior, then {\em every} individual is essential. For the same reason, the existence of critical numerosities suggests new functionality for programmable matter, like robot swarms. Indeed, an experimentally validated approach to programming matter uses a phase transition inspired by the Ising model; by increasing the strength of attractive interactions between individual robots, the swarm transitions from a dispersed phase to a compact phase that exhibits collective transport \cite{li2021}. Analogously, the existence of critical numerosities could enable programmable matter to exhibit different functionality depending on its numerosity.

\subsection{Notation}
{\em Concerning $\Z$ and $\R$}. For $i,j \in \Z$, we denote $\Z_{\geq i} = \{i, i+1, \dots\}$ and the integer interval $\llbracket i,j \rrbracket = \{k \in \Z: i \leq k \leq j\}$. In particular, we denote $\N = \Z_{\geq 0}$. We denote the nonnegative (positive) real numbers by $\R_{\geq 0}$ ($\R_{> 0}$). We use $[r]$ to denote the integer part of $r \in \R$.

{\em Concerning $\Z^d$}. For $d \in \Z_{\geq 1}$ and $A,B \subset \Z^d$, we define $\diam (A) := \sup_{x,y \in A} \|x-y\|$ and $\dist (A,B) := \inf_{x \in A, \, y \in B} \|x - y\|$, where $\|\cdot\|$ is the Euclidean norm. For $A \subset \Z^d$, we denote the boundary and closure of $A$ by 
$\bdy A := \{x \in \Z^d \setminus A: \dist(x,A) = 1\}$ and $\overline{A} = A \cup \bdy A$. 
We denote the discrete Euclidean ball of radius $r \in \R_{>0}$ about $x \in \Z^d$ by $\cal B_x (r) := \{y \in \Z^d: \|x - y\| < r\}$. For $i\leq j$, we denote by $L_{i,j}$ the line segment $\{i e_1, \dots, j e_1\}$, where $e_j$ denotes the $j$\textsuperscript{th} standard unit vector of $\R^d$, and we denote the collection of translates of such line segments by $\cal L := \big\{ L_{1,k} + x: k \in \Z_{\geq 1}, x \in \Z^d \big\}$. We define $\comp_A (x)$ to be the set of elements that are connected to $x \in \Z^d$ in $A \subseteq \Z^d$. In particular, $\comp_A (x) = \emptyset$ whenever $x \notin A$.

{\em Miscellaneous}. For $k \in \Z_{\geq 1}$, we use $A \subset_k B$ to denote that $A$ is a $k$-element subset of $B$. Additionally, we use $f \lesssim g$ to denote the estimate $f \leq c g$ for a ``universal'' positive number $c$ and $O(g)$ to denote such a quantity $f$. We use $f \gtrsim g$ and $\Omega (g)$ analogously, for the reverse estimate. Lastly, for a random process $\scr X = (\scr X_t)_{t \in \N}$ on a state space $\cal S$, we use $\P_X$ ($\E_X$) to denote the conditional distribution (expectation) of $\scr X$, given that $\scr X_0 = X$ for $X \in \cal S$, and we use $\sigma (\scr X)$ to denote the $\sigma$-field that $\scr X$ generates.

\subsection{Definition of critical numerosity}

We model the state of an individual as an element of $\Z^d$ and the state of the collective as an element of $\cal S \subseteq \{S \subset \Z^d: 2 \leq |S| < \infty\}$. The dynamics of the collective is a random process $\X = (\X_0, \X_1, \dots)$ on $\cal S$, which conserves the number of individuals: 
\begin{equation}\label{mass conservation}
\forall X \in \cal S, \quad \P_{X} ( |\X_0| = |\X_1| = \cdots ) = 1.
\end{equation}
We define critical numerosity in terms of a family of zero--one laws (Figure~\ref{nc figure}).

\begin{definition}[Critical numerosity]\label{def nc} We say that $\X$ has a critical numerosity of $n_c \in \Z_{\geq 2}$ with respect to $B \in \sigma (\X)$, if
\begin{equation}\label{nc cases}
\forall X \in \cal S, \quad \P_{X} (B) =
\begin{cases}
0 & |X| < n_c,\\ 
1 & |X| \geq n_c.
\end{cases}
\end{equation}
\end{definition}

\begin{figure}
\includegraphics[width=0.45\linewidth]{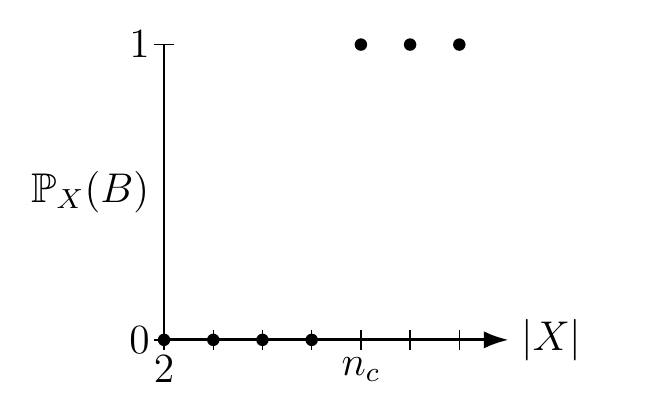}
\caption{A random process $\scr X = (\scr X_0, \scr X_1, \dots)$ on the state space $\cal S$ that satisfies \eqref{mass conservation} exhibits a critical numerosity of $n_c$ with respect to an event $B \in \sigma (\X)$ if this plot holds for every $X \in \cal S$.
}
\label{nc figure}
\end{figure}

Definition~\ref{def nc} aims to formalize the notion of a behavior $B$ that the collective modeled by $\scr X$ exhibits, if and only if it has $n_c$ or more constituents. While this definition suffices for our purposes, we note that its scope can be extended beyond discrete-time random processes on subsets of $\Z^d$, and there are alternative ways to formalize ``behavior'' and ``exhibits.'' For example, because a behavior $B$ can be any event in $\sigma (\X)$, Definition~\ref{def nc} permits contrived behaviors like $\{|\X_0| \geq n_c\}$ that satisfy \eqref{nc cases} through explicit reference to numerosity.\footnote{Similarly, because there is no restriction on the dynamics of $\X$, a contrived dynamics, like one that is nontrivial if and only if $|\X_0| \geq n_c$, can satisfy \eqref{nc cases} even if the behavior in question makes no reference to numerosity.} Disallowing behaviors in $\sigma (\cup_{t \in \N} |\X_t|)$ is an alternative. Concerning ``exhibits,'' an alternative replaces the condition that $\P_{X} (B) = 1$ when $|X| \geq n_c$ with the weaker condition that $\P_{X} (B) > 0$ when $|X| \geq n_c$.\footnote{While these conditions are generally inequivalent, if $\scr X$ is a recurrent Markov chain on $\cal S$ and if $B$ belongs to the tail $\sigma$-field of $\scr X$, then the zero--one law of Blackwell and Freedman \cite[Theorem 1]{blackwell1964} implies that they are equivalent.}

\subsection{Chain activation and transport}
The main results of this paper concern a model of collective motion, called {\em chain activation and transport} (CAT), which belongs to the class of Markov chains that rearrange finite subsets of a graph, like harmonic activation and transport \cite{calvert2021a,calvert2021b}, competitive erosion \cite{ganguly2017,ganguly2018,ganguly2019}, and the geometric amoebot model of programmable matter \cite{derakshandeh2014,cannon2016,arroyo2017,savoie2018,cannon2019,li2021}. In particular, CAT generalizes the Markov chain on the state space $\{S \subset \Z^d: 2 \leq |S| < \infty\}$ that, with each step, removes a uniformly random element of the set and then adds a uniformly random element to the boundary of what remains. 
The data underlying CAT are:
\begin{itemize}
\item $d \geq 1$, the dimension of the underlying grid;
\item $m \geq 1$, the number of elements that move with each step; and
\item $\beta \in \R$, a parameter that controls how far elements tend to move.
\end{itemize}

CAT is a Markov chain $(\C_t)_{t \geq 0}$ on the state space $\cal S = \{S \subset \Z^d: m+1 \leq |S| < \infty \}$ that, with each step, removes ({\em activates}) $m$ elements from the set and then adds ({\em transports}) $m$ elements to the boundary of the set, one element at a time, with a probability that depends on the distance between consecutive elements. In other words, there are $2m$ {\em substeps} and, denoting by $\C_{t,l}$ the set that results from $l$ substeps at time $t$, we have $\C_{t+1} = \C_{t,2m}$ (Figure~\ref{cat dyn fig}).

\begin{figure}
\includegraphics[width=0.7\linewidth]{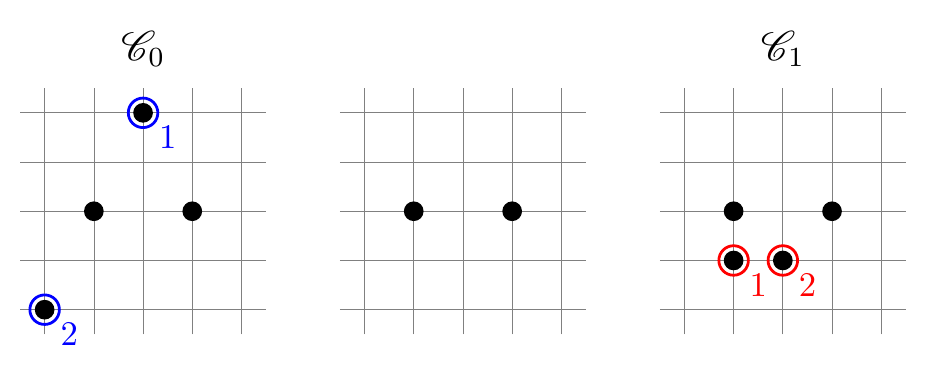}
\caption{An example of the dynamics with $(d,m,n) = (2,2,4)$. Activation occurs at the elements of $\C_0$ circled in blue (left), resulting in an intermediate set (middle). Transport occurs to the elements of $\C_1 = \C_{0,4}$ circled in red (right).}
\label{cat dyn fig}
\end{figure} 

We activate the first element uniformly at random in $\C_t$. Then, given that $x_l$ is the $l$\textsuperscript{th} element activated or transported, the $(l+1)$\textsuperscript{st} substep proceeds in one of two ways. If $l < m$, then we activate
\[
\text{$x_{l+1} \in \C_{t,l}$ with probability $\propto \| x_l - x_{l+1} \|^{-\beta}$.}
\] 
Otherwise, if $l \geq m$, then we transport to
\[
\text{$x_{l+1} \in \bdy \C_{t,l}$ with probability $\propto \| x_l - x_{l+1} \|^{-\beta}$.}
\]
In fact, to address the possibility that $x_l = x_{l+1}$ when $l=m$, we use $\phi (x_l,x_{l+1})$ in the place of $\|x_l- x_{l+1}\|^{-\beta}$, where
\begin{equation}\label{d def}
\phi (u,v) := \max\{\| u - v \|,1\}^{-\beta}
\end{equation}
for $u, v \in \Z^d$. Section~\ref{tp formula} provides a formula for the CAT transition probabilities.

We emphasize that, although activation begins uniformly at random under CAT, this does not mean that  individuals must know their numerosity for CAT to describe their collective motion. Indeed, we can think of CAT as describing the collective motion of number-oblivious individuals who initiate steps asynchronously, according to the times at which i.i.d.\ exponential clocks ring \cite{li2021}. Concerning the nonlocal components of the dynamics, we can think of CAT as describing individuals that respond to adaptive, long-range interactions like acoustic \cite{gorbonos2016,gorbonos2017,gorbonos2020b}, visual \cite{pearce2014,lavergne2019,bastien2020,harpaz2021}, or chemotactic \cite{daniels2004} interactions.

Four important properties of CAT are apparent from its definition.
\begin{enumerate}
\item[1.] {\em Conservation of numerosity}. CAT conserves the number of elements in the sense of \eqref{mass conservation}.
\item[2.] {\em Arbitrarily large diameter}. CAT configurations can have multiple connected components, hence the number of elements does not limit the diameter of the configuration. Specifically, if the configuration $C$ has at least three elements, then, for any $r>0$, the diameter of $\C_t$ eventually exceeds $r$, $\P_C$--a.s. 
\item[3.] {\em Asymmetric behavior of diameter}. Since elements must be transported to the boundary of other elements, the diameter of a configuration can increase by at most $m$ with each step. In other words, CAT has the {\em at most linear growth} (a.m.l.g.) property
\begin{equation}\label{amlg prop}
\P_C (\diam (\C_{t+1}) \leq \diam (\C_t) + m\,\,\text{for all $t \geq 0$}) = 1,
\end{equation}
for every configuration $C$. 
In contrast, the diameter can decrease abruptly. For example, if $n = m+1$ and $C \subset_n \Z^d$ has a diameter of $r \geq n$, then the diameter will decrease by at least $r - n$ in the first step, $\P_{C}$--a.s.
\item[4.] {\em Invariance under translation}. Since its transition probabilities depend only on the distances between elements, CAT satisfies
\[
\P_C ( \C_1 = D ) = \P_{C+x} ( \C_1 = D + x)
\]
for every $C, D \in \cal S$ and $x \in \Z^d$. We associate to each configuration $C$ the equivalence class consisting of its translates:
\begin{equation}\label{hat equiv class}
\wt C = \big\{ D \subset \Z^d: C = D + x \,\, \text{for some $x \in \Z^d$} \big\}.
\end{equation}
\end{enumerate}

\subsection{Statement of the main results}\label{main results}

In what follows, we assume that $d \geq 3$ and $\beta > 2$. Our main result is that CAT has a critical numerosity with respect to the behavior of its diameter (Figure~\ref{simFigure}). This critical numerosity can be made arbitrarily large through the choice of $m$.

\begin{figure}
\includegraphics[width=0.65\linewidth]{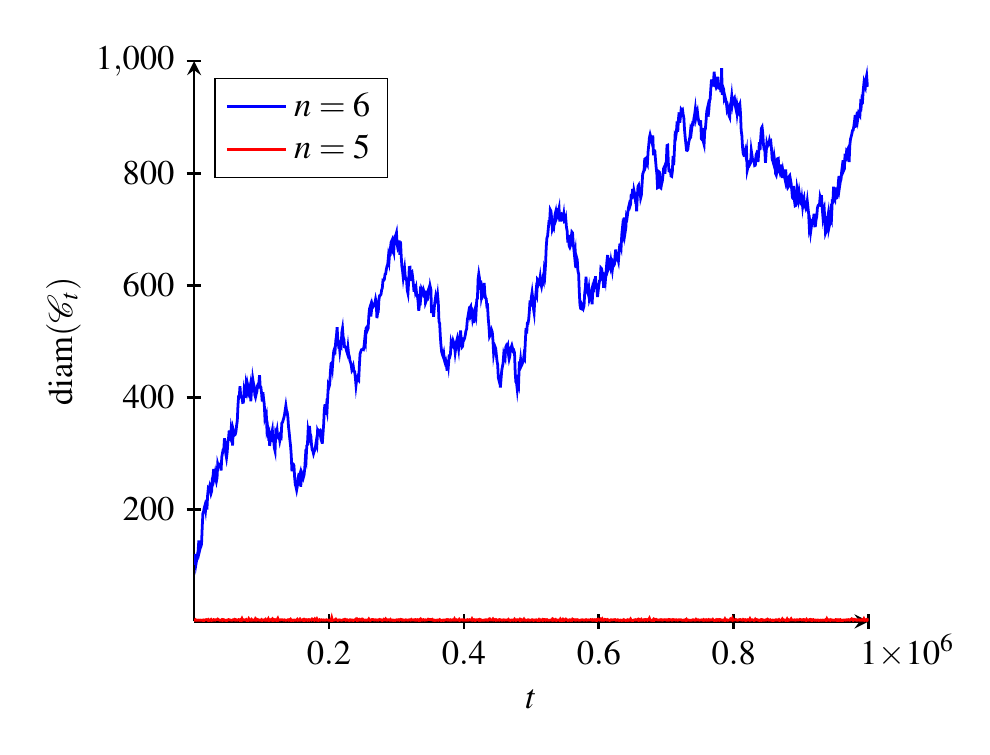}
\caption{Representative simulations of CAT with $(d,m,\beta) = (3,2,4)$ over $10^6$ steps, starting from $n$-element sets with diameters of $100$.}
\label{simFigure}
\end{figure}

\begin{theorem}[Critical numerosity]\label{phase transition}
CAT has a critical numerosity of $n_c = 2m+2$ with respect to the event $\{ \diam (\C_t) \to \infty\}$.
\end{theorem}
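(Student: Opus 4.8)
The plan is to establish the two halves of \eqref{nc cases} for $B=\{\diam(\C_t)\to\infty\}$ by separate arguments. Write $n:=|\C_0|$, which is conserved by \eqref{mass conservation}, and note that $B$ lies in the tail $\sigma$-field. Two elementary facts are used throughout: a connected $n$-element subset of $\Z^d$ has diameter at most $n-1$; and, because every transport substep attaches its element to the boundary of the set present, $\C_{t+1}$ has the same number of connected components as the set remaining after the $m$ activations at time $t$.

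\emph{Subcritical regime, $m+1\le n\le 2m+1$.} It suffices to show $\liminf_t\diam(\C_t)<\infty$ a.s. The heart of the matter is a uniform lower bound: there exist $R_0,c>0$, depending only on $(d,m,\beta)$ and $n$, such that from every $n$-element configuration the chain reaches diameter at most $R_0$ within $n-1$ steps with probability at least $c$. Indeed, a configuration with $k\ge 2$ connected components has, since $n\le 2m+1$, a smallest component of size at most $\lfloor n/2\rfloor\le m$; with probability bounded below --- the first activation lands in this component with probability at least $1/n$, and, its diameter being at most $m-1$, each subsequent activation stays in it with probability bounded below --- the chain activates this whole component, whereupon the remaining activations are forced into the other components and the $m$ transported elements attach to the boundary of the remnant without reviving the vacated region, reducing the number of components by at least one. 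Iterating at most $n-1$ times leaves a single component, of diameter at most $n-1=:R_0$. The strong Markov property then gives $\P_C(\exists\,t\ge T:\diam(\C_t)\le R_0)=1$ for every $T$, so $\diam(\C_t)\le R_0$ infinitely often a.s., and $\P_C(B)=0$.

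\emph{Supercritical regime, $n\ge 2m+2$.} I would argue $\diam(\C_t)\to\infty$ a.s.\ in three steps. (i) The chain reaches, at an a.s.-finite time, a configuration that is a disjoint union of at least two clusters, each of size in $\llb m+1,2m+1\rrb$ and with pairwise distances at least a fixed large $\rho_0$. Such a configuration must have at least two clusters because $n\ge 2m+2$; conversely, components of size at most $m$ are dissolved and re-accreted as in the subcritical analysis, a cluster of size at least $2m+2$ is split over a bounded number of steps --- each of probability bounded below --- into pieces of sizes in $\llb m+1,2m+1\rrb$ by activating interior elements to disconnect it and transporting the removed elements in separated bunches, and once two stable-size clusters are present they can be driven to distance $\rho_0$; the strong Markov property supplies the time. (ii) From such a configuration, the probability that a step's activations or transports jump between two clusters separated by distance $\delta$ is $O(\delta^{-\beta})$ --- here one uses that a stable cluster, having more than $m$ elements, is never exhausted by the $m$ activations, so the relevant normalizing sums stay bounded below. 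Conditioned on the absence of inter-cluster jumps the clusters evolve independently, each as CAT on $n'\in\llb m+1,2m+1\rrb$ elements; by the subcritical analysis each such internal chain is positive recurrent, so by the symmetries of CAT and the central limit theorem for additive functionals, the displacement of a cluster (say its center of mass) has bounded increments, zero mean, and nondegenerate, isotropic diffusivity. (iii) Hence the difference of two clusters' displacements is a zero-mean, nondegenerate, bounded-increment $\Z^d$-valued process, transient because $d\ge 3$; a Green's-function estimate shows it stays above $\rho_0/2$ forever, starting from $\rho_0$, with probability $1-O(\rho_0^{-(d-2)})$, and on that event the expected number of inter-cluster jumps is $\sum_t O\!\big(\max(\rho_0,\sqrt t\,)^{-\beta}\big)=O(\rho_0^{\,2-\beta})$, finite and small precisely because $\beta>2$. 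So, with $\rho_0$ fixed large, from a $\rho_0$-separated stable configuration there is probability at least $\tfrac{1}{2}$ that no inter-cluster jump ever occurs and all pairwise distances stay above $\rho_0/2$, on which event transience forces every pairwise distance, hence $\diam(\C_t)$, to infinity; since such configurations recur by (i), this probability boosts to one.

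\emph{Main obstacle.} The subcritical bound is a soft recurrence estimate resting on routine combinatorial moves. The real work is the supercritical regime, and within it the interplay of steps (ii) and (iii): the clusters themselves move, so their separation is not preserved automatically, and one must simultaneously bound the inter-cluster jump probabilities summably in time --- the role of $\beta>2$, via $\sum_t t^{-\beta/2}<\infty$ --- rule out via the subcritical result that a cluster disperses and bridges a gap, and upgrade ``finitely many exceptional steps'' to a genuinely decoupled, forever-separating regime. The transience input of step (iii) is the only place the hypothesis $d\ge 3$ is used, and it is what suggests the behavior changes at $d=2$.
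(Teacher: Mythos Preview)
Your overall architecture matches the paper's --- a uniform-in-diameter recurrence estimate for $n\le 2m+1$ and a cluster-separation argument for $n\ge 2m+2$, combined via Borel--Cantelli --- but two steps do not go through as written.

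\emph{Subcritical.} Your component-counting route is a genuine alternative to the paper's, but it has a gap. After activating the smallest component $S$ in full, you let the remaining $m-|S|$ activations fall ``into the other components'' without further control; these activations can hit cut vertices and \emph{split} those components, so the post-activation remnant may have more components than $C\setminus S$, and your preamble assertion that $\C_{t+1}$ has the \emph{same} component count as the remnant (it has at most as many, since transport can merge but never create components) does not save you. The fix is to argue that with probability bounded below, uniformly in $\diam(C)$, each remaining activation lands on a non-cut vertex of the nearest surviving component --- this is true, since that component has diameter at most $n$ and hence any specific one of its vertices is hit with probability at least $n^{-1}(1+n)^{-\beta}$ --- but you have not supplied it. The paper sidesteps component-counting entirely by tracking the number of elements inside a fixed small ball (Lemma~\ref{add to ball}): condition on the uniform first activation landing \emph{outside} the ball; then either all activation stays outside and everything is forced to be transported back near the ball, or, once activation enters the ball, it typically stays inside, with all consecutive distances bounded in terms of the ball's radius alone.

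\emph{Supercritical.} Two claims in your (ii)--(iii) are not right as stated. The center-of-mass increment of a subcritical cluster is not bounded --- the cluster diameter is positive recurrent but unbounded, and the one-step increment is $O(\diam)$ --- so you need exponential tails rather than boundedness; the paper obtains these via the return-time bound of Proposition~\ref{exp tail} and works at renewal times when every cluster is a line segment. More seriously, your summability estimate $\sum_t O(\max(\rho_0,\sqrt t)^{-\beta})$ presupposes separation $\Omega(\sqrt t)$ at time $t$, whereas the Green's-function event you condition on only guarantees separation $\ge\rho_0/2$, which yields a divergent sum. You correctly flag this circularity in your ``Main obstacle'' paragraph, but nothing in (iii) breaks it. The paper does so not with a one-shot Green's-function bound but with a doubling scheme (Proposition~\ref{doubling est} and the stopping times $\eta_l$ of \eqref{eta def}): the walk exceeds $e$ times its current norm before dipping below a logarithmically small fraction of it, with failure probabilities summable in the doubling index, and this is what furnishes an $\Omega(t^{1/2-\delta})$ lower bound on separation along the entire trajectory on a single positive-probability event --- exactly what is needed to make the inter-cluster error terms summable in the comparison with CopyCAT.
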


We can prove much more about the qualitative difference in the behaviors that CAT exhibits above and below $n_c$. This is the theme of the next three theorems. The first of these results states that, when $n < n_c$, the diameter of CAT exhibits {\em collapse} in the sense that, regardless of the diameter of the initial set, the diameter typically falls below a function of $n$ in $n$ steps.

\begin{theorem}[Diameter collapse]\label{drift}
If $n < 2m+2$, then there is $p > 0$ such that 
\begin{equation}\label{eq drift}
\P_{C} \left( \diam (\C_n) \leq 2n^2 \right) \geq p,
\end{equation}
for every $C \subset_n \Z^d$.
\end{theorem}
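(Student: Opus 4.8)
The plan is to show that a single step of CAT ``merges'' two connected components of the configuration (at scale $n$) with probability bounded below uniformly in $C$, and then to iterate until only one component remains. Call two elements of a configuration \emph{$n$-adjacent} if their Euclidean distance is at most $n$, and call the components of the resulting graph the \emph{$n$-clusters}. A single $n$-cluster on at most $n$ vertices has diameter less than $n^2$, since any two of its elements are joined by a simple path of at most $n-1$ edges, each of length at most $n$. Hence, if $\C_\tau$ is a single $n$-cluster for some $\tau\le n$, then the at-most-linear-growth property \eqref{amlg prop} gives $\diam(\C_n)\le \diam(\C_\tau)+(n-\tau)m<n^2+nm<2n^2$, using $m<n$. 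So it suffices to prove: there is $p_1=p_1(m,\beta,n)>0$ such that whenever a configuration has $k\ge 2$ $n$-clusters, the next configuration has at most $k-1$ $n$-clusters with probability at least $p_1$. Since $C$, and every configuration, has at most $n$ $n$-clusters, it follows that with probability at least $p_1^{\,n-1}$ the $n$-cluster count strictly decreases at every step until it first reaches $1$, necessarily by time $n-1$; this gives the theorem with $p=p_1^{\,n-1}$.

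For the per-step estimate, note first that the transport phase needs no control at all: each transported element is placed at distance $1$ from the current set, so it joins an $n$-cluster that is already present, and a full step therefore never produces more $n$-clusters than the post-activation set $\C_{0,m}$. It thus suffices to make $\C_{0,m}$ have at most $k-1$ $n$-clusters with probability $\ge p_1$, which the activation phase arranges using two estimates. (i) \emph{Moving to the nearest remaining element is cheap, however far it is}: if $x$ was just activated and $\delta$ is the distance from $x$ to the nearest remaining element, then, as $\beta>0$, each of the at most $n$ terms in the normalizing sum $\sum_z\|x-z\|^{-\beta}$ is at most $\delta^{-\beta}$, so that element is activated next with probability at least $1/n$. (ii) \emph{Moving to a prescribed element in the same $n$-cluster as $x$ is also cheap}: it lies within distance $n^2$ of $x$, while the normalizing sum is at most $n$ (every term is at most $1$, since distinct lattice points are at distance $\ge 1$), so it is activated next with probability at least $(n^2)^{-\beta}/n$. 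Since $n\le 2m+1$, the smallest $n$-cluster has at most $\lfloor n/k\rfloor\le m$ elements and can be activated in full within the budget of $m$ activations; using (i) to pass between clusters and (ii) to move within them, one activates exactly $m$ elements so that $\C_{0,m}$ has at most $k-1$ $n$-clusters---at least one $n$-cluster is removed entirely, and any cluster that is only partly removed is reduced by deleting leaves of a spanning tree of its $n$-adjacency graph, which keeps it connected. Each of the $m$ activations is of type (i) or (ii) except the first (uniform, probability $1/n$), so the prescribed sequence has probability at least $p_1:=n^{-m}\big((n^2)^{-\beta}/n\big)^{m-1}>0$.

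The main obstacle is exactly the design and analysis of this activation sequence---the per-step lemma. Activation is essentially \emph{local}: directly activating two elements in distinct, arbitrarily far-apart $n$-clusters has probability that cannot be bounded below uniformly, so one cannot simply ``choose a convenient set of $m$ elements to remove.'' Estimate (i) is the way around this, since it lets the activated set be steered into a union of entire small clusters together with a connectivity-preserving portion of one further cluster; but the robustness of this step is delicate. One must spend the $m$ activations so that at least one whole $n$-cluster disappears while no partially removed cluster is split into extra pieces---a subtlety, because the cluster one enters via a nearest-element move need not admit a convenient removal order---and this requires a careful accounting of cluster sizes (and, where necessary, repairs carried out in the transport phase within a single cluster's bounded footprint). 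The reduction of the first paragraph and the substep bounds (i)--(ii) are routine; the combinatorial claim that one CAT step can always be arranged to reduce the $n$-cluster count, with probability independent of $C$, is where the real work lies.
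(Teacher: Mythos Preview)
Your approach---reducing to a per-step lemma that the $n$-cluster count strictly drops with uniformly positive probability---is a genuine alternative to the paper's, which instead tracks the number of elements inside a growing sequence of balls (Lemma~\ref{add to ball} and Proposition~\ref{tau m}) and uses the trick of conditioning on the first activation landing \emph{outside} the ball. Your reduction in the first paragraph and the observation that transport never increases the $n$-cluster count are both correct.

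The gap is exactly where you flag it, and your proposed patch does not close it. Estimate (i) only controls the \emph{nearest} remaining element, so after wiping out the smallest cluster $A$ you may be forced to land on a cut vertex of the next cluster. Concretely, take $m=3$, $n=7$, $B=\{0,7e_1,14e_1,21e_1,28e_1\}$ and $A=\{14e_1+Re_2,\,14e_1+Re_2+e_3\}$ for $R>7$: after activating both elements of $A$ the nearest point of $B$ is its middle vertex $14e_1$, and $\C_{0,3}=B\setminus\{14e_1\}$ has two $7$-clusters---the same count as $C$. Your ``repair in the transport phase'' cannot fix this: the two halves are at distance $14$, and $m=3$ unit-step transports can shrink that by at most $3$, leaving $11>7$. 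In general, removing a cut vertex from an $n$-chain leaves an $n$-adjacency gap of up to $2n$, which $m<n$ distance-one transports cannot bridge. The right fix is not to repair but to strengthen (i): if $D$ is the $n$-cluster of the remaining set $R$ nearest to the just-activated point $x$, then \emph{every} $v\in D$ satisfies $\|x-v\|\le \dist(x,R)+\diam(D)<(1+n^2)\,\dist(x,R)$, whence $\mu_{R,x}(v)\ge (1+n^2)^{-\beta}/n$. This lets you choose $v$ to be a leaf of a spanning tree of $D$; your leaf-peeling scheme (exhaust the smallest cluster, then repeatedly peel leaves from whichever cluster is currently nearest, jumping when it empties) then goes through and yields a valid alternative proof.
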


This result implies that if $n < n_c$, then the diameter of $\C_t$ is at most a function of $n$, infinitely often, a.s. Hence, the limit infimum of diameter is a.s.--finite in this case. The key feature of Theorem~\ref{drift} is that the number of steps, diameter upper bound, and probability lower bound in \eqref{eq drift} do not depend on the diameter of $C$; their values are otherwise unimportant. Indeed, if $p$ were allowed to depend on $\diam (C)$, then simple bounds on the transition probabilities would suffice to prove \eqref{eq drift}, without the hypothesis on $n$.

If $n \geq n_c$, then $\C_t$ eventually reaches a configuration consisting of well separated ``clusters'' that have the right number of elements to persist, in a sense. Specifically, the clusters have enough elements (at least $m+1$) to avoid being wholly activated in a single step, but not enough elements (at most $2m+1$) to split into smaller clusters that can persist.

To make this precise, we define $\Stable_{r}$ for $r \geq 0$ to be the subset of $\cal S$ consisting of sets that have a partition $(V^j)_{j=1}^J$ such that
\begin{equation}\label{reach well sep conds}
\forall j \in \llbracket 1, J \rrbracket, \quad V^j \in \cal L, \quad |V^j| \in \llbracket m+1, 2m+1 \rrbracket \quad \text{and} \quad \dist (V^j, V^{\neq j}) \geq r,
\end{equation}
where $V^{\neq j} = \cup_{i \neq j} V^i$. We stipulate that each $V^j$ belongs to $\cal L$, the collection of line segments parallel to $e_1$, to facilitate our use of the following result.

\begin{theorem}[Cluster formation]\label{reach well sep}
There is $p > 0$ such that  
\begin{equation}\label{eq reach well sep}
\P_{C} (\C_{2n^2 r} \in \Stable_r) \geq p^r,
\end{equation}
for every $C \subset_n \Z^d$ and $r \in \Z_{\geq 2n}$.
\end{theorem}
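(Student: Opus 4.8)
\emph{Strategy.} I would prove the estimate by constructing, for each $C \subset_n \Z^d$ and $r \geq 2n$, a deterministic sequence of configurations $C = D_0, D_1, \dots, D_{2n^2 r}$ with $D_{2n^2 r} \in \Stable_r$ such that, for every $t$, the step $D_t \to D_{t+1}$ is a single CAT transition of probability at least $\rho$, where $\rho = \rho(n,m,d,\beta) > 0$ depends on none of $C$, $r$, or $\diam(C)$. Granting this, the Markov property gives
\begin{equation*}
\P_C(\C_{2n^2 r} \in \Stable_r) \;\geq\; \P_C\big(\C_t = D_t,\ 0 \leq t \leq 2n^2 r\big) \;\geq\; \rho^{2n^2 r} \;=\; \big(\rho^{2n^2}\big)^{r},
\end{equation*}
so the theorem holds with $p := \rho^{2n^2}$.

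\emph{The script.} It has three phases. In the first, of length $O(n)$, we bring $C$ into $\Stable_2$: since every connected component of an $n$-element set has diameter less than $n$, we straighten each component into a segment parallel to $e_1$, placed (there is room, as $d \geq 3$) away from the other components, by relocating $O(1)$ elements at a time; we cut each segment longer than $2m+1$ into segments of sizes in $\llb m+1,2m+1 \rrb$ by deleting a block and reattaching it at an end, opening a gap of at least $2$; and we merge each component of size at most $m$ into a nearest other component, re-cutting the result if it overflows $2m+1$. (That $n$ is a sum of integers in $\llb m+1,2m+1 \rrb$ whenever $n \geq m+1$ is elementary.) In the second phase, of length at most $\tfrac14 n^2 r$, we spread the clusters into $\Stable_r$: listing them by $e_1$-coordinate as $A_1,\dots,A_J$ (so $J \leq n/(m+1)$) and processing them from left to right, we walk $A_j$ leftward, one unit per step, until the consecutive $e_1$-gaps among $A_1,\dots,A_j$ are all at least $r+2m$; because $A_j$ then moves by at most $(J-j)(r+2m)$ regardless of $\diam(C)$, the total is at most $\tfrac12(r+2m)J^2 \leq \tfrac14 n^2 r$ steps, and the left-to-right order keeps the clusters disjoint throughout, with consecutive pairs ending (and staying) at distance at least $r$. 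The third phase burns the remaining steps by walking one cluster ever further from the rest, which stays in $\Stable_r$; since $r \geq 2n$, the first two phases fit inside $2n^2 r$.

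\emph{The crux.} The delicate point — and the main obstacle — is that every scripted step has probability at least $\rho$ with $\rho$ independent of $\diam(C)$; without this, merging far-separated components would incur a diameter-dependent cost. A CAT step is a chain of $2m$ substeps, and it is enough to bound each substep's conditional probability below by a function of $n,m,d,\beta$. In each activation substep we activate the not-yet-activated element nearest the previous one, which has the largest $\phi$-weight among the available elements and hence probability at least $1/n$. In each transport substep we deposit at a boundary point whose distance to the relevant reference element is within a bounded factor of the minimum such distance; since the boundary of the remaining set has at most $2dn$ points, this has probability at least a constant times $(dn)^{-1}$. The one nonlocal move is merging a component $A$ with $|A| \leq m$ into a nearest other component $T$: activate all of $A$, then the $m - |A|$ elements of $T$ nearest (in order) the last-activated element of $A$, then transport the $m$ elements onto an end of $T$. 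This works with diameter-independent probability because, once $A$ is exhausted, every remaining element is at distance at least $\dist(A,T)$ from the last-activated element of $A$ (which lies in $A$), so an element of $T$ attaining $\dist(A,T)$ has $\phi$-weight within a factor $n\big(1 + n/\dist(A,T)\big)^{\beta} \leq n(1+n)^{\beta}$ of the total — a bound with no reference to $\diam(C)$ — and, $A$ having been removed, the transports are forced near $T$ instead of back toward $A$. These estimates combine to a single $\rho$. The remaining work — checking that straightening, cutting, merging, and walking are realizable by such scripted steps without collisions, and confirming the step counts — is routine but requires care.
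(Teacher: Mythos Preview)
Your approach is genuinely different from the paper's, and the central insight is sound. The paper handles the diameter-agnostic step through what it calls ``Idea \#1'': rather than scripting a merge, it conditions on the event that the \emph{first} (uniformly random) activation lands in a small component, which has probability at least $1/n$ regardless of $\diam(C)$; given this, it shows the number of small-component elements drops with probability bounded in $n$ alone (Lemma~3.2), and iterates. After $n$ steps the configuration has no small components, and a combinatorial lemma (Lemma~6.1) then partitions it into pieces each with an ``open lane'' in the $e_1$ direction, which are treadmilled rightward by prescribed, staggered amounts. Your route instead scripts the merge explicitly and bounds its probability by comparing $\phi$-weights directly: once $A$ is exhausted, every surviving element sits at distance at least $\dist(A,T)$ from the last activated $a\in A$, while a target in $T$ sits at distance at most $\dist(A,T)+\diam(A)$, so the ratio is controlled by $(1+n/\dist(A,T))^\beta\le(1+n)^\beta$. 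This is correct and is a legitimate alternative to the conditioning trick; it buys you an entirely constructive proof, at the cost of more bookkeeping. The paper's version is slicker because it never has to name the target component or worry about the order of activation inside it.

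That said, Phase~2 as written does not parse. If $A_1,\dots,A_J$ are listed left to right and you process them left to right, walking $A_j$ \emph{leftward} shrinks the gap $A_{j-1}$--$A_j$ you just set, and the stopping rule ``until the gaps among $A_1,\dots,A_j$ are all at least $r+2m$'' tells $A_1$ not to move at all, contradicting your bound $(J-j)(r+2m)$. What you presumably intend is the fixed displacement $L_j=(J-j)(r+2m)$ applied to each $A_j$; then consecutive gaps grow by exactly $r+2m$, which works provided the clusters live on distinct $e_1$-lines so that the leftward walks never collide. You gesture at this in Phase~1 (``there is room, as $d\ge3$''), but you should make it explicit, and note that placing $J\le n$ segments on distinct lines costs $O(n^2)$ steps, not $O(n)$ --- still well within the $2n^2 r$ budget since $r\ge 2n$. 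The paper avoids the different-lines device entirely by building the ``open lane'' property into the partition itself. A smaller point: in the merge step, once inside $T$ you should activate along a connected path in $T$ (so each substep goes to a neighbour at distance~$1$), not ``the elements of $T$ nearest $a$ in order,'' which need not be consecutive neighbours of one another.
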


Theorem~\ref{reach well sep} states that it takes on the order of $r$ steps for CAT to reach a state consisting exclusively of $r$-separated clusters that can persist, with at least a probability depending on $r$. 
Like Theorem~\ref{drift}, the key feature of Theorem~\ref{reach well sep} is that the diameter of $C$ does not appear in \eqref{eq reach well sep}. If these clusters are sufficiently separated then, because they have the right number of elements to persist, they tend to move like $d$-dimensional random walks, hence they tend to grow in separation like $t^{1/2}$ over $t$ steps. As they grow increasingly separated, it becomes increasingly unlikely that the clusters exchange elements, favoring further separation growth, and so on. This mechanism underlies the growth of diameter when $n \geq n_c$. 

\begin{theorem}[Diameter growth]\label{growth} If $n \geq 2m+2$, then there is $\gamma > 0$ such that, for any $\delta \in (0,\frac12)$, there are $p>0$ and $s \in \Z_{\geq 0}$ for which
\begin{equation}\label{eq growth}
\P_{C} \left( \diam (\C_t) \geq \gamma (t-s)^{\frac12 - \delta} \,\,\text{for every $t \geq s$} \right) \geq p,
\end{equation}
for every $C \subset_n \Z^d$.
\end{theorem}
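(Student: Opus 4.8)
\emph{Sketch of proof.} The plan is to reach, via Theorem~\ref{reach well sep}, a configuration consisting of well-separated clusters of the ``persistent'' sizes in $\llbracket m+1, 2m+1 \rrbracket$, and then to show that, because $d \geq 3$, such clusters never interact and drift apart like independent transient random walks, so that their separations---hence $\diam (\C_t)$---stay above a power of time just below $t^{1/2}$. Since $(t-s)^{1/2-\delta}$ is nonincreasing in $\delta$ for $t-s \geq 1$ and vanishes at $t=s$, it suffices to prove \eqref{eq growth} for $\delta$ in a fixed interval $(0,\delta_*)$ with $\delta_* = \delta_*(\beta) \in (0,\tfrac12)$ small enough that $\beta(\tfrac12-\delta_*) > 1$; the resulting $\gamma$ then depends only on $d,m,\beta,n$, as required. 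Fix such a $\delta$ and choose a large radius $r = r(\delta) \geq 2n$. By Theorem~\ref{reach well sep}, with probability at least $p^r$ we have $\C_{s_0} \in \Stable_r$ at time $s_0 := 2n^2 r$; since $n \geq 2m+2 > 2m+1$, a set in $\Stable_r$ is not a single cluster, so $\C_{s_0}$ splits into $J \in \llbracket 2, n/(m+1) \rrbracket$ clusters $V^1,\dots,V^J \in \cal L$ of sizes in $\llbracket m+1,2m+1\rrbracket$, pairwise at distance $\geq r$. Condition on this event.

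Call a step at a time $t \geq s_0$ \emph{clean} if all $m$ activated and all $m$ transported elements belong to (the boundary of) a single current cluster; a non-clean step requires a transition between points at distance comparable to the current minimum inter-cluster separation $\rho_t$, which by the form of the transition probabilities and $\beta > 2$ has probability $\lesssim \rho_t^{-\beta}$. Along a run of clean steps each cluster keeps its number of elements (activation removes $m$, transport adds $m$), so clusters neither merge nor split, and the $J$ clusters evolve as \emph{independent} ``isolated-cluster'' Markov chains on translation classes of sets of their sizes, each advanced at the random times its cluster is activated. Using $\beta > 2$, the isolated-cluster chain admits a geometric Lyapunov function in the diameter, hence is geometrically ergodic: a cluster does not spread out, its per-step centroid displacement has uniformly bounded moments, and---because CAT is invariant under isometries of $\Z^d$, so the stationary law is invariant under the hyperoctahedral group---the stationary displacement has mean $0$, with a functional central limit theorem for the centroid. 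Thus for each pair $i \neq j$, along clean steps $c(V^i_t) - c(V^j_t)$ behaves like a mean-zero random walk in $\R^d$ with nondegenerate asymptotic covariance. Since $d \geq 3$ its Brownian limit is transient, and a Dvoretzky--Erd\H{o}s-type integral test---convergent precisely because $\beta(\tfrac12-\delta) > 1$, so $\sum_t t^{-\beta(1/2-\delta)} < \infty$---shows that, after a further deterministic wait $s_1 = s_1(\delta)$, $\dist(V^i_t, V^j_t) > 2\gamma (t-s_0)^{1/2-\delta}$ for all $t$, with probability tending to $1$ as $r \to \infty$.

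To remove the circularity---well-separatedness was used to rule out interactions but was itself conditional on there being none---argue by a stopping time. Let $\tau$ be the first $t > s_0$ at which a non-clean step occurs or $\min_{i \neq j}\dist(V^i_t, V^j_t)$ drops below $\max\{ r/2,\ 2\gamma(t-s_0)^{1/2-\delta} \}$. On $\{\tau < \infty\}$, either the separations stayed above this threshold until a non-clean step, an event of probability at most $\sum_{t > s_0}\max\{ r/2,\ 2\gamma(t-s_0)^{1/2-\delta} \}^{-\beta} \lesssim r^{-\beta} r^{1/(1/2-\delta)} + \sum_{t \geq 1} t^{-\beta(1/2-\delta)}$, which is $< \tfrac14$ once $r$ is large (both terms vanish as $r\to\infty$ because $\beta(\tfrac12-\delta)>1$); or the separations, evolving only by clean (random-walk-like) steps, dipped below the threshold, an event of probability $< \tfrac14$ by the invariance principle and Dvoretzky--Erd\H{o}s estimate for $r,s_1$ large. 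Hence $\P(\tau = \infty) \geq \tfrac12$ given $\C_{s_0} \in \Stable_r$, and on $\{\tau = \infty\}$ we have $\diam(\C_t) \geq \min_{i \neq j}\dist(V^i_t, V^j_t) \geq 2\gamma(t-s_0)^{1/2-\delta} \geq \gamma(t-s)^{1/2-\delta}$ for all $t \geq s := s_0 + s_1$. Multiplying by the probability $p^r$ of first reaching $\Stable_r$ gives \eqref{eq growth} with $p = \tfrac12 p^r$, uniformly over $C \subset_n \Z^d$.

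I expect the main obstacle to be the analysis of the isolated-cluster chain: proving it is (geometrically) recurrent on bounded-diameter configurations so that a cluster does not spread out and its centroid increments have good moments; pinning the stationary drift to $0$ via lattice symmetry; verifying nondegeneracy of the diffusivity; and assembling these into a Dvoretzky--Erd\H{o}s bound with constants uniform in $C$ and, crucially, with $\gamma$ independent of $\delta$. The bookkeeping of the random clock (each cluster is activated a positive fraction of the steps, so has made order $t$ internal steps by time $t$) and the simultaneous control of rare non-clean steps throughout the bootstrap are the remaining technical points.
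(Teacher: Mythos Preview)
Your overall strategy matches the paper's: reach $\Stable_r$ via Theorem~\ref{reach well sep}, then argue that well-separated clusters of sizes in $\llbracket m+1,2m+1\rrbracket$ drift apart like transient $d$-dimensional random walks. The reduction to $\delta<\tfrac12-\tfrac1\beta$ and the final monotonicity step are exactly what the paper does as well.

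The execution, however, differs and contains one genuine imprecision. You assert that along \emph{clean} steps ``the $J$ clusters evolve as independent isolated-cluster Markov chains.'' This is not literally true: even when all $2m$ substeps land in a single cluster $V^i$, each substep probability $\mu_{C_l,x_l}(x_{l+1})$ or $\mu_{\partial C_l,x_l}(x_{l+1})$ is normalized over the \emph{full} configuration, so it still depends on the other clusters. The discrepancy is only $O(\rho_t^{-\beta})$ per substep, but it has to be controlled simultaneously with the non-clean-step probability, and it must be summed over all time. The paper handles exactly this via an auxiliary process (``CopyCAT''), in which the clusters genuinely inhabit disjoint copies of $\Z^d$, together with a one-step ratio comparison (Lemma~\ref{rat est} and Proposition~\ref{one step approx}) showing that CAT dominates CopyCAT on trajectories in $\Growth_{a,b,\gamma,\delta}$ up to a factor $\prod_t\bigl(1-O(\sep(\C_t)^{-\beta}\log^\beta\sep(\C_t))\bigr)$, which is $1-o_a(1)$ precisely when $\beta(\tfrac12-\delta)>1$. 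Your stopping-time bootstrap would work, but only after you insert this ratio comparison; as written, the ``independent'' claim is a gap.

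A second, softer difference is how you extract the random-walk behavior. You propose geometric ergodicity of the isolated-cluster chain, then a CLT/invariance principle for the centroid, then Dvoretzky--Erd\H{o}s. The paper bypasses all of this with a renewal trick: it waits for the (a.s.\ finite, exponentially tight) times $\xi_l$ at which \emph{every} cluster is simultaneously a line segment in $\cal L$, and observes that the difference of least elements $M^i_{\xi_l}-M^j_{\xi_l}$ is then \emph{exactly} a symmetric, irreducible, aperiodic random walk on $\Z^d$ (translation-invariance gives symmetry for free). This reduces the transience and lower-envelope estimates to off-the-shelf bounds (Propositions~2.4.5 and~6.4.2 of Lawler--Limic), avoiding the need to prove ergodicity, zero drift, and nondegenerate covariance separately. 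Your route is viable, but the renewal device is substantially shorter and delivers uniform constants (over the finitely many cluster sizes) automatically.
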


\begin{figure}
\includegraphics[width=0.75\linewidth]{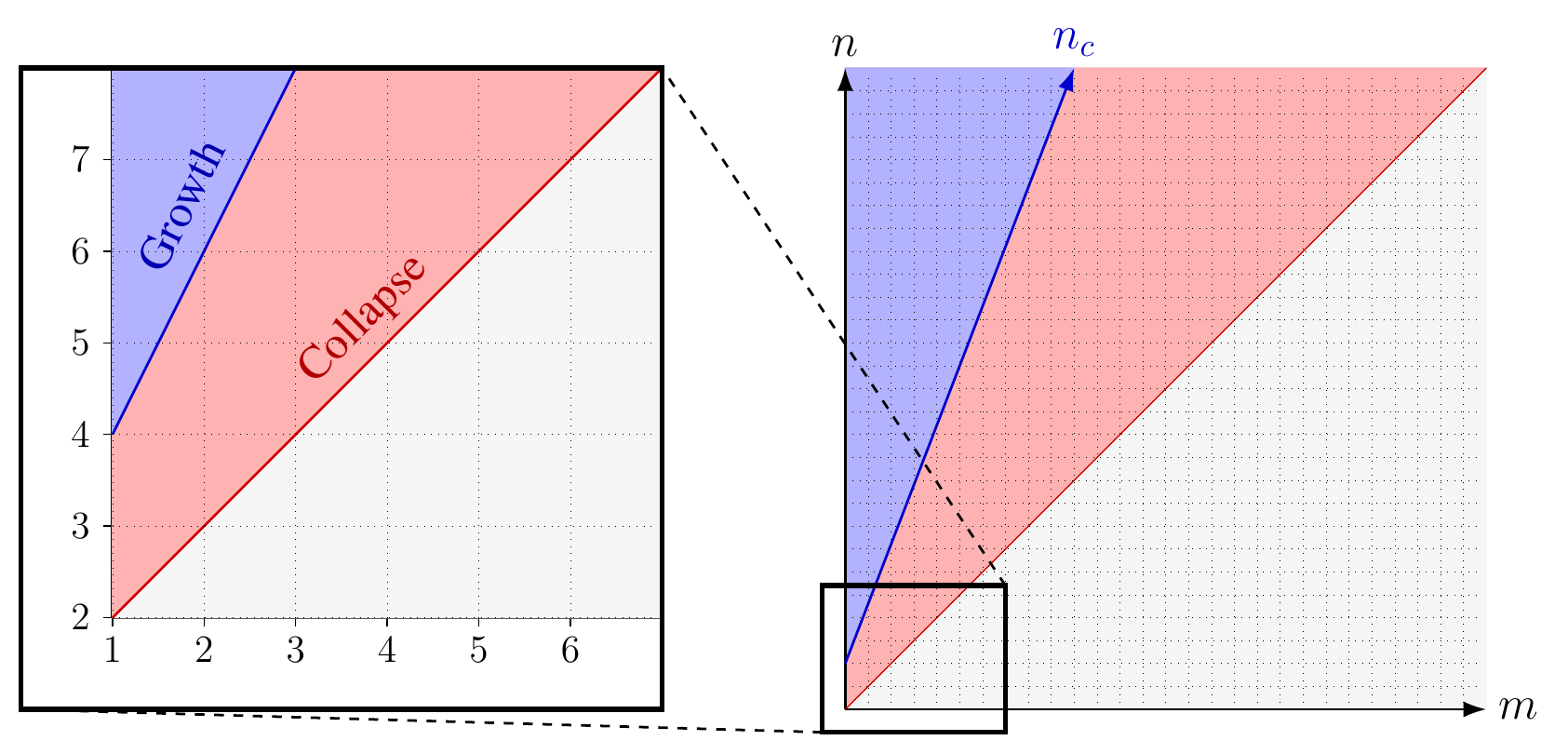}
\caption{The phase diagram of CAT in the $(m,n)$ grid includes regions of steady diameter growth (blue) and frequent diameter collapse (red), separated by critical numerosities (line labeled $n_c$). CAT is not defined for $n \leq m$ (gray).}
\label{phasediagram}
\end{figure}

Figure~\ref{phasediagram} summarizes Theorems~\ref{drift} and \ref{growth}, which imply Theorem~\ref{phase transition}.

\begin{proof}[Proof of Theorem~\ref{phase transition}]
Denote $B = \{\diam (\C_t) \to \infty\}$ and $n_c = 2m+2$. Theorem~\ref{drift} and the Borel--Cantelli lemma imply that, if $n < n_c$, then the limit infimum of $\diam(\C_t)$ is finite $\P_{C}$--a.s., hence $\P_{C} (B) = 0$ for every $C \subset_n \Z^d$. However, if $n \geq n_c$, then Theorem~\ref{growth} and the Borel--Cantelli lemma imply that $\P_{C} (B) = 1$ for every $C \subset_n \Z^d$.
\end{proof}

The last of our main results shows that the critical numerosity of CAT coincides with a transition from the recurrence of $\wt{\C}_t$ to its transience. Note that, for CAT to form a set $U$ as $\C_t$ for some $t \geq 1$, it is necessary for $U$ to have a {\em progressive boundary}, i.e., to contain an $m$-element subset $\{x_1,\dots,x_\kk\}$ that satisfies
\begin{equation}\label{seq boundary}
x_j \in \bdy \big( U \setminus \{x_j, \dots, x_\kk\} \big) \quad \text{for $j \in \llbracket 1, m \rrbracket$}.
\end{equation}
In particular, if $\kk=1$, then \eqref{seq boundary} is satisfied by any set $U$ which does not consist exclusively of isolated elements (i.e., elements with no neighbor in $U$). We will prove by induction on $m$ that it is also sufficient, i.e., CAT is irreducible on the collection of such sets.

\begin{definition}[Progressive boundary]
A set $U \in \cal S$ has a progressive boundary if there is a $m$-element subset $\{x_1, \dots, x_m\}$ of $U$ that satisfies \eqref{seq boundary}. We denote by $\Prog_{m,n}$ the collection of $n$-element subsets of $\cal S$ that have progressive boundaries. We denote the collection of equivalence classes of such sets, where each class consists of all translates of an $n$-element set that has a progressive boundary, by $\wtProg_{m,n}$ \eqref{hat equiv class}.
\end{definition}

\begin{theorem}[Transition from positive recurrence to transience]\label{rec to trans}
If $n < 2m+2$, then $\wt{\C}_t$ has a unique stationary distribution, supported on $\wtProg_{m,n}$, to which it converges from any $n$-element subset of $\Z^\dim$. However, if $\nn \geq 2 \kk+2$, then it is transient.
\end{theorem}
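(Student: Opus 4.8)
The plan is to handle the two regimes by separate arguments, reading off the supercritical case from Theorem~\ref{growth} and the subcritical case from Theorem~\ref{drift} together with the irreducibility of CAT on $\wtProg_{m,n}$. Throughout I would use that $\wt{\C}_t$ is a Markov chain, by translation invariance, and that $\diam$ is translation invariant, hence a function of $\wt{\C}_t$. For the transient case, suppose $n \geq 2m+2$. The proof of Theorem~\ref{phase transition} already records that then $\diam(\C_t)\to\infty$ holds $\P_C$-almost surely for every $C\subset_n\Z^\dim$; I would simply cite this (or re-derive it by noting that $\{\diam(\C_t)\to\infty\}$ is a tail event of the Markov chain and applying L\'evy's $0$--$1$ law to the uniform lower bound of Theorem~\ref{growth}). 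Since for each $R$ there are only finitely many translation classes of $n$-element subsets of $\Z^\dim$ with diameter at most $R$, the event $\{\diam(\C_t)\to\infty\}$ forces $\wt{\C}_t$ to visit every translation class only finitely often; as this event has probability one, $\wt{\C}_t$ is transient.

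Now suppose $n < 2m+2$. The first observation is that $\C_t\in\Prog_{m,n}$ for all $t\geq 1$ and every initial configuration: writing $x_{m+1},\dots,x_{2m}$ for the elements transported in a step, the set $\{x_{m+1},\dots,x_{2m}\}$ witnesses \eqref{seq boundary} for $\C_{t+1}$ with $x'_j:=x_{m+j}$, because $\C_{t+1}\setminus\{x_{m+j},\dots,x_{2m}\}$ equals the intermediate set obtained after $m+j-1$ substeps and $x_{m+j}$ lies on its boundary by the transport rule. Hence $\wt{\C}_t$ lives on $\wtProg_{m,n}$ from time $1$ onward, where it is irreducible (the induction on $m$ announced in the excerpt) and aperiodic: from a line segment $L_{1,n}$ there is positive probability to activate the $m$ rightmost elements in decreasing order and then transport $m$ elements so as to rebuild $L_{1,n}$, so $\wt{L}_{1,n}\in\wtProg_{m,n}$ carries a self-loop. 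It remains to establish positive recurrence.

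For this step I would fix the reference state $\wt{C}_*:=\wt{L}_{1,n}$, let $F:=\{D\subset_n\Z^\dim:\diam(D)\leq 2n^2\}$, which is finite modulo translation, and choose $K$ --- using irreducibility and the finiteness of $\wt{F}\cap\wtProg_{m,n}$ --- so that $q:=\inf\{\P_{\wt{D}}(\wt{\C}_s=\wt{C}_*\text{ for some }0\leq s\leq K):\wt{D}\in\wt{F}\cap\wtProg_{m,n}\}>0$, and put $N:=n+K$. Applying Theorem~\ref{drift} and the Markov property at time $jN$, then at time $jN+n$, shows that for every $C\subset_n\Z^\dim$ and $j\geq 0$, conditionally on $\F_{jN}$ one has $\wt{\C}_{jN+n}\in\wt{F}\cap\wtProg_{m,n}$ with probability at least $p$, and then $\wt{\C}_s=\wt{C}_*$ for some $s\in(jN,(j+1)N]$ with conditional probability at least $q$; so the number of the blocks $(jN,(j+1)N]$ preceding the first visit of $\wt{\C}$ to $\wt{C}_*$ is stochastically dominated by a geometric variable with success probability $pq$. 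Consequently the return time $\tau$ to $\wt{C}_*$ satisfies $\sup\{\E_C[\tau]:|C|=n\}\leq N/(pq)<\infty$; in particular $\wt{C}_*$ has finite expected return time, so $\wt{\C}_t$ is positive recurrent. Being irreducible, aperiodic, and positive recurrent, it then has a unique stationary distribution $\pi$, necessarily supported on the recurrent class $\wtProg_{m,n}$, and $\wt{\C}_t\to\pi$ in total variation from every state of $\wtProg_{m,n}$, hence --- after one step --- from every $n$-element subset of $\Z^\dim$.

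The main obstacle is the positive-recurrence step, and it hinges on the \emph{uniformity} in Theorem~\ref{drift}: the state space is infinite, so without a return bound for the finite set $\wt{F}$ that does not deteriorate as $\diam(\C_0)\to\infty$, one could not exclude the return time to $\wt{C}_*$ having infinite expectation (indeed, that is exactly what happens in the supercritical regime). The remainder is bookkeeping --- interleaving the ``collapse to $\wt{F}$'' and ``hop to $\wt{C}_*$'' phases inside blocks of fixed length and checking that the intermediate states lie in $\wtProg_{m,n}$ so that irreducibility applies --- which I do not expect to cause trouble.
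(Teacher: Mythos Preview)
Your argument is correct. The transience half matches the paper's Proposition~\ref{trans} essentially verbatim: both derive $\diam(\C_t)\to\infty$ a.s.\ from Theorem~\ref{growth} and conclude that each bounded-diameter translation class is visited only finitely often. Your observation that $\C_t\in\Prog_{m,n}$ for $t\geq1$, witnessed by the transported elements in order, and your aperiodicity check also agree with the paper.

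The positive-recurrence half is where you diverge. The paper invokes the Foster--Lyapunov theorem (Proposition~\ref{fl}) with $f=\diam$ and $g\equiv n$: Theorem~\ref{drift} together with a.m.l.g.\ gives $\E_U[\diam(\wt\C_n)]\leq\diam(U)-n$ once $\diam(U)$ exceeds a threshold, and positive recurrence follows in one line. You instead build a uniform minorization: in each block of length $N=n+K$ the chain collapses into the finite set $\wt F$ with probability $\geq p$ (Theorem~\ref{drift}) and from there hits the reference state $\wt C_*$ within $K$ steps with probability $\geq q$ (irreducibility on the finite set), so the return time to $\wt C_*$ has geometric tails. Both routes rest on the same essential input---the diameter-uniform bound in Theorem~\ref{drift}---and both are standard; the paper's is shorter because it outsources the drift-to-recurrence step to a cited criterion, while yours is self-contained and in fact yields a bit more (an explicit geometric tail, hence exponential moments for the return time, not just finite expectation).
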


Standard Markov chain theory characterizes the existence of a unique stationary distribution in terms of irreducibility, aperiodicity, and positive recurrence. We prove these components in separate propositions.

\begin{proposition}[Irreducibility]\label{irred}
$\wt \C_t$ is irreducible on $\wtProg_{m,n}$ for every $n > m$.
\end{proposition}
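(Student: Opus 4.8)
The plan is to show that, from any $U \in \Prog_{m,n}$, CAT can reach any other $V \in \Prog_{m,n}$ in finitely many steps with positive probability, and conversely; since translation-invariance lets us work with $\wt\C_t$ on $\wtProg_{m,n}$, it suffices to establish this for representatives. Because the transition probabilities are strictly positive for every legal activation/transport sequence (the weights $\phi(u,v)$ and $\|u-v\|^{-\beta}$ are always positive on the finitely many relevant pairs), the real content is purely combinatorial: I need to exhibit, for each ordered pair $(U,V)$, a \emph{finite sequence of CAT-legal moves} carrying $U$ to $V$. I would prove this by induction on $m$, as the excerpt hints. The base case $m=1$ is the classical ``remove a uniformly random element, add a uniformly random boundary element'' chain: here $\Prog_{1,n}$ is the set of $n$-element configurations that are not a union of isolated points, and irreducibility of this chain (up to translation) is standard — one can, e.g., first show every such configuration communicates with a fixed ``line segment'' configuration $L_{1,n}$ by repeatedly peeling off an extreme element and re-depositing it so as to grow a connected segment, being careful that at no intermediate step does the configuration become all-isolated (which is why the restriction to $\Prog_{1,n}$ is exactly right).

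For the inductive step, the strategy is to \emph{simulate} one step of the $m$-chain by a bounded number of steps of the $(m-1)$-chain plus one ``extra'' move, or more robustly, to reduce reachability for $m$ to reachability for $m-1$ by showing: (i) any $m$-activation-then-$m$-transport move can be decomposed so that its net effect is achievable through moves that at each stage keep us inside $\Prog$; and (ii) the canonical target — I would again aim for a standard configuration such as the line segment $L_{1,n}$, which lies in $\Prog_{m,n}$ since $n > m$ (one checks \eqref{seq boundary} is satisfied by the $m$ rightmost points). Concretely, I would show (a) from any $U \in \Prog_{m,n}$ one can reach $L_{1,n}$: use the progressive-boundary structure to identify $m$ ``safely removable'' points, activate them, then transport them one at a time onto the boundary so as to lengthen a growing segment, iterating until everything lies on the line; and (b) from $L_{1,n}$ one can reach any $V \in \Prog_{m,n}$: run (a) in reverse, which is legitimate because the progressive-boundary condition on $V$ is precisely what guarantees $V$ itself arises as the transport phase of some move (add the $m$ witnessing points $x_1,\dots,x_m$ in order $x_m,\dots,x_1$ to $V\setminus\{x_1,\dots,x_m\}$), and the ``core'' $V \setminus \{x_1,\dots,x_m\}$ has $n-m$ points, to which one can apply the inductive hypothesis after noting it (or a nearby configuration) lies in the appropriate $\Prog$ class. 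Aperiodicity, though not asked here, would follow from exhibiting a self-loop $\P_U(\C_1 = U) > 0$, which holds whenever some activated point can be immediately transported back to its own location — available because $\phi$ was defined in \eqref{d def} precisely to keep $x_m = x_{m+1}$ legal.

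The main obstacle I anticipate is controlling the \emph{intermediate} configurations during these move sequences: CAT only permits transport to the boundary of \emph{what currently remains}, so when I activate $m$ points the remaining $n-m$ points must still be ``rich enough'' (e.g., not all isolated, and ideally connected) for the subsequent transport substeps to have somewhere legal to go, and after transport I must land back in $\Prog_{m,n}$ rather than merely in $\cal S$. Managing this is exactly where the hypothesis $n > m$ and the progressive-boundary definition do the work, and the induction on $m$ is the tool that lets me reuse the $(m-1)$-case's reachability while tracking connectivity of the residual set. A secondary (routine) point is bookkeeping the translation quotient: all the above should be phrased so that the witnessing path in $\cal S$ descends to a path in $\wtProg_{m,n}$, which is immediate from property~4 (invariance under translation).
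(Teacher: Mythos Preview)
Your overall architecture matches the paper's: route everything through the line $L_{1,n}$ and prove the line-to-set direction by induction on $m$. The set-to-line direction is in fact easier than you suggest and needs no progressive-boundary structure: from any $U$, activate any $m$ elements other than the $e_1$-rightmost element $x$ and transport them to $x+e_1,\dots,x+me_1$, then iterate (this is Proposition~\ref{line wpp}).

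The gap is in your inductive step for line-to-set. You propose to strip all $m$ witnesses from $V\in\Prog_{m,n}$ and apply the $(m-1)$-hypothesis to the core $V' = V\setminus\{x_1,\dots,x_m\}$, but $V'$ need not lie in $\Prog_{m-1,n-m}$. Concretely, with $m=2$, $n=4$, and $V=\{0,e_1,3e_1,3e_1+e_2\}\in\Prog_{2,4}$, every valid witness pair takes one point from each of the two components, leaving two isolated points --- not in $\Prog_{1,2}$. Your hedge ``or a nearby configuration'' is exactly where the missing idea lives. The paper sidesteps this by removing only the \emph{last} witness $y_{m+1}$: then $y_1,\dots,y_m$ automatically certify $V\setminus\{y_{m+1}\}\in\Prog_{m,n-1}$, so both indices drop by one together. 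The price is that the inductive hypothesis now concerns the $m$-chain on $(n-1)$-point sets while one is running the $(m+1)$-chain on $n$-point sets; the paper bridges this with an isolate-and-park maneuver (treadmill a pair of elements far from the rest, leave one parked at a remote site $a$) combined with a no-interference lemma asserting that a far-away parked element does not obstruct the remaining moves (Lemma~\ref{no interference lemma}). Your ``run (a) in reverse'' does not work as written because activation and transport are not symmetric operations, and the $m=1$ base case is less routine than ``standard'' suggests --- the paper devotes a separate induction on $n$ to it (Proposition~\ref{base of set to line induction}), for the same underlying reason: removing a single point from $V\in\Prog_{1,n}$ can again yield an all-isolated set.
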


We devote Section~\ref{sec irred} to the proof of irreducibility. The key step shows that $\C_t$ can form any configuration from a line, using induction on the number of elements. The fact that $\wt \C_t$ is aperiodic is simpler; we state and prove it here.

\begin{proposition}[Aperiodicity]\label{aper}
$\wt \C_t$ is aperiodic on $\wtProg_{m,n}$ for every $n > m$.
\end{proposition}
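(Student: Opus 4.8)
The plan is to exhibit, for some state in $\wtProg_{m,n}$, a self-loop of positive probability; since $\wt\C_t$ is irreducible on $\wtProg_{m,n}$ by Proposition~\ref{irred}, a single aperiodic state forces the whole chain to be aperiodic. Concretely, I would take a line segment $L = L_{1,n} \in \cal L$ (or rather its translation class $\wt L$), which lies in $\Prog_{m,n}$ because successively removing the $m$ right-most elements keeps each removed element on the boundary of what remains, and argue that $\P_L(\C_1 = L) > 0$.

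To see this, note the following explicit event has positive probability: activate the $m$ right-most elements of $L$ in right-to-left order (the first is chosen uniformly, probability $\tfrac1n$; each subsequent activated element is a nearest neighbor of the previous one, so its weight $\phi(x_l,x_{l+1}) = 1$ is comparable to all other weights and the conditional probability is bounded below), and then transport them back, again in the order that rebuilds $L$: the first transported element $x_{m+1}$ can be taken equal to $x_m$ — this is exactly the coincidence $x_l = x_{l+1}$ at $l = m$ that the definition \eqref{d def} handles with $\phi(u,u) = 1 > 0$ — and each later transported element is placed adjacent to the previous one on the boundary of the current set, again with conditional probability bounded below. The product of these finitely many positive conditional probabilities is a positive lower bound for $\P_L(\C_1 = L)$, so $L$ (equivalently $\wt L$) has a self-loop, hence is aperiodic, hence $\wt\C_t$ is aperiodic on all of $\wtProg_{m,n}$.

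The only mild subtlety — and the single place one must be slightly careful rather than merely routine — is verifying that the intermediate sets appearing along this particular substep sequence are legal, i.e.\ that at each transport substep $l \ge m$ the target is genuinely in $\bdy\C_{t,l}$, and that the chosen activation order keeps each activated element on the boundary of the remaining set so that the intermediate configurations are the ones we intend. For the segment $L_{1,n}$ with right-to-left activation and left-to-right reconstruction this is immediate from the one-dimensional geometry (the relevant intermediate sets are just shorter segments, possibly together with the already-replaced elements), so no real obstacle arises; the proof is essentially a one-line observation once the right self-loop is identified.
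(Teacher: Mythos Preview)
Your proposal is correct and takes essentially the same approach as the paper: exhibit the line segment $L_{1,n}$ as a state with a positive-probability self-loop (activate $m$ endpoint elements and transport them back), then invoke irreducibility (Proposition~\ref{irred}) to propagate aperiodicity to all of $\wtProg_{m,n}$. The paper's version is terser---it simply writes $\P_{C}(\wt\C_1 = \wt\C_0) \geq \P_{C}(\AA_0 = \T_0 = y) > 0$ for $y = \{je_1\}_{j \le m}$ (the left-most rather than right-most elements)---but the content is identical to yours.
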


\begin{proof}
Suppose that $C = \{je_1: j \in \llbracket 1,n\rrbracket \}$. Note that $C \in \Prog_{m,n}$ because \eqref{seq boundary} is satisfied by setting $y_j = je_1$ for $j \in \llbracket 1, m \rrbracket$. It is aperiodic because $\P_{C} \big( \wt \C_1 = \wt \C_0 \, \big) \geq \P_{C} \big( \AA_0 = \T_0 = y \big) > 0.$ 
Aperiodicity is a class property, so Proposition~\ref{irred} implies that CAT is aperiodic on $\wtProg_{m,n}$.
\end{proof}

\begin{proposition}[Positive recurrence]\label{pos rec}
If $n < 2m+2$, then $\wt \C_t$ is positive recurrent on $\wtProg_{m,n}$.
\end{proposition}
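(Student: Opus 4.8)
The plan is to verify a Foster–Lyapunov drift condition with the diameter as Lyapunov function, applied to the $n$-step chain, and then transfer positive recurrence back to $\wt\C_t$. Let $P$ be the transition kernel of $\wt\C_t$ on $\wtProg_{m,n}$ and set $Q := P^n$, so that $(\wt\C_{nt})_{t \ge 0}$ is the Markov chain with kernel $Q$; since $\wt\C_t$ is irreducible (Proposition~\ref{irred}) and aperiodic (Proposition~\ref{aper}), this sampled chain is again irreducible on $\wtProg_{m,n}$. Because the diameter is translation invariant, $V(\wt C) := \diam(C)$ is a well-defined function from $\wtProg_{m,n}$ to $\R_{\ge 0}$ whose sublevel sets $\{\wt C : V(\wt C) \le K\}$ are finite for every $K$ (up to translation there are only finitely many $n$-element subsets of $\Z^d$ of diameter at most $K$). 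So it suffices to produce a finite set $F \subset \wtProg_{m,n}$ with $QV \le V - 1$ off $F$ and $\sup_F QV < \infty$: a (multi-step) Foster criterion then makes the $Q$-chain positive recurrent, and since any $Q$-stationary probability measure $\pi$ gives the $P$-stationary probability measure $\frac1n \sum_{j=0}^{n-1} \pi P^j$, the irreducible chain $\wt\C_t$ is itself positive recurrent.

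The drift estimate is where the hypothesis $n < 2m+2$ enters, through Theorem~\ref{drift}. Fix $C \subset_n \Z^d$. On the event $\{\diam(\C_n) \le 2n^2\}$, which has $\P_C$-probability at least the constant $p > 0$ of \eqref{eq drift}, bound $\diam(\C_n)$ by $2n^2$; on its complement, bound $\diam(\C_n)$ by $\diam(C) + nm$, using the a.m.l.g.\ property \eqref{amlg prop} (which holds $\P_C$-a.s.). Taking expectations,
\begin{equation*}
\E_C\big[\diam(\C_n)\big] \;\le\; 2n^2 + (1-p)\big(\diam(C) + nm\big),
\end{equation*}
so that $QV(\wt C) - V(\wt C) = \E_C[\diam(\C_n)] - \diam(C) \le 2n^2 + (1-p)nm - p\,\diam(C)$, which is at most $-1$ as soon as $\diam(C) \ge K := \big(2n^2 + (1-p)nm + 1\big)/p$. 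With $F := \{\wt C \in \wtProg_{m,n} : V(\wt C) < K\}$, which is finite, we get $QV \le V - 1$ off $F$, while on $F$ the a.m.l.g.\ bound gives $QV \le K + nm < \infty$. This verifies the criterion.

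The whole argument rests on Theorem~\ref{drift}, and that is where I expect the real work to be: the crucial point is that the collapse probability $p$ in \eqref{eq drift} is bounded below independently of $\diam(C)$, which is exactly what turns ``$\diam(\C_n)$ can fall below $2n^2$'' into a genuine negative drift once $\diam(C)$ is large. Everything else is routine bookkeeping---translation-invariance and finite sublevel sets of $V$, irreducibility of the sampled chain, and the passage from the $Q$-chain to the $P$-chain---though one should be a little careful to invoke a multi-step version of Foster's criterion (equivalently, to check directly that the expected return time of $\wt\C_t$ to a fixed state is at most $n$ times that of the sampled chain, hence finite).
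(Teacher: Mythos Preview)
Your proof is correct and follows essentially the same approach as the paper: use $\diam$ as a Lyapunov function, combine Theorem~\ref{drift} with the a.m.l.g.\ property \eqref{amlg prop} to get a negative drift after $n$ steps once the diameter exceeds a threshold, and conclude via a Foster--Lyapunov criterion. The only cosmetic difference is that the paper invokes the multi-step Foster criterion (Proposition~\ref{fl}, with $g \equiv n$) directly, whereas you pass to the $n$-sampled chain and transfer back---you yourself note these are equivalent.
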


Proposition~\ref{pos rec} follows from Theorem~\ref{drift} because it implies that the diameter of $\wt \C_t$ is a Lyapunov function.

\begin{proposition}[Foster--Lyapunov theorem; Theorem~2.2.4 of \cite{fayollle1995}]\label{fl}
An irreducible Markov chain $(X_t)_{t \geq 0}$ on a countable state space $\cal V$ is positive recurrent if and only if there are 
\[
f : \cal V \to \R_{>0}, \quad g : \cal V \to \Z_{\geq 1}, \quad \eps > 0, \quad \text{and} \quad \text{finite \, $\cal U \subset \cal V$}
\]
such that 
\begin{align}
\forall U \in \cal U, \quad \E_U \big[ f \big( X_{g(U)} \big) \big] &< \infty, \quad \text{and}\label{fl2}\\ 
\forall U \notin \cal U, \quad \E_U \big[ f \big( X_{g(U)} \big) \big] &\leq f(U) - \eps g (U). \label{fl1}
\end{align}
\end{proposition}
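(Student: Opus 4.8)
The plan is to prove the two implications of the equivalence separately; the sufficiency direction (drift condition $\Rightarrow$ positive recurrence) carries essentially all the weight.

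For \emph{necessity}, suppose $(X_t)$ is positive recurrent, fix any $v \in \cal V$, and take $\cal U = \{v\}$, $g \equiv 1$, $\eps = 1$, and $f := h+1$ where $h(w) := \E_w\big[\inf\{t \geq 0 : X_t = v\}\big]$. Then $f > 0$ and $f(v) = 1$, and positive recurrence of an irreducible chain is exactly what guarantees $h(w) < \infty$ for every $w$ and $\E_v[\tau_v] < \infty$ for the return time $\tau_v := \inf\{t \geq 1 : X_t = v\}$. A single first-step conditioning gives $\E_w[h(X_1)] = h(w) - 1$ for $w \neq v$, hence $\E_w[f(X_1)] = f(w) - \eps$, which is \eqref{fl1}; and $\E_v[f(X_1)] = 1 + \E_v[h(X_1)] = \E_v[\tau_v] < \infty$, which is \eqref{fl2}. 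So this direction reduces to standard facts about positive recurrent chains.

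For \emph{sufficiency}, given $f, g, \eps, \cal U$, the central device is to sample the chain along its own ``drift clock'': put $\sigma_0 = 0$, $\sigma_{k+1} = \sigma_k + g(X_{\sigma_k})$, and $Y_k := X_{\sigma_k}$. By the strong Markov property $(Y_k)$ is a time-homogeneous Markov chain with kernel $Q(U, \cdot) = \P_U(X_{g(U)} \in \cdot)$, and \eqref{fl1} becomes a genuine one-step supermartingale estimate for $Y$: with $T := \inf\{k \geq 0 : Y_k \in \cal U\}$, the process $f(Y_{k \wedge T}) + \eps \sum_{j=0}^{(k\wedge T)-1} g(Y_j)$ is a nonnegative supermartingale under $\P_U$. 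Taking expectations, discarding the nonnegative $f$-term, and letting $k \to \infty$ by monotone convergence yields $\eps\, \E_U[\sigma_T] \leq f(U) < \infty$ for every $U \notin \cal U$; since $\sigma_T \geq 1$ is a (real) time at which $X$ visits $\cal U$, this shows $X$ reaches $\cal U$ in integrable time from every state outside $\cal U$, and one further application of \eqref{fl2} and the strong Markov property bounds $\E_U[\tau_{\cal U}] \leq g(U) + \eps^{-1}\E_U[f(X_{g(U)})] < \infty$ for $U \in \cal U$, where $\tau_{\cal U} := \inf\{t \geq 1 : X_t \in \cal U\}$. Because $\cal U$ is finite, $C := \max_{U \in \cal U} \E_U[\tau_{\cal U}] < \infty$.

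Finally, I would invoke the classical passage from ``finite set with uniformly bounded mean return time'' to positive recurrence: observing $X$ at its successive visits to $\cal U$ gives an irreducible chain on the finite set $\cal U$, which therefore has a stationary distribution $\nu$, and the excursion measure $\pi(w) := \sum_{U \in \cal U} \nu(U)\, \E_U\big[\sum_{t=0}^{\tau_{\cal U}-1} \1[X_t = w]\big]$ is then stationary for $X$ with total mass $\sum_{U} \nu(U)\, \E_U[\tau_{\cal U}] \leq C < \infty$; an irreducible chain with a finite stationary measure is positive recurrent. The main obstacle is the bookkeeping in the sufficiency step — checking that $(Y_k)$ is Markov, that the optional-stopping and monotone-convergence manipulations remain legitimate even when $g$ is unbounded, and that $\pi$ is genuinely $X$-stationary (the standard ``cycle lemma'' computation, which I would present compactly or cite). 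Everything else is routine.
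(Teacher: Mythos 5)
Your proposal is correct, but there is nothing in the paper to compare it against: the paper does not prove this proposition at all---it imports it as Theorem~2.2.4 of \cite{fayollle1995} and only uses it as a black box in the proof of Proposition~\ref{pos rec}. What you have written is a sound, self-contained rendition of the standard Foster--Lyapunov argument, in essentially the same spirit as the cited source: necessity via first-step analysis applied to $f = 1 + \E_{\cdot}[\text{hitting time of a fixed state}]$ (using the standard fact that irreducibility plus positive recurrence makes these expectations finite from every state); sufficiency via the sampled chain $Y_k = X_{\sigma_k}$, for which \eqref{fl1} yields the nonnegative supermartingale $f(Y_{k\wedge T}) + \eps\sum_{j<k\wedge T} g(Y_j)$, hence $\eps\,\E_U[\sigma_T] \leq f(U)$ by monotone convergence (nonnegativity of $f$ and $g$ disposes of the integrability worries you flag, even for unbounded $g$), then \eqref{fl2} and the Markov property to get uniformly bounded mean return times to the finite set $\cal U$, and finally the trace-chain/excursion-measure construction to produce a finite stationary measure, which for an irreducible chain forces positive recurrence. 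The only steps you leave implicit---finiteness of $\E_w[\tau_v]$ for all $w$ in the necessity direction, irreducibility of the trace chain on $\cal U$, and the cycle-lemma verification that $\pi$ is stationary---are indeed standard and routine to fill in, so I see no gap.
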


\begin{proof}[Proof of Proposition~\ref{pos rec}]
Consider the Markov chain defined by $X_t = \wt \C_t$ on the state space $\cal V = \wtProg_{m,n}$ with $n<2m+2$, and the choices
\[
f (V) = \diam (V), \quad g(V) \equiv n, \quad \eps = 1, \quad \text{and} \quad \cal U = \{V \in \cal V: f(V) \leq b\},
\]
for a yet unspecified $b > 0$. By Proposition~\ref{irred}, $(X_t)_{t \geq 0}$ is irreducible on $\cal V$, which is countable. Hence, by Proposition~\ref{fl}, $\wt \C_t$ is positive recurrent on $\wtProg_{m,n}$ if
\begin{align*}
\forall U: \diam (U) \leq b, \quad \E_U \big[ \diam (\wt \C_n ) \big] &< \infty, \quad \text{and}\\ 
\forall U: \diam (U) > b, \quad \E_U \big[ \diam (\wt \C_n) \big] &\leq \diam (U) - n.
\end{align*}
The first condition holds because $\P_U (\diam (\wt \C_n) \leq \diam (U) + nm) = 1$ by \eqref{amlg prop}. Concerning the second condition, note that Theorem~\ref{drift} applies because $n < 2m+2$ and, together with \eqref{amlg prop}, implies that 
\[
\E_U \big[ \diam (\wt \C_n) \big] \leq (1-p) (\diam (U) + n) + p (2n^2),
\]
for $p > 0$ depending on $n$ only. Consequently, the second condition holds with $b = (2/p-1)n + 2n^2$.
\end{proof}

\begin{proposition}[Transience]\label{trans}
If $n \geq 2m+2$, then $\wt \C_t$ is transient on $\wtProg_{m,n}$.
\end{proposition}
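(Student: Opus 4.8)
The plan is to derive transience directly from the diameter growth estimate of Theorem~\ref{growth}, via the standard dichotomy for irreducible Markov chains on countable state spaces: such a chain is transient if and only if some (equivalently, every) state is revisited infinitely often with probability strictly less than $1$. Since $\wt{\C}_t$ is irreducible on $\wtProg_{m,n}$ by Proposition~\ref{irred}, it therefore suffices to produce one state from which, with positive probability, the chain visits that state only finitely often.

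First I would fix an arbitrary $V \in \wtProg_{m,n}$ together with a representative $C \subset_n \Z^d$ satisfying $\wt{C} = V$ (which exists since $n > m$), and apply Theorem~\ref{growth} with, say, $\delta = \frac14$. This produces $\gamma, p > 0$ and $s \in \Z_{\geq 0}$ such that the event
\[
E := \big\{ \diam(\C_t) \geq \gamma(t-s)^{1/4} \text{ for every } t \geq s \big\}
\]
has $\P_C(E) \geq p$. On $E$ the diameter tends to infinity, so there is an a.s.\ finite time $T \geq s$ with $\diam(\C_t) > \diam(C) = \diam(V)$ for all $t \geq T$; since the diameter is invariant under translation, this gives $\wt{\C}_t \neq V$ for every $t \geq T$. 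Hence $E$ is contained in the event $\{\wt{\C}_t = V \text{ for only finitely many } t\}$.

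It follows that $\P_V(\wt{\C}_t = V \text{ infinitely often}) \leq 1 - \P_C(E) \leq 1 - p < 1$, and the Markov chain dichotomy quoted above, applied to the irreducible chain $\wt{\C}_t$ on $\wtProg_{m,n}$, yields transience. I expect no real obstacle here: the substantive content is entirely contained in Theorem~\ref{growth}, and this argument merely repackages it. If one prefers to avoid invoking the dichotomy, the same conclusion follows by contradiction: a recurrent irreducible chain visits $V$ infinitely often $\P_V$-almost surely, which would force $\liminf_t \diam(\wt{\C}_t) \leq \diam(V) < \infty$ almost surely, contradicting the fact that $\P_V(\diam(\wt{\C}_t) \to \infty) \geq p > 0$ established just above.
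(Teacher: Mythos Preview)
Your proposal is correct and follows essentially the same approach as the paper: both derive transience directly from Theorem~\ref{growth}. The only cosmetic difference is that the paper upgrades the positive-probability diameter growth to an almost-sure statement via Borel--Cantelli (so every fixed state is visited only finitely often $\P_C$-a.s.), whereas you instead invoke irreducibility and the recurrence/transience dichotomy to pass from ``return probability $<1$'' to transience---these are interchangeable standard endgames.
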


\begin{proof}
By Theorem~\ref{growth} and the Borel--Cantelli lemma, if $C$ has $n \geq 2m+2$ elements, then $\wt \C_t$ makes only finitely many visits to sets in $\wtProg_{m,n}$ with diameter less than $r > 0$, $\P_{C}$--a.s., for every $r$, which implies that $\wt \C_t$ is transient.
\end{proof}

Theorem~\ref{rec to trans} combines Propositions~\ref{irred} through \ref{trans}.

\begin{proof}[Proof of Theorem~\ref{rec to trans}]
By standard Markov chain theory, Propositions~\ref{irred} through \ref{pos rec} imply the existence of a unique stationary distribution for $\wt \C_t$, supported on $\wtProg_{m,n}$, to which it converges from any configuration when $\num < 2m+2$. Propositions~\ref{irred} and \ref{trans} imply that $\wt \C_t$ is transient when $\num \geq 2\kk+2$.
\end{proof}

Theorem~\ref{rec to trans} reflects a high--level strategy for proving the existence of a critical numerosity---namely, to exhibit a Markov chain that transitions from recurrence to transience, as the numerosity of the initial state increases. The Markov chain dichotomy (e.g., \cite[Theorem 1.2.1]{fayollle1995}) explains the virtue of this strategy. 
It states that, if a Markov chain $\X$ is irreducible on a countable set $\cal T$, then either every state in $\cal T$ is recurrent a.s., or every state is transient a.s. This implies that an unbounded function $f: \cal T \to \R_{\geq 0}$ with finite sublevel sets $\{\{T \in \cal T: f(T) \leq i\}\}_{i \in \N}$ satisfies
\begin{equation}\label{dichotomy imp}
\liminf_{t \geq 0} f(\X_t) \stackrel{\text{a.s.}}{=}
\begin{cases}
\inf_{T \in \cal T} f(T) < \infty & \text{$\X$ is recurrent},\\
\infty & \text{$\X$ is transient.}
\end{cases}
\end{equation}
In particular, the Markov chain dichotomy implies that
\begin{equation*}
\forall T \in \cal T, \quad \P_T \left( f(\X_t) \to \infty \right) \in \{0,1\}.
\end{equation*}
Now, suppose that $\X$ has irreducible components $\{\cal T_n \}_{n \geq 2}$, where every element of $\cal T_n$ is an $n$-element set. For simplicity, assume that $\cal T = \cup_{n \geq 2} \cal T_n$. If there is $n_c \in \Z_{\geq 2}$ such that $\X$ is recurrent on $\cal T_n$ when $n < n_c$ and $\X$ is transient on $\cal T_n$ when $n \geq n_c$, then \eqref{dichotomy imp} implies that $\{f(\X_t) \to \infty\}$ has a critical numerosity of $n_c$.

This observation is relevant to Theorem~\ref{phase transition} because $\diam$ is an unbounded function with finite sublevel sets on $\wt{\cal S} = \cup_{n > m} \{\wt S: S \subset_n \Z^d\}$, i.e., configurations viewed up to translation. According to Proposition~\ref{irred}, the irreducible components of $\C$ are $\{\wtProg_{m,n}\}_{n > m}$, where each $\wtProg_{m,n}$ is a subset of $\wt{\cal S}_n = \{\wt S: S \subset_n \Z^d\}$ and the other elements of $\wt{\cal S}_n$ are inessential. By \eqref{dichotomy imp}, if $n > m$, then
\begin{equation*}
\forall \wt S \in \wt{\cal S}_n, \quad \P_{\wt S} \big( \diam (\wt{\C}_t) \to \infty \big) = 
\begin{cases}
0 & \text{$\wt{\C}_t$ is recurrent on $\wtProg_{m,n}$},\\ 
1 & \text{$\wt{\C}_t$ is transient on $\wtProg_{m,n}$}.
\end{cases}
\end{equation*}
The same is true of $\P_S (\diam (\C_t) \to \infty)$ because CAT is invariant under translation and $\diam (\wt S) = \diam (S)$ for every configuration $S$. Consequently, Theorem~\ref{phase transition} follows from Theorem~\ref{rec to trans}.

\subsection{Organization}
Section~\ref{prelims} facilitates a discussion of two key proof ideas in Section~\ref{sec proof ideas}, by gathering preliminary estimates of the CAT transition probabilities. In Section~\ref{sec diameter collapse}, we use the first idea from Section~\ref{sec proof ideas} to quickly prove Theorem~\ref{drift}. Section~\ref{sec diameter growth} begins with a proof of Theorem~\ref{growth}, using the second idea from Section~\ref{sec proof ideas} and assuming a key proposition. The rest of Section~\ref{sec proof ideas} is devoted to a proof of this proposition, which realizes the heuristic that well separated clusters that can persist  move like random walks. Then, Section~\ref{sec cluster formation} proves Theorem~\ref{reach well sep} in two stages, the first of which is another application of the first idea from Section~\ref{sec proof ideas}. The last section, Section~\ref{sec irred}, proves that CAT is irreducible on $\wtProg_{m,n}$, which completes the proof of Theorem~\ref{rec to trans}.

\subsection{Acknowledgements}
I am grateful to Milind Hegde, Joseph Slote, and Bin Yu for encouraging discussions; Sarah Cannon, Joshua Daymude, Brian Daniels, Stephanie Forrest, Dana Randall, Sidney Redner, and Andr{\'e}a Richa for valuable feedback; and Judit Z{\'a}dor and Noah Egan for suggesting the fourth and sixth examples in Table~\ref{exp summary}.

\section{Preliminaries}\label{prelims}

This section collects basic estimates of transition probabilities that facilitate our discussion of the ideas underlying the proofs of Theorems~\ref{drift}--\ref{growth} in Section~\ref{sec proof ideas}. These estimates largely follow from the a.m.l.g.\ of diameter \eqref{amlg prop}, which controls the distance between elements and, through it, the CAT transition probabilities. 

\subsection{A formula for the CAT transition probabilities}\label{tp formula}

Denote by $\AA_{t,j}$ and $\T_{t,j}$ the $j$\textsuperscript{th} elements activated and transported at time $t$. We use $\AA_t$ and $\T_t$ to denote $(\AA_{t,j})_{j \leq m}$ and $(\T_{t,j})_{j \leq m}$ or $\{\AA_{t,j}\}_{j \leq m}$ and $\{\T_{t,j}\}_{j \leq m}$, depending on the context. Additionally, we use $\cal X_t$ to denote the concatenation of $\AA_t$ and $\T_t$, $(\AA_{t,1},\dots,\AA_{t,m},\T_{t,1},\dots,\T_{t,m})$. 
In these terms, 
\begin{equation*}
\C_{t,l} = 
\begin{cases}
\C_t \setminus \{\AA_{t,j}\}_{j\leq l} & l \leq m,\\ 
(\C_t \setminus \AA_t) \cup \{\T_{t,j}\}_{j\leq l-m} & l > m.
\end{cases}
\end{equation*}
In particular, $\C_{t+1} = (\C_t \setminus \AA_t) \cup \T_t$.

We can express the CAT transition probabilities in terms of a family of probability measures on $\Z^d$, indexed by finite, nonempty $U \subset \Z^d$ and $u \in \Z^d$:
\begin{equation}\label{mu def}
\mu_{U,u} (v) = \frac{\phi (u,v)}{\sum_{w \in U} \phi (u,w)}\1_U (v).
\end{equation}
In other words, $\mu_{U,u}$ is the probability measure on $U$ that is proportional to $\phi (u,\cdot)$. 
For ease of notation, we further define $\mu_{U,\infty} (v) = |U|^{-1} \1_U (v)$. In terms of these measures, the components of the transition probability are 
\begin{align*}
\P \left( \AA_t = x \mid \C_t \right) &= \prod_{j=1}^{m} \mu_{ ( \, \C_t \setminus \{x_i\}_{i<j} \, ), \, x_{j-1}} (x_{j}) \quad \text{and}\\ 
\P \left( \T_t = y \mid \C_t, \AA_t\right) &= \prod_{j=1}^m \mu_{\bdy ( \, (\C_t \setminus \AA_t) \, \cup \, \{y_i\}_{i<j} \, ), \, y_{j-1}} (y_j),
\end{align*}
for $x, y \in (\Z^d)^{m}$ and where $x_0 = \infty$ and $y_0 = \AA_{t,m}$. 
The CAT transition probability is
\begin{equation*}
\P (\C_{t+1} = D \mid \C_t = C) = \sum_{(x,y): \, C \to D} \P \left( \AA_t = x, \T_t = y \mid \C_t \right),
\end{equation*}
for configurations $C, D$, where ${C \to D}$ indicates that the sum ranges over $x,y \in (\Z^d)^m$ such that
\[
D = (C \setminus \{x_i\}_{i\leq m}) \cup \{y_i\}_{i\leq m}.
\]

\subsection{Preliminary lemmas}

We now prove three basic estimates of the CAT transition probabilities, using a.m.l.g.\ \eqref{amlg prop} and the definitions of $\phi (u,v)$ \eqref{d def} and $\mu_{U,u} (v)$ \eqref{mu def}.

\begin{lemma}\label{poss implies typ}
Let $C_0, E \subset_n \Z^d$, $x \in (\Z^d)^{2m}$, and $t \geq 1$, and define $\{C_l\}_{1 \leq l \leq 2m}$ according to
\begin{equation}\label{cl def}
C_l = 
\begin{cases}
C_0 \setminus \{x_j\}_{j \leq l} & l \leq m,\\ 
\big( C_0 \setminus \{x_j\}_{j \leq m} \big) \cup \{x_j\}_{j = m+1}^l & l > m.
\end{cases}
\end{equation}
There are $a, b > 0$ such that, if the following probabilities are positive, then they in fact satisfy
\begin{align*}
\P_{C_0} (\C_{0,l+1} = C_{l+1} \mid \C_{0,l} = C_l) &\geq a \|x_{l+1} - x_l\|^{-b},\\ 
\P_{C_0} (\C_1 = C_{2m}) &\geq a \diam (\{x_j\}_{1 \leq j \leq 2m})^{-b}, \quad \text{and}\\ 
\P_{C_0} (\C_t = E) &\geq a^t (t \, \diam (C_0))^{-bt}.
\end{align*}
\end{lemma}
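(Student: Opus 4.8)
The plan is to reduce all three estimates to a single lower bound on an individual substep probability and then chain these together, with the at-most-linear-growth property \eqref{amlg prop} supplying the only structural input (and only for the third estimate). The elementary observation is that, by \S\ref{tp formula}, every substep probability has the form $\phi(u,v)/\sum_{w\in U}\phi(u,w)$, or $1/|U|$ for the very first substep, where $\phi\le 1$ since $\beta>0$, and where $U$ is either an intermediate configuration, so $|U|\le n$, or the boundary of one, so $|U|\le 2dn$ (each of its at most $n$ elements has at most $2d$ neighbors). Hence every substep probability is at least $\phi(u,v)/(2dn)$. In an activation substep the two linked elements are distinct, because elements are removed one at a time, so $\phi(u,v)=\|u-v\|^{-\beta}$; in a transport substep $\phi(u,v)=\max\{\|u-v\|,1\}^{-\beta}\ge\|u-v\|^{-\beta}$ unless $u=v$, which is possible only at the activation-to-transport junction, where $\phi=1$. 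This already gives the first displayed bound with $a=(2dn)^{-1}$ and $b=\beta$ (reading the right-hand side as $a\max\{\|x_{l+1}-x_l\|,1\}^{-b}$ in the degenerate coincident case).

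For the one-step bound, the hypothesis is that the substep probabilities in the first line are positive, equivalently that the tuple $x$ is feasible; then $\P_{C_0}(\C_1=C_{2m})\ge\P_{C_0}(\cal X_0=x)$, which by \S\ref{tp formula} is the product of the $2m$ substep probabilities. Bounding each factor by the first line, using $\|x_{l+1}-x_l\|\le\diam(\{x_j\}_{1\le j\le 2m})$ for $1\le l\le 2m-1$ together with the uniform initial factor $1/n$, yields $\P_{C_0}(\C_1=C_{2m})\ge(2dn)^{-2m}\,\diam(\{x_j\}_{1\le j\le 2m})^{-\beta(2m-1)}$, again with the $\max\{\cdot,1\}$ convention.

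For the $t$-step bound, positivity of $\P_{C_0}(\C_t=E)$ yields, by reachability, a configuration path $C_0=D_0,D_1,\dots,D_t=E$ with $\P(\C_{s+1}=D_{s+1}\mid\C_s=D_s)>0$ for each $s$; by conservation of numerosity each $D_s$ has $n$ elements, and now a.m.l.g.\ \eqref{amlg prop} gives $\diam(D_s)\le\diam(C_0)+tm$ for all $s\le t$. For each step, positivity produces a feasible substep-tuple $x^{(s)}$ realizing $D_s\to D_{s+1}$, whose entries are activated elements of $D_s$ and transported elements of $D_{s+1}$, hence lie in $D_s\cup D_{s+1}$; since $D_s$ and $D_{s+1}$ share their $\ge n-m\ge 1$ unactivated elements, $\diam(\{x^{(s)}_j\})\le\diam(D_s\cup D_{s+1})\le 2(\diam(C_0)+tm)\le 2(m+1)\,t\,\diam(C_0)$, using $t\ge 1$ and $\diam(C_0)\ge 1$ (the latter as $n\ge 2$). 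Applying the one-step bound to each of the $t$ transitions and multiplying (Markov property) gives $\P_{C_0}(\C_t=E)\ge\big((2dn)^{-2m}(2(m+1))^{-\beta(2m-1)}\big)^{t}\,(t\,\diam(C_0))^{-\beta(2m-1)t}$. Taking $a$ smaller than each of the three constants obtained above and $b=\beta(2m-1)$, which dominates $\beta$ so the first two bounds remain valid since the relevant distances are at least $1$ there, produces a single pair $(a,b)$ serving all three lines.

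I do not expect a genuine obstacle: the lemma is bookkeeping around the two inputs $\phi\le 1$ (with transport denominators controlled by $|\bdy U|\le 2dn$) and a.m.l.g. The only points needing care are translating ``the transition is possible'' into a concrete feasible substep-tuple, resp.\ configuration-path, at each scale; tracking the $t$ and $t\,\diam(C_0)$ factors through the iteration of a.m.l.g.; and the harmless degenerate case of coincident consecutive elements at the activation-to-transport junction.
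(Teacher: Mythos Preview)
Your proposal is correct and follows essentially the same approach as the paper: bound each substep probability below by $(2dn)^{-1}\phi(x_l,x_{l+1})$ using $|U|\le 2dn$ and $\phi\le 1$, multiply over substeps for the one-step bound, and then over steps for the $t$-step bound with a.m.l.g.\ controlling all distances by $O(t\,\diam(C_0))$. Your bookkeeping is slightly sharper (exponent $\beta(2m-1)$ rather than the paper's $2m\beta$) and you are a bit more careful about the degenerate coincident case at the activation-to-transport junction, but the argument is the same.
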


\begin{proof}
We consider each bound in turn. First, by the definition of the CAT dynamics, 
\begin{equation}\label{poss0}
\P_{C_0} (\C_{0,l+1} = C_{l+1} \mid \C_{0,l} = C_l) = 
\begin{cases}
\mu_{C_l,x_l}(x_{l+1}) & l \leq m\\ 
\mu_{\bdy C_l, x_l} (x_{l+1}) & l > m.
\end{cases}
\end{equation}
By \eqref{d def} and \eqref{mu def}, if $U \subset \Z^d$ is finite and nonempty, and if $u, v \in \Z^d$, then either $\mu_{U,u} (v) = 0$ or $\mu_{U,u} (v) \geq |U|^{-1} \| u - v \|^{-\beta}$. Hence, if the conditional probability in \eqref{poss0} is positive, then it satisfies
\begin{equation}\label{poss1}
\P_{C_0} (\C_{0,l+1} = C_{l+1} \mid \C_{0,l} = C_l) \geq (2dn)^{-1} \|x_{l+1} - x_l\|^{-\beta},
\end{equation}
because $|C_l| \leq n$ and $|\bdy C_l| \leq 2dn$.

Second, assume that $\P_{C_0} (\C_1 = C_{2m}) > 0$, in which case there must be $x \in (\Z^d)^{2m}$ and $\{C_l\}_{1 \leq l \leq 2m}$ that satisfy \eqref{cl def} and $\P_{C_0} (\C_{0,1} = C_1,\dots, \C_{0,2m} = C_{2m}) > 0$. We have 
\begin{equation}\label{pre poss2}
\P_{C_0} (\C_1 = C_{2m}) \geq \prod_{1 \leq l \leq 2m-1} \P_{C_0} (\C_{0,l+1} = C_l \mid \C_{0,l} = C_l) \geq (2dn)^{-2m} \prod_{1 \leq l \leq 2m-1} \| x_{l+1} - x_l \|^{-\beta}.
\end{equation}
The Markov property implies the first inequality, as well as the fact that $\P_{C_0} (\C_{0,l+1} = C_{l+1} \mid \C_{0,l} = C_l) > 0$ for each $l \in \llbracket 1, 2m-1 \rrbracket$. Hence, the second inequality follows from \eqref{poss1}. Bounding above $\|x_{l+1} - x_l\|$ by $\diam (\{x_j\}_{1 \leq j \leq 2m})$, we find that
\begin{equation}\label{poss2}
\P_{C_0} (\C_1 = C_{2m}) \geq (2dn)^{-2m} \diam (\{x_j\}_{1 \leq j \leq 2m})^{-2m\beta}.
\end{equation}

Third, assume that $\P_{C_0} (\C_t = C') > 0$, in which case there is a sequence of configurations $D_0, \dots, D_t$ with $D_0 = C_0$ and $D_t = E$, and for which $\P_{D_0} (\C_1 = D_1, \dots, \C_t = D_t) > 0$. Consider $s \in \llbracket 1, t \rrbracket$. Note that, if $y \in (\Z^d)^{2m}$ and $\{D_{s,l}\}_{1 \leq l \leq 2m}$ satisfy \eqref{cl def} in the place of $x$ and $\{C_l\}_{1 \leq l \leq 2m}$, and if $\P_{D_{s-1}} (\C_{0,1} = D_{s,1}, \dots, \C_{0,2m} = D_{s,2m}) > 0$ then the a.m.l.g.\ property \eqref{amlg prop} implies that 
\[
\|y_{l+1} - y_l\| \leq \diam (C_0) + tm \leq 2mt \diam (C_0).
\]
Hence, by \eqref{pre poss2}, 
\[
\P_{D_{s-1}} (\C_1 = D_s) \geq (2dn (2m)^\beta)^{-2m} \big( t \, \diam (C_0) \big)^{-2m\beta}.
\]
By applying the Markov property to each such $s$, we conclude that
\begin{equation}\label{poss3}
\P_{C_0} (\C_t = E) \geq (2dn (2m)^\beta )^{-2m} \big( t \, \diam (C_0) \big)^{-2m\beta t}.
\end{equation}
The three claimed bounds follow from \eqref{poss1}, \eqref{poss2}, and \eqref{poss3}, with $a = (2dn (2m)^\beta)^{-2m}$ and $b = 2m \beta$.
\end{proof}

The probability measure $\mu_{A,u}$ ``picks'' $v \in A$ in proportion to $\phi (u,v)$. If $v$ in fact belongs to $B \subset A$, then $\mu_{B,u} (v) \geq \mu_{A,u} (v)$, since there are fewer elements in $B$. The following result further shows that $\mu_{A,u} (v)$ is at least a fraction of $\mu_{B,u} (v)$, which is closer to one when the diameter of $B \cup \{u\}$ is smaller relative to the distance from $u$ to $A \setminus B$. The proof uses simple bounds on $\phi$.

\begin{lemma}[Ratio estimate]\label{rat est}
Let $B \subseteq A \subset_n \Z^d$, $u \in A^c$, and $v \in B$. Then,
\begin{equation}\label{simple rat bd}
\frac{\mu_{A,u} (v)}{\mu_{B,u} (v)} \geq 1 - n \frac{\diam(B \cup \{u\})^\beta}{\dist(u,A\setminus B)^\beta}.
\end{equation}
\end{lemma}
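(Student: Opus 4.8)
The plan is to unwind the definitions of $\mu_{A,u}$ and $\mu_{B,u}$ from \eqref{mu def} and reduce \eqref{simple rat bd} to a comparison of normalizing sums. Since $v\in B\subseteq A$ and $u\notin A$, we have $\phi(u,v)>0$ in both measures, and
\[
\frac{\mu_{A,u}(v)}{\mu_{B,u}(v)}=\frac{\phi(u,v)/\sum_{w\in A}\phi(u,w)}{\phi(u,v)/\sum_{w\in B}\phi(u,w)}=\frac{\sum_{w\in B}\phi(u,w)}{\sum_{w\in A}\phi(u,w)}.
\]
So it suffices to show that $\sum_{w\in A\setminus B}\phi(u,w)$ is at most an $n\,\diam(B\cup\{u\})^\beta/\dist(u,A\setminus B)^\beta$ fraction of $\sum_{w\in B}\phi(u,w)$, since then the ratio of sums is at least $1$ minus that fraction.

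The next step is to bound the two pieces using the crude inequalities $\phi(u,w)=\max\{\|u-w\|,1\}^{-\beta}\le \|u-w\|^{-\beta}$ from above and $\max\{\|u-w\|,1\}^{-\beta}\ge \|u-w\|^{-\beta}$ — more usefully, for the denominator $B$-sum I want a lower bound, and since each summand is $\ge$ (something), I'll fix any $v\in B$ as a reference point: $\sum_{w\in B}\phi(u,w)\ge \phi(u,v)\ge \|u-v\|^{-\beta}\ge \diam(B\cup\{u\})^{-\beta}$, because $\|u-v\|\le \diam(B\cup\{u\})$ (note $\beta>0$, so $\|u-v\|^{-\beta}\ge\diam^{-\beta}$ provided $\|u-v\|\ge 1$, which holds as $u\notin A\supseteq B\ni v$). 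For the numerator, each $w\in A\setminus B$ satisfies $\|u-w\|\ge \dist(u,A\setminus B)\ge 1$, hence $\phi(u,w)\le \dist(u,A\setminus B)^{-\beta}$, and there are at most $|A\setminus B|\le n$ such terms, giving $\sum_{w\in A\setminus B}\phi(u,w)\le n\,\dist(u,A\setminus B)^{-\beta}$.

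Combining, $\sum_{w\in A\setminus B}\phi(u,w)\big/\sum_{w\in B}\phi(u,w)\le n\,\dist(u,A\setminus B)^{-\beta}\cdot \diam(B\cup\{u\})^{\beta}$, and therefore
\[
\frac{\mu_{A,u}(v)}{\mu_{B,u}(v)}=\frac{\sum_{w\in B}\phi(u,w)}{\sum_{w\in A}\phi(u,w)}=1-\frac{\sum_{w\in A\setminus B}\phi(u,w)}{\sum_{w\in A}\phi(u,w)}\ge 1-\frac{\sum_{w\in A\setminus B}\phi(u,w)}{\sum_{w\in B}\phi(u,w)}\ge 1-n\frac{\diam(B\cup\{u\})^\beta}{\dist(u,A\setminus B)^\beta},
\]
as claimed. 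There is essentially no obstacle here; the only points requiring a moment's care are (i) handling the $\max\{\cdot,1\}$ in $\phi$, which is harmless because $u\notin A$ forces all relevant distances to be at least $1$ so that $\phi(u,\cdot)$ agrees with $\|u-\cdot\|^{-\beta}$ on $A$; and (ii) the direction of the inequality when raising distances to the power $-\beta$ with $\beta>0$, which is why the $\diam$ term appears in the numerator and the $\dist$ term in the denominator. If one wanted to avoid even assuming $\beta>0$ one could keep $\phi$ throughout rather than passing to $\|\cdot\|^{-\beta}$, but the statement as used in the paper has $\beta>2$, so this is not a concern.
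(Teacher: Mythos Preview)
Your proof is correct and essentially identical to the paper's. The only cosmetic difference is that the paper phrases the key step as $\frac{1}{1+r}\ge 1-r$ with $r=\sum_{A\setminus B}\phi/\sum_B\phi$, whereas you write $\frac{\sum_B}{\sum_A}=1-\frac{\sum_{A\setminus B}}{\sum_A}\ge 1-\frac{\sum_{A\setminus B}}{\sum_B}$; these are the same inequality, and the subsequent bounds on numerator and denominator match the paper's exactly.
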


\begin{proof}
By definition \eqref{mu def},
\[
\mu_{A,u}(v) = \frac{\phi (u,v)}{\sum_{z \in B} \phi (u,z) + \sum_{z \in A \setminus B} \phi (u,z)} \quad \text{and} \quad \mu_{B,u} (v) = \frac{\phi (u,v)}{\sum_{z \in B} \phi (u,z)}.
\]
Since $\frac{1}{1+r} \geq 1 - r$ when $r \geq 0$,
\[
\frac{\mu_{A,u} (v)}{\mu_{B,u} (v)} \geq 1 - \frac{\sum_{z \in A \setminus B} \phi (u,z)}{\sum_{z \in B} \phi (u,z)}.
\]
By assumption, $x \notin A$, so $\phi (u,z) = \| u - z \|^{-\beta}$ for every $z \in A$. Hence, 
\[
\sum_{z \in A\setminus B} \phi(u,z) \leq n \dist (x,A\setminus B)^{-\beta} \quad \text{and} \quad \sum_{z \in B} \phi (u,z) \geq \diam (B \cup \{u\})^{-\beta}.
\]
The claimed bound \eqref{simple rat bd} follows.
\end{proof} 

The probability that activation and transport occur according to a particular sequence $x \in (\Z^d)^{2m}$ is a product of $|C_0|^{-1}$ (to account for the initial activation at $x_1$) and probability measures of the form $\mu_{C_l,x_l} (x_{l+1})$ and $\mu_{\bdy C_l, x_l} (x_{l+1})$ \eqref{poss0}. Lemma~\ref{rat est} suggests that if we add a set $D$ to $C_l$, then the latter components change little, so long as $D$ is relatively far from $C_l$ and $x_{l+1}$ belongs to $C_l$ or $\bdy C_l$. This is the content of the next result; the proof is a simple application of Lemma~\ref{rat est}.

\begin{lemma}\label{lem less simple rat bd}
Let $C_0 \in \cal S$, let $D$ be a finite subset of $\Z^d$ such that $\dist (C_0, D) \geq 2m$, and let $x \in (\Z^d)^{2m}$ satisfy $\P_{C_0} (\cal X_0 = x) > 0$. There is $c>0$ such that 
\begin{equation}\label{less simple rat bd}
\frac{\P_{C_0 \cup D} (\cal X_0 = x)}{\P_{C_0} (\cal X_0 = x)} \geq \frac{|C_0|}{|C_0 \cup D|} \left(1 - c |C_0 \cup D| \frac{\diam (C_0)^\beta}{\dist (C_0,D)^\beta}\right)^{2m}.
\end{equation}
\end{lemma}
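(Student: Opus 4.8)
The plan is to expand both $\P_{C_0\cup D}(\cal X_0 = x)$ and $\P_{C_0}(\cal X_0 = x)$ with the product formula of Section~\ref{tp formula} and to compare the two expansions factor by factor with Lemma~\ref{rat est}. Write $\AA_0 = \{x_1,\dots,x_m\}$, $y_j := x_{m+j}$ for $j \in \llb 1,m\rrb$, and $E_j := (C_0\setminus\AA_0)\cup\{y_i\}_{i<j}$. The hypothesis $\P_{C_0}(\cal X_0 = x) > 0$ forces $x_1,\dots,x_m$ to be distinct elements of $C_0$ and $y_j \in \bdy E_j$ for each $j$, and gives
\[
\P_{C_0}(\cal X_0 = x) = |C_0|^{-1}\prod_{j=2}^{m}\mu_{C_0\setminus\{x_i\}_{i<j},\,x_{j-1}}(x_j)\,\prod_{j=1}^{m}\mu_{\bdy E_j,\,y_{j-1}}(y_j),
\]
with $y_0 := x_m$; the expansion of $\P_{C_0\cup D}(\cal X_0 = x)$ is identical with $C_0$ replaced by $C_0\cup D$ throughout, so that $E_j$ becomes $E_j\cup D$ (using $D\cap\AA_0 = \emptyset$, which holds since $\dist(C_0,D)\geq 2m\geq 1$). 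The leading factors contribute the ratio $|C_0|/|C_0\cup D|$, leaving $2m-1$ factors to bound below.

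Next I would extract two geometric consequences of the separation $\dist(C_0,D)\ge 2m$. An induction on $i$ using $y_i\in\bdy E_i$ shows $\dist(y_i,C_0)\le i\le m$, so every transported point lies within distance $m$ of $C_0$; hence $\dist(E_j,D)\ge m+1\ge 2$, which yields $\bdy E_j\subseteq\bdy(E_j\cup D)$ and, in particular (since $y_j\in\bdy E_j$), $\P_{C_0\cup D}(\cal X_0 = x) > 0$, so the ratio is well defined and equals the product of the $2m-1$ pairwise ratios of factors. Moreover, the ``new'' boundary points in $\bdy(E_j\cup D)\setminus\bdy E_j$ lie in $\bdy D$, hence at distance $\ge\tfrac12\dist(C_0,D)$ from every anchor $y_{j-1}$ (which lies within $m\le\tfrac12\dist(C_0,D)$ of $C_0$), while $\bdy E_j$ together with all activated points lies within distance $m$ of $C_0$ and so has diameter at most $\diam(C_0)+2m\le(2m+1)\diam(C_0)$.

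With these in hand I would compare the factors. For the $j$th activation factor, apply Lemma~\ref{rat est} with $B = C_0\setminus\{x_i\}_{i<j}$, $A = B\cup D$, $u = x_{j-1}\in A^c$, and $v = x_j$: here $A\setminus B = D$, $\diam(B\cup\{u\})\le\diam(C_0)$, and $\dist(u,D)\ge\dist(C_0,D)$, so the lemma gives a lower bound of the form $1 - c_0|C_0\cup D|\diam(C_0)^\beta/\dist(C_0,D)^\beta$. For the $j$th transport factor with $j\ge 2$, apply Lemma~\ref{rat est} with $B = \bdy E_j$, $A = \bdy(E_j\cup D)\supseteq B$, $u = y_{j-1}\in E_j\subseteq A^c$, and $v = y_j$, using the estimates from the previous paragraph; this again gives a bound of the same form, up to an $m$- and $\beta$-dependent constant. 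The one factor not directly covered by Lemma~\ref{rat est} is the first transport factor, whose anchor $y_0 = x_m$ may lie on $\bdy(C_0\setminus\AA_0)$; there I would argue by hand, noting that $\phi(x_m,y_1)>0$ cancels in the ratio, leaving $\sum_{w\in\bdy(C_0\setminus\AA_0)}\phi(x_m,w)\,\big/\,\sum_{w\in\bdy((C_0\setminus\AA_0)\cup D)}\phi(x_m,w)$, and bounding this below by $1$ minus the ratio of the sum over the new boundary points (which is $\lesssim|C_0\cup D|\dist(C_0,D)^{-\beta}$) to the old sum (which is at least $\phi(x_m,y_1)\gtrsim\diam(C_0)^{-\beta}$); this yields a bound of the same form once more.

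Multiplying the $2m-1$ bounds, each of the form $1 - c_0|C_0\cup D|\diam(C_0)^\beta/\dist(C_0,D)^\beta$ with $c_0 = c_0(d,m,\beta)$, and weakening the exponent from $2m-1$ to $2m$ via $(1-s)^{2m-1}\ge(1-s)^{2m}$ for $s\in[0,1]$, gives \eqref{less simple rat bd} with $c = c_0$ whenever $c_0|C_0\cup D|\diam(C_0)^\beta\le\dist(C_0,D)^\beta$; this is the only regime in which the lemma is applied. The main obstacle is the transport factors: the boundary operation does not distribute over the union $E_j\cup D$, so one must separately verify $\bdy E_j\subseteq\bdy(E_j\cup D)$ and control the new boundary points, and the first transport step needs the elementary argument above because its anchor can sit on the boundary, violating the hypothesis of Lemma~\ref{rat est}. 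The activation factors, and the bookkeeping of the $m$- and $\beta$-dependent constants via the triangle inequality, are routine.
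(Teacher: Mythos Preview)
Your proof is correct and follows the same approach as the paper: expand both probabilities via the product formula of Section~\ref{tp formula} and bound each factor ratio with Lemma~\ref{rat est}. You are in fact slightly more careful than the paper about the first transport factor, whose anchor $x_m$ can lie on $\bdy(C_0\setminus\AA_0)$ and hence violate the literal hypothesis $u\in A^c$ of Lemma~\ref{rat est}; you handle this case directly, whereas the paper applies the lemma uniformly (which is harmless, since the conclusion of Lemma~\ref{rat est} only actually requires $u\notin A\setminus B$, and here $A\setminus B\subseteq\bdy D$ is far from $x_m$).
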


\begin{proof}
By the definition of the CAT dynamics,
\begin{equation}\label{rat prod bd}
\frac{\P_{C_0 \cup D} (\cal X_0 = x)}{\P_{C_0} (\cal X_0 = x)} = \frac{|C_0|}{|C_0 \cup D|} \prod_{l=1}^{m-1} \frac{\mu_{C_l \cup D, x_l} (x_{l+1})}{\mu_{C_l,x_l} (x_{l+1})} \prod_{l=m}^{2m-1} \frac{\mu_{\bdy (C_l \cup D), x_l} (x_{l+1})}{\mu_{\bdy C_l, x_l} (x_{l+1})},
\end{equation}
where $(C_l)_{1 \leq l \leq 2m}$ is defined by \eqref{cl def}. 
We aim to apply Lemma~\ref{rat est} with
\[
A = 
\begin{cases}
C_l \cup D & 1 \leq l < m,\\
\bdy (C_l \cup D) & m \leq l < 2m,
\end{cases}
\quad
B = 
\begin{cases}
C_l & 1 \leq l < m,\\
\bdy C_l & m \leq l < 2m,
\end{cases}
\quad u = x_l, \quad \text{and} \quad v = x_{l+1},
\]
for each $l$. The hypotheses of Lemma~\ref{rat est} require that $B \subseteq A$, $u \in A^c$, and $v \in B$. The third requirement is met by \eqref{cl def}. The first two requirements are met because
\[
C_l \subseteq C_l \cup D, \quad \bdy C_l \subseteq \bdy (C_l \cup D), \quad \text{and} \quad x_l \in (C_l \cup D)^c.
\]
Indeed, the second point is due to the assumption that $\dist(C_0,D) \geq 2m$, which implies that $\overline{D}$ is disjoint from $\overline{C}_l$; the third is also due to this assumption, which implies that $x_l \in D^c$, as well as the assumption that $\P_{C_0} (\cal X_0 = x) > 0$, which implies that $x_l \in C_l^c$.

By Lemma~\ref{rat est}, for each $1 \leq l < m$,
\[
\frac{\mu_{C_l \cup D, x_l} (x_{l+1})}{\mu_{C_l,x_l} (x_{l+1})} \geq 1 - |C_l \cup D| \frac{\diam (C_l \cup \{x_l\})^\beta}{\dist (x_l, D)^\beta} \geq 1 - |C_0 \cup D| \frac{\diam (C_0)^\beta}{\dist(C_0,D)^\beta}.
\]
The second inequality holds because $C_l \subseteq C_0$, $x_l \in C_0$, and $C_l \cup \{x_l\} \subseteq C_0$ for each such $l$. Again, by Lemma~\ref{rat est}, for each $m \leq l < 2m$,
\[
\frac{\mu_{\bdy (C_l \cup D), x_l} (x_{l+1})}{\mu_{\bdy C_l, x_l} (x_{l+1})} \geq 1 - |\bdy (C_l \cup D)| \frac{\diam (\bdy C_l \cup \{x_l\})^\beta}{\dist (x_l, D)^\beta} \geq 1 - c |C_0 \cup D| \frac{\diam (C_0)^\beta}{\dist(C_0,D)^\beta}.
\]
The second inequality holds with $c = 2d (2m+6)^\beta$ because, for each such $l$,
\begin{align*}
|\bdy (C_l \cup D) | &\stackrel{\ast}{=} |\bdy C_l| + |D| \leq 2d |C_l| + |D| \leq 2d |C_0| + |D| \stackrel{\ast}{\leq} 2d |C_0 \cup D|,\\
\diam (\bdy C_l \cup \{x_l\}) &\leq \diam (\overline{C}_{2m}) \stackrel{\ast \ast}{\leq} \diam (C_0) + m + 2 \leq (m+3) \diam (C_0),\\ 
\dist (x_l, D) &\stackrel{\ast \ast}{\geq} \dist(C_0, D) - m \stackrel{\ast}{\geq} \dist(C_0, D)/2.
\end{align*}
Note that the equality and inequalities labeled with one asterisk hold by the assumption that $\dist (C_0, D) \geq 2m$ and by a.m.l.g. The inequalities labeled with two asterisks hold by a.m.l.g.\ \eqref{amlg prop}. Substituting these ratio bounds into \eqref{rat prod bd} implies \eqref{less simple rat bd}.
\end{proof}

\section{Proof ideas}\label{sec proof ideas}

\subsection{Idea \#1: A way to obtain diameter--agnostic bounds}

The key feature of Theorems~\ref{drift}~and~\ref{reach well sep} is that their conclusions are agnostic of the diameter of the initial state, despite the dependence of the CAT transition probabilities on the distances between elements. 
The simple idea underlying these results is that it is typical for CAT to form a more ``desirable'' configuration when activation is initiated at an undesirable subset. For example, this is true when it is desirable for a configuration to concentrate its elements in a ``small'' ball or to consist of ``large'' connected components. This idea leads to diameter--agnostic bounds because activation begins uniformly at random. 

\subsubsection{Application to Theorem~\ref{drift}} 
Theorem~\ref{drift} aims to concentrate the elements of a configuration in a ball with a diameter that does not depend on the diameter of the initial state. To obtain a more desirable configuration, we deplete the elements outside of this ball, i.e., through the occurrence of the event
\[
\AddToBall_{x,r} := \big\{ |\C_1 \cap \cal B_x (r+m)| > |\C_0 \cap \cal B_x (r)| \big\},
\] 
which is defined for $x \in \Z^d$ and $r \in \R_{>0}$. The key input to the proof of Theorem~\ref{drift} is a lower bound on the probability that this event occurs.

\begin{lemma}\label{add to ball}
If $n \leq 2m+1$ and if $C \subset_n \Z^d$ satisfies $| C \cap \cal B_x (r) | \in \llbracket m+1, n-1 \rrbracket$ for some $x \in \Z^d$ and $r \geq 1$, then 
\begin{equation}\label{eq add to ball}
\P_{C} (\AddToBall_{x,r}) \geq p,
\end{equation}
for $p>0$ only depending on $r$.
\end{lemma}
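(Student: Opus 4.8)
The plan is to reduce the estimate to a uniform lower bound on the probability that activation removes every element of $C$ lying outside $\cal B_x(r)$, and then to extract that bound from the fact that the first activation is uniform. Write $I := C \cap \cal B_x(r)$ and $O := C \setminus \cal B_x(r)$; the hypotheses $n \le 2m+1$ and $|I| \ge m+1$ force $|O| = n - |I| \le m$, which is the decisive structural point, since activation (which removes exactly $m$ elements) then has room to remove all of $O$. First I would check that $\P_C(\AddToBall_{x,r}) \ge \P_C(O \subseteq \AA_0)$: on $\{O \subseteq \AA_0\}$ the set remaining after activation is $\C_{0,m} = C \setminus \AA_0 = I \setminus \AA_0 \subseteq \cal B_x(r)$, and it is nonempty because $|\C_{0,m}| = n - m \ge 1$; since each transport substep appends a point at Euclidean distance $1$ from the current set, an induction on the substep index gives $\C_{0,m+l} \subseteq \cal B_x(r+l)$ for $0 \le l \le m$, so $\C_1 \subseteq \cal B_x(r+m)$, whence $|\C_1 \cap \cal B_x(r+m)| = n > |I| = |C \cap \cal B_x(r)|$, i.e.\ $\AddToBall_{x,r}$ occurs.

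It then remains to bound $\P_C(O \subseteq \AA_0)$ below by a constant depending only on $r$. Because the first activation is uniform and every activation weight $\phi(u,\cdot)$ is at most $1$, the normalization appearing in $\mu_{U,u}$ never exceeds $|U| \le n$, so every activation transition obeys $\mu_{C\setminus S_i,\,o^{(i)}}(o^{(i+1)}) \ge n^{-1}\big(\dist(o^{(i)}, C\setminus S_i)\big/\|o^{(i)}-o^{(i+1)}\|\big)^{\beta}$, where $S_i := \{o^{(1)},\dots,o^{(i)}\}$. Hence it suffices to produce, for every such $C$, an ordering $o^{(1)},\dots,o^{(|O|)}$ of $O$ along which $\|o^{(i)}-o^{(i+1)}\| \le K(r)\,\dist(o^{(i)}, C\setminus S_i)$ for all $i$, with $K(r)$ depending only on $r$: this yields $\P_C(O \subseteq \AA_0) \ge \P_C\big(\AA_{0,1}=o^{(1)},\dots,\AA_{0,|O|}=o^{(|O|)}\big) \ge n^{-1}\big(n^{-1}K(r)^{-\beta}\big)^{m-1} =: p$. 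For $|O| = 1$ the ordering is trivial and one may take $p = 1/n$.

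Constructing the ordering when $|O| \ge 2$ is the crux, and I expect it to be the main obstacle. The geometric fact to lean on is that $\diam(I) < 2r$ (as $I \subseteq \cal B_x(r)$), which gives $\diam(O) < 2\max_{o\in O}\dist(o,I) + 2r$; in particular, whenever the current point's distance to the nearest not-yet-activated element of $O$ is large, that distance is comparable to the current point's distance to $I$. I would order $O$ greedily, beginning with the element farthest from $I$ and, at each step, passing to the not-yet-activated element of $O$ having the largest distance to $I$ among those lying within a fixed multiple of $\dist(o^{(i)}, C\setminus S_i)$ of the current point. The purpose of this rule is to process the elements far from $I$ before those near it, so that one is never ``trapped'' at a point that is close to $I$ but far from all remaining elements of $O$; once every remaining element of $O$ is within $O(r)$ of $I$, those elements are within $O(r)$ of one another and any completion finishes the job, which closes the induction. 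Neither naive ordering---nearest neighbour within $O$, or decreasing distance to $I$---works on its own (each can force a long step past a much closer competitor), so the delicate part is verifying that this hybrid rule keeps every ratio $\|o^{(i)}-o^{(i+1)}\|/\dist(o^{(i)}, C\setminus S_i)$ bounded by a function of $r$ alone.
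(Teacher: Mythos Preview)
Your reduction to the event $\{O \subseteq \AA_0\}$ is valid, but it is stronger than what is needed, and this extra strength is exactly what forces you into the ordering problem you identify as ``the main obstacle.'' You are right that neither naive ordering works, and your hybrid rule is plausible---the Lipschitz bound $|D_i - D_j| \le \|o_i - o_j\|$ for $D_i = \dist(o_i,I)$ does constrain the geometry in your favor---but you have not verified it, and it is genuinely delicate. Whether or not the hybrid rule can be made to work, the paper's argument sidesteps the issue entirely.

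The paper does \emph{not} try to activate all of $O$. Instead it sets $J := \inf\{j: \AA_{0,j} \in \cal B_x(r)\}$ and conditions on $\{J>1\}$, which costs only a factor $1/n$ since $\AA_{0,1}$ is uniform. The point is that on $\{J>1\}$ at least one element outside the ball has already been activated, so to realize $\AddToBall_{x,r}$ it suffices that \emph{from step $J$ onward} activation stays in $\cal B_x(r)$ and transport stays in $\cal B_x(r+m)$; if instead $J=\infty$ (all activation outside), the event holds automatically. Crucially, the conditional probability of ``stay inside after time $J$'' involves only transitions between points of $\cal B_x(r+m)$, so every consecutive distance is at most $2(r+m)$, and Lemma~\ref{poss implies typ} gives a lower bound depending only on $r$---uniformly over $J$ and over whatever happened during steps $1,\dots,J-1$. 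The transitions among the outside elements, which are the source of your difficulty, are simply never estimated; they are absorbed into the conditioning.

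In short: your approach tries to control every activation substep, which forces you to handle arbitrarily spread configurations in $O$; the paper controls only the substeps after the process first enters the ball, where all distances are $O(r)$ by construction. That is the missing idea.
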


\begin{proof}
The expression of {\em Idea \#1} is the claim that
\begin{equation}\label{j1 claim}
\P_C (\AddToBall_{x,r} \mid J > 1) \geq p,
\end{equation}
for some $p>0$ only depending on $r$, where $J := \inf \{j: \AA_{0,j} \in \cal B_x (r)\}$. To prove \eqref{eq add to ball}, it suffices to prove \eqref{j1 claim} instead, since
\[
\P_C (J>1) = \P_C (\AA_{0,1} \notin \cal B_x (r)) \geq \frac1n. 
\]
The inequality follows from the fact that $\AA_{0,1}$ is uniformly random in $C$, at least one element of which lies outside of $\cal B_x (r)$ by assumption. 

We claim that
\begin{equation}\label{a2b incl}
\AddToBall_{x,r} \cap \{J > 1\} \supseteq \AddToBall' \cap \{J > 1\},
\end{equation}
where $\AddToBall'$ is the event
\[
\{J = \infty\} \cup \{1 < J \leq m, \,\, \{\AA_{0,j}\}_{j=J}^m \subset \cal B_x (r), \,\, \T_0 \subseteq \cal B_x (r+m)\}.
\]
In other words, either all activation takes place outside of $\cal B_x (r)$ or, once an element of $\cal B_x (r)$ is activated, all subsequent activation remains within $\cal B_x (r)$ and transport remains within $\cal B_x (r+m)$. We can bound below the probability that $\AddToBall'$ occurs without estimating $\P_C (J=\infty \mid J > 1)$.

Given that $\{J > 1\}$ occurs, $\AddToBall'$ occurs with positive probability because $\cal B_x (r)$ contains more than $m$ elements of $C$ by assumption. Lemma~\ref{poss implies typ} then implies that there are $a,b>0$ such that 
\[
\P_C (\AddToBall' \mid J > 1) \geq a r^{-b}
\]
because, when $\AddToBall'$ occurs, the distance between consecutive elements in $(\AA_{0,J}, \dots, \AA_{0,m}, \T_{0,1}, \dots, \T_{0,m})$ is at most $2(r+m) \lesssim r$. This proves \eqref{j1 claim} with $p = ar^{-b}$, subject to \eqref{a2b incl}.

The inclusion \eqref{a2b incl} holds because, if $\{J=\infty\}$ occurs, then {\em every} element of $C$ outside of $\cal B_x (r)$ is activated; these elements are necessarily transported to $\T_0 \subseteq \cal B_x (r+m)$. Alternatively, if $\AddToBall' \cap \{1 < J \leq m\}$ occurs, then $\AddToBall_{x,r}$ occurs because at least one element outside of $\cal B_x (r)$ is activated and every activated element is transported into $\cal B_x (r+m)$.
\end{proof}

\subsubsection{Application to Theorem~\ref{reach well sep}}

Theorem~\ref{reach well sep} aims to form a configuration that consists of well separated line segments of between $m+1$ and $2m+1$ elements. The first stage of this process forms a configuration consisting of connected components which are ``large,'' in the sense that each has more than $m$ elements. To obtain a more desirable configuration, we deplete the elements of small components, i.e., through the occurrence of the event 
\begin{equation}\label{dsc def}
\DepleteSmallComps_t = \{ |\scr R_{t+1}| < |\scr R_t|\},
\end{equation}
which is defined for $t \in \Z_{\geq 0}$ in terms of $\scr R_t := \{x \in \C_t: |\comp_{\C_t} (x)| \leq m\}$, where $\comp_C (y)$ denotes the set of elements that are connected to $y \in \Z^d$ in $C \subset \Z^d$. The key input to the proof of Theorem~\ref{reach well sep} is a lower bound on the probability that this event occurs. 

\begin{lemma}\label{depl small comps}
There is $p>0$ such that, if $C \subset_n \Z^d$ contains some $x \in \Z^d$ for which $|\comp_C (x)| \leq m$, then 
\begin{equation}\label{eq depl small comps}
\P_C (\DepleteSmallComps_0) \geq p.
\end{equation}
\end{lemma}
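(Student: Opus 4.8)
The plan is to exploit {\em Idea \#1}. Write $K:=\comp_C(x)$, so $1\le|K|\le m$, and condition on $\AA_{0,1}=x$; this uniform first activation costs only a factor $1/n$ and is the source of diameter-agnosticism. Conditioned on this, the proof engineers the remaining $m-1$ activations and the $m$ transports so that every substep is a jump of one of two harmless types: either a displacement of Euclidean length at most $2m$ inside a set of at most $m+1$ points, or a jump to a vertex (resp.\ boundary vertex) that is {\em nearest} to the previously placed element. A jump of the first type has probability at least $(2m)^{-\beta}/(2dn)$ because $\phi\ge(2m)^{-\beta}$ on displacements of length at most $2m$ and the relevant set has at most $2dn$ points; a jump of the second type has at least the same probability because a nearest vertex maximizes $\phi$ over the relevant set. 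Multiplying the $2m$ resulting factors gives $\P_C(\DepleteSmallComps_0)\ge c^{2m}/n^{2m}$ for a constant $c=c(d,m,\beta)$, independent of $\diam(C)$ as required; the elementary bounds here are those behind Lemma~\ref{poss implies typ}.

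To specify the engineered step, condition on the activation proceeding {\em greedily} for $l\ge1$ — $\AA_{0,l+1}$ is a vertex of $\C_{0,l}$ nearest to $\AA_{0,l}$ — for as long as that nearest vertex lies in a small component of $C$. (When $|K|\ge2$ this keeps the walk inside $K$ at first, since a neighbor of $\AA_{0,l}$ is at distance $1$ while vertices of other components are at distance $>1$.) Two cases result. In case~(a), all $m$ activations are spent this way, so $\AA_0$ consists of $m$ elements of small components of $C$; then transport all $m$ elements to the component $Q$ of $\C_{0,m}=C\setminus\AA_0$ nearest to $\AA_{0,m}$, placing them one at a time as a connected blob adjacent to $Q$ ($Q$ exists since $|\C_{0,m}|=n-m\ge1$). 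In case~(b), at some substep the nearest vertex first lies in a component $L$ of $C$ with $|L|\ge m+1$; having by then activated $j\ge1$ small-component elements (at least $x$), activate that vertex of $L$ and then the first $m-j-1$ further vertices visited by a breadth-first search of $L$ from it, so that the set $A'$ of the $m-j$ activated vertices of $L$ is connected and has diameter $\le2m$ (possible since $m-j\le m<|L|$). In the transport phase of case~(b), first restore the positions of $A'$, at each substep adding a vertex of $A'$ adjacent to the current set — possible until all of $A'$ is restored, because $L$ is connected — thereby rebuilding $L$ intact inside $C\setminus\AA_0$; then place the remaining $j$ transported elements as a connected blob adjacent to $L$.

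It remains to verify $|\scr R_1|<|\scr R_0|$ on this event. In case~(a), $\C_1=(C\setminus\AA_0)\cup\T_0$ where $\AA_0$ lies entirely in small components of $C$, so removing $\AA_0$ only shrinks those components (every remaining piece still has $\le m$ elements), while $Q\cup\T_0$ is a single component with $|Q|+m>m$ elements; hence $|\scr R_1|\le|\scr R_0|-m<|\scr R_0|$. In case~(b), write $S$ for the $j$ small-component elements of $\AA_0$ and $B$ for the $j$ transported elements attached at the end, so $\C_1=(C\setminus S)\cup B$ with $L$ reconstructed intact; removing $S$ only shrinks small components and leaves $L$ untouched, and $L\cup B$ has $|L|+j>m$ elements, so $|\scr R_1|=|\scr R_0|-j<|\scr R_0|$. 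Either way $\DepleteSmallComps_0$ occurs, which completes the proof given the probability bound from the first paragraph.

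The main obstacle is the case-(b) transport: one must argue that the not-yet-restored part of $A'$ always contains a vertex adjacent to the current configuration until $L$ is whole (so that the reconstruction proceeds by unit-length jumps), which uses connectedness of $L$ and the fact that $B$ is attached only after $A'$ is fully restored. A second point requiring care is that greedy activation earns a factor $1/n$ per step {\em only} because each step picks a nearest vertex, so the case split — switching to the breadth-first-ball strategy at the exact moment a large component first becomes nearest — is a necessity rather than a convenience; and the comparison of $|\scr R_1|$ with $|\scr R_0|$ must be carried out at the level of individual elements and their component sizes, since removing elements can both shrink small components and carve small pieces out of large ones.
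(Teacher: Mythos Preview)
Your approach matches the paper's (Idea \#1): both condition on the first activation landing in a small component and then engineer the remaining substeps to deplete $\scr R_0$; the paper's stopping index $K=\inf\{k:\AA_{0,k}\in\C_0\setminus\scr R_0\}$ plays exactly the role of your case split (a)/(b). Your explicit greedy/nearest-vertex mechanism for activation and first transport is in fact more careful than the paper's sketch about why the first transport jump carries a diameter-agnostic cost.

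There is, however, a gap in your transport construction. In case~(a), after placing $\T_{0,1}$ at the nearest boundary vertex of $\C_{0,m}$, you want to grow a connected blob by choosing each $\T_{0,l+1}$ adjacent to $\T_{0,l}$. But $\T_{0,1}$ may be completely surrounded by $\C_{0,m}$ --- all $2d$ of its neighbors could already lie in $\C_{0,m}$ --- in which case no such $\T_{0,2}$ exists in $\bdy\C_{0,m+1}$. The same issue arises for the blob $B$ in case~(b). A clean fix is to keep using the nearest-boundary-vertex rule for \emph{every} transport substep: if the nearest boundary vertex of $\C_{0,m+l}$ to $\T_{0,l}$ is at distance $r$, then every lattice point within distance $<r$ of $\T_{0,l}$ lies in $\C_{0,m+l}$ and hence in $\comp_{\C_{0,m+l}}(\T_{0,l})$, so $\T_{0,l+1}$ is adjacent to that component. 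Thus all of $\T_0$ ends up in a single component of $\C_1$ containing at least one vertex of $\C_{0,m}$, giving size $\ge m+1$ as needed. The cost per step is still $\ge(2dn)^{-1}$, and your verification of $|\scr R_1|<|\scr R_0|$ goes through unchanged.

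Two minor points. In case~(b) you write $|\scr R_1|=|\scr R_0|-j$, but only $\le$ is true (the blob $B$ could bridge additional small-component remnants into $L$); this does not affect the conclusion. And your final-paragraph worry about ``carving small pieces out of large ones'' is moot in your own construction: in case~(a) no large component is touched, and in case~(b) you restore $L$ exactly before attaching $B$.
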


\begin{proof}
The expression of {\em Idea \#1} is the claim that
\begin{equation}\label{k1 claim}
\P_C (\DepleteSmallComps_0 \mid K > 1) \geq p,
\end{equation}
for some $p>0$, where $K := \inf \{k: \AA_{0,k} \in \C_0 \setminus \scr R_0\}$. To prove \eqref{eq depl small comps}, it suffices to prove \eqref{k1 claim} instead, since
\[
\P_C (K>1) = \P_C (\AA_{0,1} \in \scr R_0) \geq \frac1n. 
\]
The inequality follows from the fact that $\AA_{0,1}$ is uniformly random in $C$, at least one element of which belongs to $\scr R_0$ by assumption. 

We claim that
\begin{equation}\label{dep incl}
\DepleteSmallComps_0 \cap \{K > 1\} \supseteq \DepleteSmallComps' \cap \{K > 1\},
\end{equation}
where $\DepleteSmallComps'$ is the event
\[
\Big( \{K = \infty\} \cup \big\{1 < K \leq m, \,\, \{ \AA_{0,k} \}_{k=K}^m \cup \{\T_{0,1}\}  \subseteq \comp_{\C_0} (\AA_{0,K})  \Big) \cap \{\T_0 \,\, \text{is connected in $\C_1$} \}.
\]
In other words, the event in parentheses occurs when either activation only occurs at small connected components or, once an element of a large connected component is activated, all subsequent activation takes place at this component, along with the site to which transport first occurs.

Given that $\{K>1\}$ occurs, $\DepleteSmallComps'$ occurs with positive probability because, by assumption, every connected component of $\C_0 \setminus \scr R_0$ has more than $m$ elements. Lemma~\ref{poss implies typ} then implies that there is $p>0$ such that
\[
\P_C (\DepleteSmallComps' \mid K > 1) \geq p
\]
because, when $\DepleteSmallComps'$ occurs, the distance between consecutive elements in the sequence $(\AA_{0,K}, \dots, \AA_{0,m}, \T_{0,1}, \dots, \T_{0,m})$ is at most $n$. This proves \eqref{k1 claim} subject to \eqref{dep incl}.

The inclusion \eqref{dep incl} holds because, if $\{K=\infty, \T_0 \,\,\text{is connected in $\C_1$}\}$ occurs, then {\em every} activated element newly joins or forms a large connected component with the element of $\C_0 \setminus \AA_0$ to whose boundary $\T_{0,1}$ was transported. If $\DepleteSmallComps' \cap \{1 < K \leq m\}$ occurs instead, then $\DepleteSmallComps_0$ occurs because the elements at $\{\AA_{0,k}\}_{k<K}$ newly join the large connected component $\comp_{\C_0} (\AA_{0,K})$, and the rest of the activated elements return to this component.
\end{proof}

\subsection{Idea \#2: An increasingly accurate approximation of the CAT dynamics}

The key idea of the proof of Theorem~\ref{growth} is that, when a configuration of $n \geq 2m+2$ elements consists of clusters of $m+1$ or more elements, the CAT dynamics approximately behaves as if the clusters inhabited separate ``copies'' of $\Z^d$. This approximate dynamics is easier to analyze than the CAT dynamics. Critically, the accuracy of this approximation improves with greater cluster separation. Hence, we can use it to estimate the probability of events that entail sufficiently rapid separation growth, including the event in Theorem~\ref{growth}.

To make this precise, we `lift'' CAT to a Markov chain $\C_t^I = (\C_t^i)_{i \in I}$ on the state space $\Part$, the collection of ordered partitions of configurations in $\cal S$, each part of which has more than $m$ elements. Here, $I$ is an interval of the form $\llbracket 1, k \rrbracket$ for some $k \in \Z_{\geq 1}$. Given $\C_t^I$, to form $\C_{t+1}^I$, we simply apply the CAT dynamics to $\C_t := \cup_{i \in I} \C_t^i \in \cal S$ and update the partition to reflect the new positions of the elements. In other words, we treat the partition like a fixed coloring of the elements, which plays no role in the dynamics. We emphasize that, in the context of a partition $C^I$, we use $C$ to denote $\cup_{i \in I} C^i$.

Next, we define {\em CopyCAT} to be the Markov chain $\D_t^I = (\D_t^i)_{i \in I}$ on the state space of tuples of configurations, $\Tup = \cup_{i \geq 1} {\cal S}^i$, with the following dynamics. Given $\D_t^I$, to form $\D_{t+1}^I$, we apply the CAT dynamics to one entry, $\D_t^{J_t}$, randomly chosen with a probability proportional to $|\D_t^i|$. In other words, 
\begin{equation}\label{copycat def}
\D_{t+1}^i := 
\begin{cases}
\mathrm{CAT} (\D_t^i) & i = J_t,\\
\D_t^i & i \neq J_t,
\end{cases}
\end{equation}
where, for any $C,C' \in \cal S$ and $i \in I$, 
\[
\P (\mathrm{CAT} (C) = C') = \P_C (\C_1 = C') \quad \text{and} \quad \P (J_t = i \mid \D_t) = \frac{|\D_t^i|}{\sum_{i \in I} |\D_t^i|}.
\]
We use $\cal Y_t$ to denote the sequence of elements of $\D_t^I$ that are activated and transported elements at time $t$, i.e., the CopyCAT analogue of $\cal X_t$.

\begin{figure}
\includegraphics[width=0.65\linewidth]{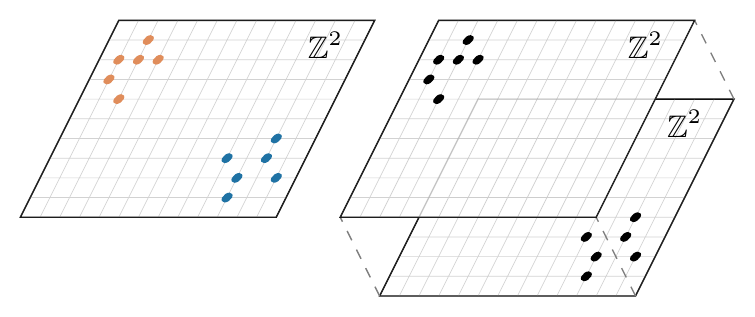}
\caption{The lifting of CAT corresponds to a coloring which is ignored by the dynamics (left), while a CopyCAT configuration may have elements in different ``copies'' of $\Z^2$ (right).}
\label{multicat fig}
\end{figure}

To state the key approximation result, we define the separation of $U^I \in \Tup$ to be
\[
\sep (U^I) := \min_{i \in I} \dist (U^i, U^{\neq i}),
\]
where $U^{\neq i} = \cup_{j \in I\setminus\{i\}} U^j$. 
For $a, b \in \R_{>0}$, we define tuples with absolute separation $a$ and {\em relative} (to diameter) {\em separation} $b$ by 
\begin{equation}\label{tup def}
\Tup_{a,b} := \left\{U^I \in \Tup: \sep (U^I) \geq a \quad \text{and} \quad \forall i \in I, \,\, \diam (U^i) \leq b \log \dist (U^i,U^{\neq i})\right\}.
\end{equation}
Note that $\Tup_{a,b} \subseteq \Part$ since $a$ is positive. For $\gamma \in \R_{>0}$, $\delta \in (0,\frac12)$, and $t \in \Z_{\geq 0}$, we define $\mathrm{Growth}_{a, b, \gamma, \delta; t}$ to be the set of sequences $(U_s^I)_{s \geq 0} \in \Tup_{a,b}^\N$ such that
\[
\forall s \leq t, \quad \sep (U_s^I) \geq \gamma s^{1/2 - \delta}.
\]
Additionally, we define $\mathrm{Growth}_{a, b, \gamma, \delta} = \cap_{t \geq 0} \mathrm{Growth}_{a, b, \gamma, \delta; t}$.

\begin{proposition}[Key approximation result]\label{copycat growth imp}
Let $b, \gamma > 0$ and $\delta \in (0,\frac12 - \frac{1}{\beta})$. If $a\geq 2m$ is sufficiently large, then 
\begin{equation*}
\P_{C^I} \left( (\C_t^I)_{t \geq 0} \in \mathrm{Growth}_{a,b,\gamma,\delta} \right) \geq (1-o_a (1)) \, \P_{C^I} \left( (\D_t^I)_{t \geq 0} \in \mathrm{Growth}_{a,b,\gamma,\delta} \right)
\end{equation*} 
for every $C^I \in \Tup_{a,b}$.
\end{proposition}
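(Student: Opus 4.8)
The plan is to couple the lifted CAT chain $\C_t^I$ with the CopyCAT chain $\D_t^I$ so that, on the event $\mathrm{Growth}_{a,b,\gamma,\delta}$ for CopyCAT, the two chains agree step-by-step with probability $1 - o_a(1)$. The intuition, following \emph{Idea \#2}, is that when the clusters are separated by at least $a$ (and their diameters are logarithmically small in the separation), activation and transport under CAT essentially never move an element from one cluster to another, and the non-local interaction weights $\phi$ between distinct clusters are negligible; so one step of CAT looks like one step of CopyCAT applied to the single cluster that contains the initially (uniformly) activated element. Since initial activation is uniform over $\cup_i \C_t^i$, the index of that cluster has exactly the CopyCAT law $\P(J_t = i \mid \C_t) = |\C_t^i|/\sum_j|\C_t^j|$, so the ``which cluster moves'' randomness already matches; only the within-step dynamics needs comparison.

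The key quantitative step is to bound, uniformly over $U^I \in \Tup_{a,b}$, the total-variation distance between one CAT step and one CopyCAT step. Here I would invoke Lemma~\ref{lem less simple rat bd}: with $C_0$ the active cluster and $D$ the union of the others, $\dist(C_0,D) \geq \sep(U^I) \geq a$ while $\diam(C_0) \leq b\log\dist(C_0,U^{\neq})$, so the ratio bound \eqref{less simple rat bd} gives $\P_{C_0\cup D}(\cal X_0 = x)/\P_{C_0}(\cal X_0 = x) \geq (|C_0|/|C_0\cup D|)(1 - O(|C_0\cup D| (\log a)^\beta / a^\beta))^{2m}$ for every admissible sequence $x$ supported on $C_0$; summing over such $x$ and separately controlling the probability that $\cal X_0$ is \emph{not} supported on a single cluster (which requires an element of $\bdy$ of one cluster to lie within distance $a$ of another, impossible once $a \geq 2m$ plus a little room, or else requires transport across a gap of size $\geq a$, whose probability is $O(a^{-\beta} \cdot \mathrm{poly})$ by the transition-probability formula in Section~\ref{tp formula}) yields a per-step TV bound of the form $\eta_a := O(n^2 (\log a)^\beta a^{-\beta})$, which is $o_a(1)$ because $\beta > 2 > \delta^{-1}$ makes the powers of $a$ dominate. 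Crucially, on $\mathrm{Growth}_{a,b,\gamma,\delta}$ the separation only \emph{grows}, so this bound is preserved along the trajectory and even improves; I would set up the coupling inductively, at each time $t$ coupling the CAT and CopyCAT steps to agree with probability $\geq 1 - \eta_{a + \gamma t^{1/2-\delta}}$ given agreement so far and given that the CopyCAT path remains in $\mathrm{Growth}$.

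To finish, on the coupling event of total agreement up to time $t$, membership of $(\C_s^I)_{s\leq t}$ in $\mathrm{Growth}_{a,b,\gamma,\delta;t}$ is implied by that of $(\D_s^I)_{s\leq t}$; taking $t\to\infty$ and summing the failure probabilities gives
\[
\P_{C^I}\big((\C_t^I)_{t\geq 0}\in\mathrm{Growth}_{a,b,\gamma,\delta}\big)
\;\geq\; \P_{C^I}\big((\D_t^I)_{t\geq 0}\in\mathrm{Growth}_{a,b,\gamma,\delta}\big)
\;-\;\sum_{t\geq 0}\eta_{a+\gamma t^{1/2-\delta}},
\]
and the tail sum is $o_a(1)$ uniformly in the starting point provided $\delta < \tfrac12 - \tfrac1\beta$ (so that $(1/2-\delta)\beta > 1$, making $\sum_t (a+\gamma t^{1/2-\delta})^{-\beta}$ a convergent series that vanishes as $a\to\infty$); the $\log$ factor does not spoil convergence. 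Rearranging into multiplicative form $(1 - o_a(1))\P_{C^I}(\cdots)$ gives the claim.

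\textbf{Main obstacle.} The delicate point is handling the boundary/transport substeps rather than the activation substeps: transporting an element to $\bdy \C_{t,l}$ could in principle land it in a region between clusters, and one must rule out (or bound the probability of) such ``bridging'' moves, since they are exactly the events that break the correspondence with CopyCAT and, if too likely, would prevent separation from growing. The a.m.l.g.\ property \eqref{amlg prop} plus the explicit $\|u-v\|^{-\beta}$ decay of transport probabilities is what saves this, but making the dependence on $n$, $m$, $b$ explicit and checking that everything is genuinely $o_a(1)$ uniformly over $\Tup_{a,b}$ — not just for fixed diameters — is the part that needs care.
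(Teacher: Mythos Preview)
Your proposal has the right core: the key input is indeed Lemma~\ref{lem less simple rat bd}, and the separation lower bound $r_a(s) := \max\{a,\gamma s^{1/2-\delta}\}$ along $\mathrm{Growth}$ paths is exactly what makes the error terms summable when $\delta < \tfrac12 - \tfrac1\beta$. But the coupling/total-variation framing creates a genuine gap at the last step.

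A TV bound only yields the \emph{additive} estimate $\P_{C^I}((\C_t^I)\in\mathrm{Growth}) \geq \P_{C^I}((\D_t^I)\in\mathrm{Growth}) - \sum_t \eta_{r_a(t)}$, and you cannot ``rearrange'' this into the multiplicative $(1-o_a(1))$ form: $\P(B)-\epsilon \geq (1-\epsilon)\P(B)$ would require $\P(B)\geq 1$. The proposition must hold for every $C^I\in\Tup_{a,b}$, with no a priori lower bound on the CopyCAT probability, so this step fails as written. (Conditioning your coupling on ``CopyCAT remains in $\mathrm{Growth}$'' does not help, since a TV bound on the unconditional next-step law gives no control on the agreement probability conditional on CopyCAT landing in a particular set.)

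The paper sidesteps this by proving a \emph{pointwise} transition ratio (Proposition~\ref{one step approx}): for each $U^I\in\Tup_{2m,b}$ and each target $V^I$, $\P_{U^I}(\C_1^I=V^I)\geq w(U^I)\,\P_{U^I}(\D_1^I=V^I)$ with $w(U^I)=(1-O(|U|\,\sep(U^I)^{-\beta}(\log\sep(U^I))^\beta))^{2m}$. This follows by summing Lemma~\ref{lem less simple rat bd} over activation--transport sequences $x$ supported on a single cluster and noting that the set of such sequences for CAT \emph{contains} that for CopyCAT once $\sep\geq 2m$; you need not separately control cross-cluster moves, since those only add to the CAT side. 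One then decomposes $\mathrm{Growth}_{a,b,\gamma,\delta;t}$ into paths, applies the Markov property, and obtains the product $\prod_{s\leq t}(1-O(r_a(s)^{-\beta}(\log r_a(s))^\beta))^{2m}$ in front of the CopyCAT probability. Your summability argument then shows this product is $1-o_a(1)$.

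So the fix is small but essential: replace TV-coupling by a state-by-state ratio comparison and multiply along paths rather than subtract. Your ``main obstacle'' about transport landing between clusters is not actually an obstacle: once $\sep\geq 2m$, the boundaries $\bdy C_l$ of the active cluster lie inside $\bdy(C_l\cup D)$ and are disjoint from the other clusters' closures, which is exactly the hypothesis used in Lemma~\ref{lem less simple rat bd}.
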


The main input to the proof of Proposition~\ref{copycat growth imp} is a comparison of the one-step transition probabilities of CAT and CopyCAT. 

\begin{proposition}[One-step approximation]\label{one step approx}
Let $b>0$ and let $U^I, V^I \in \Tup_{2m,b}$ satisfy $\P_{U^I} (\C_1^I = V^I) > 0$. There is $c>0$ such that 
\begin{equation}\label{eq one step approx}
\P_{U^I} \left( \C_1^I = V^I \right) \geq \left( 1 - c |U| \, \sep (U^I)^{-\beta} \big( \log \sep (U^I) \big)^\beta  \right)^{2m} \P_{U^I} \left( \D_1^I = V^I \right).
\end{equation}
\end{proposition}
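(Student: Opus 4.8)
The plan is to reduce \eqref{eq one step approx} to the ratio bound of Lemma~\ref{lem less simple rat bd}, after matching the CopyCAT step with the CAT event that all activation and transport stay inside a single copy. We may assume $|I| \geq 2$ (otherwise CAT and CopyCAT coincide) and $\P_{U^I}(\D_1^I = V^I) > 0$ (otherwise the right side of \eqref{eq one step approx} vanishes).

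Under these assumptions, by the definition \eqref{copycat def} of CopyCAT the set
\[
\mathcal{I} := \{ i \in I : V^j = U^j \text{ for all } j \neq i, \ \P_{U^i}(\C_1 = V^i) > 0\}
\]
is nonempty, equals $I$ when $V^I = U^I$ and is a singleton otherwise, and
\[
\P_{U^I}(\D_1^I = V^I) = \sum_{i \in \mathcal{I}} \frac{|U^i|}{|U|}\, \P_{U^i}(\C_1 = V^i) = \sum_{i \in \mathcal{I}} \frac{|U^i|}{|U|} \sum_{x:\, U^i \to V^i} \P_{U^i}(\mathcal{X}_0 = x),
\]
where $x \in (\Z^d)^{2m}$ ranges over the sequences of activated and transported elements that realize the transition $U^i \to V^i$ inside copy $i$. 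For each such $x$ I would first check that executing $x$ under the full CAT dynamics started from $U$ has positive probability and yields $\C_1^I = V^I$. Indeed, since $\sep(U^I) \geq 2m$ and $V^i$ is $2m$-separated from $V^{\neq i} = U^{\neq i}$, every coordinate of $x$ lies in $\overline{U^i}$ at distance at least $2m \geq 2$ from $U^{\neq i}$; hence the intermediate sets along $x$ satisfy $C_l = C_l^i \cup U^{\neq i}$, where $C_l^i$ is the corresponding intermediate set built from $U^i$ via \eqref{cl def}, and $\bdy C_l^i \subseteq \bdy C_l$ with $\bdy C_l^i$ disjoint from $U^{\neq i}$. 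Consequently every factor of the CAT probability of $x$ is positive, no element of $U^{\neq i}$ is activated, the colors other than $i$ are unchanged, and $\C_1^I = V^I$. Since sequences associated with distinct $i$ have their first activation in distinct copies, summation gives
\[
\P_{U^I}(\C_1^I = V^I) \;\geq\; \sum_{i \in \mathcal{I}}\ \sum_{x:\, U^i \to V^i} \P_U(\mathcal{X}_0 = x).
\]

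The analytic core is the per-sequence comparison of $\P_U(\mathcal{X}_0 = x)$ with $\P_{U^i}(\mathcal{X}_0 = x)$, which is Lemma~\ref{lem less simple rat bd} with $C_0 = U^i$ and $D = U^{\neq i}$ (its hypotheses hold: $\dist(U^i, U^{\neq i}) \geq \sep(U^I) \geq 2m$ and $\P_{U^i}(\mathcal{X}_0 = x) > 0$), giving
\[
\frac{\P_U(\mathcal{X}_0 = x)}{\P_{U^i}(\mathcal{X}_0 = x)} \;\geq\; \frac{|U^i|}{|U|}\left(1 - c_1\,|U|\,\frac{\diam(U^i)^\beta}{\dist(U^i, U^{\neq i})^\beta}\right)^{2m}.
\]
Because $U^I \in \Tup_{2m,b}$, by \eqref{tup def} we have $\diam(U^i) \leq b\log\dist(U^i, U^{\neq i})$; and since $t \mapsto (\log t / t)^\beta$ is bounded on $[2,\infty)$ and decreasing for $t$ beyond a constant depending on $\beta$, there is $c_2 = c_2(\beta,b)$ with $\diam(U^i)^\beta/\dist(U^i, U^{\neq i})^\beta \leq c_2\,\sep(U^I)^{-\beta}(\log\sep(U^I))^\beta$, using $\dist(U^i,U^{\neq i}) \geq \sep(U^I) \geq 2m$. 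Substituting this into the last display, summing the inequality over $x: U^i \to V^i$ and then over $i \in \mathcal{I}$, and recognizing $\sum_{x: U^i \to V^i}\P_{U^i}(\mathcal{X}_0 = x) = \P_{U^i}(\C_1 = V^i)$, we obtain
\[
\P_{U^I}(\C_1^I = V^I) \;\geq\; \left(1 - c\,|U|\,\sep(U^I)^{-\beta}(\log\sep(U^I))^\beta\right)^{2m}\sum_{i \in \mathcal{I}}\frac{|U^i|}{|U|}\,\P_{U^i}(\C_1 = V^i),
\]
and the last sum equals $\P_{U^I}(\D_1^I = V^I)$, which is \eqref{eq one step approx}. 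If the parenthesized factor is nonpositive the inequality is trivial, so one may freely assume $c|U|\sep(U^I)^{-\beta}(\log\sep(U^I))^\beta < 1$.

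I expect the main obstacle to be the bookkeeping rather than any single estimate, since Lemma~\ref{lem less simple rat bd} carries the analytic weight: one must verify the set identifications $C_l = C_l^i \cup U^{\neq i}$ and $\bdy C_l^i \subseteq \bdy C_l$ along every good sequence, confirm that these sequences are exactly the ones contributing both to $\P_{U^i}(\C_1 = V^i)$ and to the copy-$i$-confined part of $\P_{U^I}(\C_1^I = V^I)$, and handle the degenerate case $V^I = U^I$, where the ``changed'' copy is not unique and one sums over $\mathcal{I} = I$. The only other point of care is the elementary estimate converting $\diam(U^i)^\beta/\dist(U^i, U^{\neq i})^\beta$, bounded through $\Tup_{2m,b}$ by $b^\beta(\log\dist)^\beta/\dist^\beta$, into the corresponding expression in $\sep(U^I)$, with a constant depending only on $\beta$ and $b$.
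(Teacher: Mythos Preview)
Your proposal is correct and follows essentially the same route as the paper: identify the CopyCAT sequences (the paper calls this set $\cal H$), observe they are also valid CAT sequences producing $V^I$ (the paper phrases this as $\cal G \supseteq \cal H$), apply Lemma~\ref{lem less simple rat bd} with $C_0 = U^i$ and $D = U^{\neq i}$, and then use the $\Tup_{2m,b}$ condition together with the monotonicity of $(\log t)/t$ to pass from $\dist(U^i,U^{\neq i})$ to $\sep(U^I)$. Your explicit parametrization by the index set $\mathcal I$ and the case $V^I = U^I$ is a bit more detailed than the paper's presentation, but the substance is identical.
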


Proposition~\ref{copycat growth imp} is a simple consequence of the Markov property, Proposition~\ref{one step approx}, and the definition of $\mathrm{Growth}_{a, b, \gamma, \delta}$.

\begin{proof}[Proof of Proposition~\ref{copycat growth imp}]
Let $t \geq 0$. By the Markov property and Proposition~\ref{one step approx}, we have
\begin{align*}
\P_{C^I} \left( (\C_s^I)_{s \geq 0} \in \mathrm{Growth}_{a, b, \gamma, \delta; t} \right)
= &\sum \P_{C^I} \left( \C_s^I = D_s^I, s \geq 0 \right)\\ 
\geq w_t & \sum \P_{C^I} \left( \D_s^I = D_s^I, s \geq 0 \right) = w_t \P_{C^I} \left( (\D_s^I)_{s \geq 0} \in \mathrm{Growth}_{a, b, \gamma, \delta; t} \right),
\end{align*}
where the sums range over $(D_s^I)_{s \geq 0} \in \mathrm{Growth}_{a, b, \gamma, \delta; t}$. Here, $w_t = \prod_{s \leq t} \left( 1 - c r_a (s)^{-\beta} (\log r_a (s))^\beta \right)^{2m}$, where $c>0$ is the number of the same name in \eqref{eq one step approx} and $r_a (s) = \max\{a,\gamma s^{1/2-\delta}\}$. Since $\mathrm{Growth}_{a, b, \gamma, \delta; t}$ decreases with $t$, the continuity of $\P_{C^I}$ implies that
\[
\P_{C^I} \left( (\C_s^I)_{s \geq 0} \in \mathrm{Growth}_{a, b, \gamma, \delta} \right) \geq \lim_{t\to\infty} w_t \, \P_{C^I} \left( (\D_s^I)_{s \geq 0} \in \mathrm{Growth}_{a, b, \gamma, \delta} \right).
\]
It is easy to see that, since $\beta > 2$, if $\delta < \frac12 - \frac{1}{\beta}$, then $\sum_{s \geq 0} r_a(s)^{-\beta} (\log r_a (s))^\beta = o_a (1)$, which implies that $\lim_{t\to\infty} w_t = (1 - o_a (1))$.
\end{proof}

Proposition~\ref{one step approx} is an application of Lemma~\ref{lem less simple rat bd}.

\begin{proof}[Proof of Proposition~\ref{one step approx}]
Fix $U^I, V^I \in \Tup_{2m,b}$ and define $\cal G$ and $\cal H$ to be the activation and transport sequences $x \in (\Z^d)^{2m}$ that form $V^I$ from $U^I$ under the CAT and CopyCAT dynamics, respectively:
\begin{align*}
\cal G &:= \left\{x \in (\Z^d)^{2m}: \{\C_0^I = U^I, \cal X_0 = x\} \subseteq \{ \C_1^I = V^I \} \,\, \text{and} \,\, \P_{U^I} (\cal X_0 = x) > 0 \right\},\\ 
\cal H &:= \left\{x \in (\Z^d)^{2m}: \{\D_0^I = U^I, \cal Y_0 = x\} \subseteq \{ \D_1^I = V^I \} \,\, \text{and} \,\, \P_{U^I} (\cal Y_0 = x) > 0 \right\}.
\end{align*}
Observe that $\cal G \supseteq \cal H$ because $V^I$ is separated by more than $m$. Indeed, so long as the clusters are too far apart to intersect after one step of the CopyCAT dynamics, the only difference between $\cal G$ and $\cal H$ is that there are $x \in \cal G \setminus \cal H$ that correspond to activation at more than one cluster.

We calculate
\begin{equation}\label{one step seq1}
\P_{U^I} (\C_1^I = V^I) 
= \sum_{x \in \cal G} \P_{U^I} (\cal X_0 = x) 
\geq \sum_{x \in \cal H} \P_{U^I} (\cal X_0 = x) 
\geq \sum_{x \in \cal H} w_x \, \frac{|U^{i_x}|}{|U|} \, \P_{U^{i_x}} (\cal X_0 = x),
\end{equation}
where $i_x$ is the cluster of $U^I$ to which $x_1$ belongs and 
\[
w_x = \left( 1 - c |U| \frac{\diam (U^{i_x})^\beta}{\dist (U^{i_x}, U^{\neq i_x})^\beta} \right)^{2m},
\]
in terms of the quantity $c$ from Lemma~\ref{lem less simple rat bd}. 
The equality in \eqref{one step seq1} holds by definition, the first inequality by $\cal G \supseteq \cal H$, and the second inequality by Lemma~\ref{lem less simple rat bd} applied with $C_0 = U^{i_x}$ and $D = U^{\neq i_x}$. The use of this lemma requires that $\dist (C_0, D) \geq 2m$ and $\P_{C_0} (\cal X_0 = x) > 0$. The former requirement is satisfied because $U^I$ has separation of at least $2m$, while the latter is satisfied because $\P_{U^I} ( \cal Y_0 = x)$ is positive by virtue of $x \in \cal H$, which implies the same is true of $\P_{C_0} (\cal X_0 = x)$ because 
\[
\P_{U^I} (\cal Y_0 = x) = \frac{|U^{i_x}|}{|U|}  \, \P_{U^{i_x}} (\cal X_0 = x)
\]
by \eqref{copycat def}. By definition, $\P_{U^I} (\D_1^I = V^I) = \sum_{x \in \cal H} \P_{U^I} (\cal Y_0 = x)$, hence by substituting the preceding display into \eqref{one step seq1}, we conclude that
\[
\P_{U^I} (\C_1^I = V^I) \geq \min_{x \in \cal H} w_x \, \P_{U^I} (\D_1^I = V^I).
\]
The claimed bound \eqref{eq one step approx} then follows from 
\[
\min_{x \in \cal H} w_x \geq \min_{j \in I} \left( 1 - c |U| \frac{\big( b \log \dist (U^j, U^{\neq j}) \big)^\beta}{\dist (U^j, U^{\neq j})^\beta} \right)^{2m} \geq \left( 1 - cb^\beta |U| \frac{\big( \log \sep (U^I) \big)^\beta}{\sep (U^I)^\beta} \right)^{2m}.
\]
The first inequality holds because $U^I \in \Tup_{\cdot,b}$, while the second holds because $\dist (U^j,U^{\neq j}) \geq \sep (U^I)$ and $\frac{\log x}{x}$ is decreasing in $x > e$.
\end{proof}

\section{Diameter collapse}\label{sec diameter collapse}

Theorem~\ref{drift} states that, if $n \leq 2m+1$, then the diameter of $\C_n$ is at most $2n^2$ with a probability of at least some $p>0$. 
We prove Theorem~\ref{drift} with an estimate of $\tau_x$, defined for $x \in \Z^d$ as the first time $t$ that the number $N_x (t)$ of elements of $\C_t$ in the ball $\cal B_x (tm+n)$ equals $n$:
\begin{equation}\label{tx and nx}
\tau_x := \inf \{t \geq 0: N_x (t) = n\} \quad \text{where} \quad N_x (t) := | \C_t \cap \cal B_x (tm+n) |.
\end{equation}
When $\{\tau_x \leq m\}$ occurs, the diameter of $\C_n$ satisfies
\begin{equation}\label{tx bd}
\diam (\C_n) \leq 2 (\tau_x m + n) + (n - \tau_x) m = \tau_x m + n(m+2) \leq 2n^2.
\end{equation}
The first inequality follows from the definition of $\tau_x$ and a.m.l.g.\ \eqref{amlg prop}. The second follows from bounding $\tau_x$ by $m$, $m$ by $n-1$, and $2n^2 - n+1$ by $n^2$.

\subsection{Application of Lemma~\ref{add to ball}}

The next proposition states that $\{\tau_x \leq m\}$ typically occurs under $\P_C$ for certain $C \subset_n \Z^d$. The proof is an application of Lemma~\ref{add to ball}. In the context of \eqref{tx bd}, this proves Theorem~\ref{drift} for such $C$.

\begin{proposition}\label{tau m}
Let $C \subset_n \Z^d$ satisfy $|C \cap \cal B_x (n)| \geq m+1$ for some $x \in \Z^d$. If $n \in \llb m+1,2m+1 \rrb$, then there is $p>0$ such that
\begin{equation}\label{eq tau m}
\P_{C} ( \tau_x \leq m) \geq p.
\end{equation}
\end{proposition}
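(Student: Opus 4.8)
The plan is to iterate Lemma~\ref{add to ball} for $m$ steps to build up the count of elements in a growing ball around $x$, until all $n$ elements are captured. The technical complication is that the radius of the ball in the definition of $\tau_x$ grows with $t$ (it is $\cal B_x(tm+n)$), and the radii in Lemma~\ref{add to ball} must match up from one step to the next, so I first set up a careful choice of radii.

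First I would track the quantity $N_x(t) = |\C_t \cap \cal B_x(tm+n)|$ along a good event. The hypothesis gives $|C \cap \cal B_x(n)| \geq m+1$, i.e., $N_x(0) \geq m+1$. Suppose at time $t < m$ we have $N_x(t) \in \llbracket m+1, n-1\rrbracket$; equivalently $|\C_t \cap \cal B_x(tm+n)| \in \llbracket m+1, n-1\rrbracket$, so Lemma~\ref{add to ball} applies to $\C_t$ with the ball $\cal B_x(tm+n)$ (here $r = tm+n \geq n \geq 1$, and $n \leq 2m+1$ by hypothesis). On the event $\AddToBall_{x,tm+n}$ we get $|\C_{t+1} \cap \cal B_x(tm+n+m)| > |\C_t \cap \cal B_x(tm+n)|$, i.e., $N_x(t+1) > N_x(t)$, so $N_x(t+1) \geq N_x(t)+1$. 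Since $N_x$ increases by at least one with each such step and starts at $\geq m+1$, after at most $n-1-(m+1)+1 = n-m-1 \leq m$ successful steps we reach $N_x = n$; the relevant bound is $n - m - 1 \leq (2m+1) - m - 1 = m$. Hence on the intersection of the events $\AddToBall_{x,tm+n}$ over $t = 0, \dots, n-m-2$ (an intersection of at most $m$ events), we have $\tau_x \leq n-m-1 \leq m$.

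Then I would lower-bound the probability of this intersection. By the Markov property, conditioning successively, $\P_C(\tau_x \leq m) \geq \prod_{t=0}^{n-m-2} \inf \P_{\cdot}(\AddToBall_{x,tm+n})$, where each infimum is over configurations that can arise at step $t$ with $N_x(t) \in \llbracket m+1, n-1\rrbracket$. Each factor is bounded below by a constant $p_t > 0$ depending only on the radius $tm+n$, by Lemma~\ref{add to ball}; since there are at most $m$ factors and each radius is at most $m\cdot m + n \leq m(m+2)$, a quantity depending only on $m$ and $n$, the product is bounded below by some $p = p(m,n) > 0$, as claimed. One subtlety to check: after a successful $\AddToBall$ step, $N_x(t+1)$ could in principle equal or exceed $n$ early, in which case $\tau_x \leq t+1 \leq m$ already and we simply stop — the product only gets shorter, so the bound still holds; and if at some intermediate step $N_x(t)$ happens to exceed the range $\llbracket m+1,n-1\rrbracket$ from below it cannot, since it only increases, so the hypothesis of Lemma~\ref{add to ball} is maintained throughout.

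The main obstacle is purely bookkeeping: making sure the nested radii $tm+n$ are exactly the ones for which $\AddToBall_{x,\cdot}$ advances $N_x(t)$ to $N_x(t+1)$ — that is, that the "$+m$" in the conclusion of Lemma~\ref{add to ball} ($\cal B_x(r) \to \cal B_x(r+m)$) is precisely the increment in the $\tau_x$-radius per step — and that $N_x$ stays in the admissible window $\llbracket m+1,n-1\rrbracket$ until it jumps to $n$. Both hold by the arithmetic above, using $n \leq 2m+1$ in an essential way (it is what keeps the number of required steps at most $m$ and keeps Lemma~\ref{add to ball} applicable). No estimate here depends on $\diam(C)$, which is the whole point.
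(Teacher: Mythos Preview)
Your proposal is correct and follows essentially the same approach as the paper: iterate Lemma~\ref{add to ball} along the growing radii $tm+n$, noting that each successful $\AddToBall$ event forces $N_x(t+1)>N_x(t)$ and that $n\leq 2m+1$ caps the number of required increases at $m$. The paper formalizes the random stopping time slightly differently---via a recursion $f_j=g_j(1-r_j+r_jf_{j+1})$ that cleanly absorbs the possibility of early success---whereas you handle it by the informal ``stop early, the product only shortens'' remark; both arguments are sound and yield the same diameter-independent bound.
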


\begin{proof}
For $0 \leq j < k \leq m$, denote by $A_{j,k}$ the event that $N_x (t)$ increases with each step from $t = j$ to $t = k$, i.e., $A_{j,k} = \{N_x (j) < \cdots < N_x (k) \}$. For notational convenience, define $A_{0,0}$ to be the sample space. The key observation is that 
\begin{equation}\label{a0tau}
\P_{C} (\tau_x \leq m \mid \tau_x > 0) \geq \P_{C} (A_{0,\tau_x} \mid \tau_x > 0).
\end{equation}
Indeed, $\cal B_x (n)$ contains at least $m+1$ elements of the $n \leq 2m+1$ elements of $C$ by assumption, so $N_x (t)$ cannot increase more than $n - (m+1) \leq m$ times consecutively.

Lemma~\ref{add to ball} implies that the right hand side of \eqref{a0tau} is at least $q^m$ for some $q > 0$, hence \eqref{eq tau m} holds with $p = q^m$. 
To see why, define $(f_j,g_j,r_j)_{1 \leq j \leq m}$ according to
\begin{align*}
f_j &:= \P_{C} (A_{j-1,\tau_x} \mid \tau_x > j-1, A_{0,j-1}),\\ 
g_j &:= \P_{C} (A_{j-1,j} \mid \tau_x > j-1, A_{0,j-1}),\\ 
r_j &:= \P_{C} (\tau_x > j \mid \tau_x > j-1, A_{0,j-1}),
\end{align*}
and note that $f_1$ is the right hand side of \eqref{a0tau}. It is easy to verify that these quantities solve the following system of equations:
\begin{equation*}
f_m = g_m \quad \text{and} \quad \forall 1\leq j \leq m-1, \quad f_j = g_j (1 - r_j + r_j f_{j+1}).
\end{equation*}
In particular, $f_j \geq g_j f_{j+1}$ for each $j \leq m-1$, regardless of the value of $r_j$. To prove that $f_1 \geq q^m$, it therefore suffices to show that $g_j \geq q$ for each $j \in \llb 1, m \rrb$. Given $\C_j$, when $\{\tau_x > j-1, A_{0,j-1}\}$ occurs, $N_x (j)$ belongs to $\llb m+1,n-1\rrb$, hence $\C_j$ satisfies the hypotheses on $C$ in the statement of Lemma~\ref{add to ball}. Consequently, by applying the Markov property at $t = j$, Lemma~\ref{add to ball} implies that $g_j \geq q$ for some $q > 0$.
\end{proof}

\subsection{Proof of Theorem~\ref{drift}}
We aim to apply Proposition~\ref{tau m}, but its hypotheses stipulate that $C$ must permit an $x \in \Z^d$ for which $|C \cap \cal B_x (n)| \geq m+1$. To satisfy this hypothesis, we apply the Markov property to $t=1$ and work with $\C_1$ in the place of $C$, given that $G = \{\T_0 \subset \cal B_{\T_{0,1}} (n)\}$ occurs. (When $G$ occurs, $\C_1$ satsifies $|\C_1 \cap \cal B_{\T_{0,1}} (n)| \geq m+1$.) Lemma~\ref{poss implies typ} implies that there is $q_1 > 0$ such that $\P_{C} (G) \geq q_1$, hence it suffices to prove that
\begin{equation}\label{c1 instead of c0}
\P_{\C_1} (\diam (\C_{n-1}) \leq 2n^2) \geq q_2 \1_G,
\end{equation}
for some $q_2 > 0$, as then \eqref{eq drift} will hold with $p = q_1 q_2$.

Given $\C_1$, when $G$ occurs, let $x \in \Z^d$ satisfy $|\C_1 \cap \cal B_x (n)| \geq m+1$ and define $N_x (t)$ and $\tau_x$ as in \eqref{tx and nx}. 
By Proposition~\ref{tau m}, there is $q_2 > 0$ such that 
\begin{equation}\label{tau m bd}
\P_{\C_1} (\tau_x \leq m) \geq q_2 \1_G.
\end{equation}
This bound implies \eqref{c1 instead of c0} because, when $\{\tau \leq m\}$ occurs, we have 
\[
\diam (\C_{n-1}) \leq 2(\tau_x m + n) + (n - 1 - \tau_x)m \leq 2n^2.
\]
As in \eqref{tx bd}, the first inequality holds by the definition of $\tau_x$ and \eqref{amlg prop}, while the second holds because $\tau_x \leq m \leq n-2$ and $2n^2 - 5n + 6 \leq 2n^2$.

\section{Diameter growth}\label{sec diameter growth}

\subsection{Proof of Theorem~\ref{growth}}
Recall that for $a,b,\gamma > 0$ and $\delta \in (0,\frac12)$ the set $\Growth_{a,b,\gamma,\delta}$ consists of $(C_t^I)_{t \geq 0} \in \Tup_{a,b}^\N$ such that
\[
\forall t \geq 0, \quad \sep (C_t^I) \geq \gamma t^{\frac12 - \delta}.
\]
This section proves that it is typical for CopyCAT to exhibit this kind of separation growth, so long as the initial partition is sufficiently separated.

\begin{proposition}\label{d growth}
Let $C \subset_n \Z^d$ for $n \geq 2m+2$. There are $b,\gamma > 0$ such that, if $C \in \Stable_a$ for sufficiently large $a \in \Z_{\geq 0}$, then for every $\delta \in (0,\frac12)$, 
\[
\P_{C^I} \left( (\D_t^I)_{t \geq 0} \in \Growth_{\tilde a,b,\gamma,\delta}\right) \geq \frac14,
\]
where $\tilde a = a^{0.99}$ and $C^I$ is any partition of $C$ into $a$-separated line segments that satisfy \eqref{reach well sep conds}.\footnote{Note that $C$ must have such a partition because $C \in \Stable_a$.}
\end{proposition}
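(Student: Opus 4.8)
The plan is to exploit the special structure of CopyCAT that makes Proposition~\ref{d growth} tractable. Because CAT conserves numerosity, every $\D^i_t$ keeps its initial cardinality $n_i := |V^i| \in \llbracket m+1, 2m+1 \rrbracket$ for all $t$, so the selection probabilities $\P(J_t = i \mid \D_t) = n_i/n$ are \emph{constant in time}; hence $(J_t)_{t \geq 1}$ is an i.i.d.\ sequence, independent of the rest of the randomness, and, conditionally on it, the clusters $(\D^i_\cdot)_{i \in I}$ evolve \emph{independently}, with $\D^i_\cdot$ a CAT chain on $n_i$ elements sampled at the times $\{t : J_t = i\}$, a set of density $n_i/n$ by the law of large numbers. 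Since $n_i \leq 2m+1 < 2m+2$, the translation-class shape chain of cluster $i$ is positive recurrent with a unique stationary law $\pi_i$ (Proposition~\ref{pos rec}), which, by uniqueness, is invariant under the isometry group of $\Z^\dim$. Thus $n \geq 2m+2$ enters \emph{only} by forcing any admissible partition to have two or more parts, each of which, having at most $2m+1$ elements, is individually in the collapsing regime.

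Next I would extract two single-cluster inputs, both driven by diameter collapse (Theorem~\ref{drift}) and a.m.l.g.\ \eqref{amlg prop}. \emph{(a) Exponential diameter tails.} There are $C>0$ and $\rho\in(0,1)$, independent of the (small) starting configuration, with $\P(\diam(\C_k) \geq h) \leq C\rho^{h}$ for all $k,h$, because reaching diameter $h$ forces $\gtrsim h/m$ consecutive steps without a collapse, and each window of $n$ steps collapses with probability $\geq p$. \emph{(b) A diffusive invariance principle for the centre of mass.} Writing $\bar X^i_k \in \tfrac{1}{n_i}\Z^\dim$ for the centre of mass of cluster $i$ after $k$ CAT steps, the displacements $\bar X^i_k - \bar X^i_{k-1}$ form an additive functional of the positive recurrent shape chain; each is bounded by $O(\diam(\C_k))$, which has all moments by (a), and its $\pi_i$-mean vanishes because $\pi_i$ is invariant under $x \mapsto -x$. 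Hence $\big(\bar X^i_{\lfloor ku\rfloor}/\sqrt{k}\big)_{u \geq 0} \Rightarrow (\sigma_i B_u)_{u\geq0}$ for a standard $\dim$-dimensional Brownian motion $B$, and $\sigma_i^2 > 0$: a full excursion of the shape chain from the line $L$ back to $L$ has a centre-of-mass displacement that is mean-zero --- reflecting the excursion through the centre of $L$ negates it and preserves its probability --- and nonconstant --- irreducibility (Proposition~\ref{irred}) supplies a positive-probability excursion that translates $L$ --- so of positive variance. The time change $N^i_t \sim (n_i/n)\, t$ converts this into an invariance principle in the original time parameter.

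With (a) and (b) in hand, Proposition~\ref{d growth} reduces to a statement about finitely many independent, non-degenerate, transient $\dim$-dimensional walks --- where $\dim \geq 3$ is essential. From $C^I \in \Stable_a$ I want a lower bound on the probability that, for \emph{all} $t$: \textup{(i)} $\dist(\D^i_t, \D^{\neq i}_t) \geq \tilde a = a^{0.99}$; \textup{(ii)} $\sep(\D^I_t) \geq \gamma t^{1/2-\delta}$; and \textup{(iii)} $\diam(\D^i_t) \leq b\log \dist(\D^i_t, \D^{\neq i}_t)$. For \textup{(i)}, the difference of two cluster centres of mass is a transient $\dim$-walk started at distance $\geq a$, so the probability it ever comes within $2a^{0.99}$ of the other is $O\big((a^{0.99}/a)^{\dim-2}\big) \to 0$; absorbing the $O(\log)$ cluster diameters and taking a union bound over the finitely many pairs settles it for large $a$. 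For \textup{(iii)}, on the event in \textup{(i)} one has $\dist(\D^i_t, \D^{\neq i}_t) \geq \max\{a^{0.99}, \gamma t^{1/2-\delta}\}$, so by (a) the probability \textup{(iii)} fails somewhere is at most $\sum_{t \geq 0} C\,\max\{a^{0.99},\gamma t^{1/2-\delta}\}^{-b\log(1/\rho)}$, which is finite and vanishes as $a \to \infty$ once $b$ is large (the $\mathrm{poly}(a)$-many early terms are crushed by the $a^{-0.99\,b\log(1/\rho)}$ factor, the tail by convergence of $\sum t^{-\alpha}$ with $\alpha > 1$). The crux is \textup{(ii)}: rescaling space by $a$ and time by $a^2$, each pairwise separation behaves like a transient $\dim$-walk $W$ with $\|W_0\| \geq 1$, and \textup{(ii)} becomes $\|W_u\| \geq \gamma a^{-2\delta} u^{1/2-\delta}$ for all $u \geq 0$; since $W$ (in $\dim\geq3$) avoids the origin and, by the Dvoretzky--Erd\H{o}s lower-function test, $\|W_u\|/u^{1/2-\delta} \to \infty$, the infimum $\inf_{u \geq 0}\|W_u\|/u^{1/2-\delta}$ is an a.s.-positive random variable, so \textup{(ii)} holds with probability $\geq 1-\epsilon$ once $\gamma a^{-2\delta}$ is small, i.e.\ once $a$ is large. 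A union bound over \textup{(i)}--\textup{(iii)} keeps the failure probability below $3/4$ for $a$ large, which is the claim.

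I expect the main obstacle to be part \textup{(b)}: running the functional CLT for the centre-of-mass walk with a genuinely \emph{non-degenerate} (and, by symmetry, isotropic) Brownian limit --- the degeneracy $\sigma_i^2 = 0$ has to be excluded, which is the point of the excursion argument --- and then transferring the Brownian escape estimate of \textup{(ii)} to the discrete walks \emph{uniformly over the infinite time horizon}. The transfer admits two routes: a strong (Skorokhod or KMT) embedding of each cluster's centre-of-mass walk into a Brownian motion with controlled error, or an elementary dyadic bootstrap that bypasses Dvoretzky--Erd\H{o}s --- on each scale $[2^k, 2^{k+1}]$, a maximal inequality for the (martingale-plus-coboundary) centre-of-mass process confines the displacement to $O(2^{k/2})$, and isotropy together with transience propagates the separation lower bound from scale $k$ to scale $k+1$ with a failure probability summable in $k$. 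Either way, the diameter tails of \textup{(a)} --- hence positive recurrence of the shape chain --- must be maintained along the whole trajectory.
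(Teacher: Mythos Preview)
Your approach is correct in outline and genuinely different from the paper's. Both exploit the key structural fact you identify---that under CopyCAT the clusters evolve as independent CAT chains, each in the collapsing regime $n_i \leq 2m+1$---and both reduce the separation growth to a statement about transient $d$-dimensional walks. The difference lies in how the ``walk'' is extracted.

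You track the centre of mass $\bar X^i$ continuously and invoke an additive-functional CLT (martingale plus coboundary) to obtain diffusive behaviour; this is conceptually clean but, as you correctly flag, the passage from ``converges to Brownian motion'' to uniform-in-time escape estimates requires either a strong coupling or a dyadic bootstrap, and the non-degeneracy $\sigma_i^2>0$ has to be argued separately. The paper instead sidesteps the CLT by a renewal trick: it views the clusters only at the successive times $\xi_l$ at which \emph{all} clusters are simultaneously line segments in $\cal L$. At these times the shape tuple is deterministic (line segments of the fixed lengths $n_i$), so the increments $(M^i_{\xi_{l+1}} - M^j_{\xi_{l+1}}) - (M^i_{\xi_l} - M^j_{\xi_l})$ of the ``least-element'' representatives are genuinely i.i.d., and off-the-shelf random-walk estimates (exit-time tails and hitting probabilities for balls, from Lawler--Limic) apply directly to the resulting walk $S^{ij}_k$. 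The renewal gaps $\xi_{l+1}-\xi_l$ are controlled by exactly the diameter-collapse input you isolate in your part (a), yielding $\xi_l-\xi_{l-1}\lesssim\log l$ with high probability; this lets the paper transfer the random-walk bounds back to the cluster separation at \emph{all} times $t$, not just at the $\xi_l$, via a simple inequality (Proposition~\ref{lem dist bd}) rather than an invariance principle.

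What each approach buys: the paper's renewal construction is more elementary---no functional CLT, no embedding, just i.i.d.\ increments---at the price of the somewhat ad hoc device of waiting for all clusters to be lines and then tracking three time-scales ($t$, $\xi_k$, and doubling times $\eta_l$). Your route is more robust (it would work for any representative point and any recurrent shape chain with the right symmetry, without a distinguished regeneration state) but carries exactly the technical overhead you anticipate in your final paragraph. Your irreducibility-based argument for $\sigma_i^2>0$ parallels the paper's proof that the renewal walk $S^{ij}$ is irreducible on $\Z^d$.
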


We use $a^{0.99}$ in the place of $a$ because it is typical for the separation to temporarily decrease below $a$ by a constant factor of $a$, which we can absorb as $a^{0.01}$ for sufficiently large $a$.

Theorem~\ref{growth} follows from Theorem~\ref{reach well sep}, the key approximation result (Proposition~\ref{copycat growth imp}), and Proposition~\ref{d growth}.

\begin{proof}[Proof of Theorem~\ref{growth}]
Let $C \subset_n \Z^d$ for $n \geq 2m+2$, let $b$ and $\gamma$ be the numbers of the same name in Proposition~\ref{d growth}, let $\delta \in (0,\frac12 - \frac{1}{\beta})$, and let $a \in \Z_{\geq 2m}$, $\tilde a = a^{0.99}$, and $s = 2n^2 a$. By the Markov property applied to time $s$,
\[
\P_{C} \left( \diam (\C_t) \geq \gamma (t- s)^{\frac12 - \delta} \,\,\text{for every $t \geq s$} \right)  
\geq \E_C \left[ \P_{\C_s^I} \left( (\C_t^I)_{t \geq 0} \in \Growth_{\tilde a,b,\gamma,\delta} \right) \1_{\{\C_s \in \Stable_{a}\}} \right],
\]
where, given $\C_s \in \Stable_{a}$, $\C_s^I$ is any partition of $\C_s$ that satisfies \eqref{reach well sep conds}. By Propositions \ref{copycat growth imp} and \ref{d growth}, if $a$ is sufficiently large, then 
\begin{equation}\label{eigth bd}
\P_{D^I} \left( (\C_t^I)_{t \geq 0} \in \mathrm{Growth}_{\tilde a,b,\gamma,\delta} \right) \geq \frac12 \, \P_{D^I} \left( (\D_t^I)_{t \geq 0} \in \mathrm{Growth}_{\tilde a,b,\gamma,\delta} \right) \geq \frac18,
\end{equation}
for every partition $D^I$ of $D \in \Stable_{a}$ that satisfies \eqref{reach well sep conds}. Note that we need $D^I \in \Tup_{\tilde a,b}$ to justify the first inequality with Proposition~\ref{copycat growth imp}. This holds when $\tilde a > e^{n/b}$ because $D^I \in \Tup_{\tilde a, \tilde b}$ for $\tilde b = \frac{n}{\log \tilde a}$ by \eqref{reach well sep conds} and $\Tup_{\tilde a, \tilde b} \subseteq \Tup_{\tilde a,b}$ by \eqref{tup def}.

By the preceding bounds and Theorem~\ref{reach well sep}, there is $p>0$ such that
\[
\P_{C} \left( \diam (\C_t) \geq \gamma (t- s)^{\frac12 - \delta}, \, \, t \geq s \right) \geq \frac18 \, \P_C (\C_s \in \Stable_a) \geq \frac18 \, p^a.
\]
Although we assumed $\delta < \frac12 - \frac{1}{\beta}$ to apply Proposition~\ref{copycat growth imp}, this bound implies that \eqref{eq growth} holds for every $\delta \in (0,\frac12)$.
\end{proof}

In the rest of this section, we will prove Proposition~\ref{d growth} by analyzing random walks that will arise as the differences of elements that are representative of two clusters, evolving according to the CopyCAT dynamics and viewed at a sequence of renewal times. Standard estimates for random walk will imply that if these representatives are sufficiently separated, then their separation will grow as $\Omega(t^{1/2-\delta})$ with high probability in the initial separation. 

\subsection{Random walks associated to pairs of clusters}

Given $\D_t^I$, for each $i \in I$ and $t \geq 0$, we will represent $\D_t^i$ by $M_t^i$, defined as the element of $\D_t^i$ that is least in the lexicographic order on $\Z^d$.\footnote{We would not benefit from representing clusters by, e.g., their centers of mass, as the clusters will have diameters that are small relative to their separation. For this reason, elements will be equally representative of the clusters to which they belong and, unlike centers of mass, they will necessarily belong to $\Z^d$, which will be convenient.} We will view each $M_t^i$ at the consecutive times at which all clusters form line segments parallel to $e_1$, i.e., sets in $\cal L$. Define this sequence of times by
\begin{align*}
\xi_0 &:= \inf \big\{ t \geq 0: \D_t^i \in \cal L \,\,\text{for every $i \in I$} \big\}, \quad \text{and}\\ 
\forall l \geq 1, \quad \xi_l &:= \inf \big\{ t > \xi_{l-1}: \D_t^i \in \cal L  \,\,\text{for every $i \in I$}\big\}.
\end{align*}

We will study each pair of distinct clusters $i,j \in I$ separately, through the random walk $(S_k^{ij})_{k \geq 0}$, defined according to
\begin{equation}\label{rw def}
S_k^{ij} := \left( M_{\xi_0}^i - M_{\xi_0}^j \right) + \sum_{l=1}^k \left( M_{\xi_l}^i - M_{\xi_l}^j  - M_{\xi_{l-1}}^i + M_{\xi_{l-1}}^j\right). 
\end{equation}
In part to ensure that $\xi_0 \equiv 0$, we will study $(S_k^{ij})_{k \geq 0}$ under $\P_{D^I}$ for tuples $D^I \in \Tup$ that satisfy
\begin{equation}\label{stable def}
|I| \geq 2 \quad \text{and} \quad \forall i \in I, \quad D^i \in \cal L \quad \text{and} \quad |D^i| \in \llbracket m+1, 2m+1 \rrbracket.
\end{equation}

\begin{proposition}\label{wm is rw}
If $D^I\in \Tup$ satisfies \eqref{stable def}, then, for every distinct $i, j \in I$, the distribution of $(S_k^{ij})_{k \geq 0}$ under $\P_{D^I}$ is that of a symmetric, aperiodic, and irreducible random walk on $\Z^d$.
\end{proposition}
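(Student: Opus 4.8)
The plan is to observe first that the sum in \eqref{rw def} telescopes, so that $S_k^{ij} = M_{\xi_k}^i - M_{\xi_k}^j$, and that $\xi_0 = 0$ because $D^I$ consists of line segments parallel to $e_1$ by \eqref{stable def}. Writing $n_i := |D^i|$, recall that CopyCAT conserves each $n_i \in \llbracket m+1,2m+1\rrbracket$ and conserves $|I|\geq2$. I would then verify, in order: (1) the renewal times $\xi_l$ are a.s.\ finite, so $(S_k^{ij})_{k\geq0}$ is well defined; (2) its increments are i.i.d., so it is a random walk on $\Z^d$ started at $M_0^i-M_0^j$; (3) the step law is symmetric; and (4) it is aperiodic with support generating $\Z^d$.

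For \emph{Step 1} I would introduce the ``shape'' chain $\Psi_t := (\wt{\D_t^i})_{i\in I}$ obtained from CopyCAT by quotienting each cluster by translation. Since $L_{1,n_i}\in\Prog_{m,n_i}$ (its $m$ elements furthest in the direction $e_1$ satisfy \eqref{seq boundary}), $\Psi_t$ lives on $\prod_{i\in I}\wtProg_{m,n_i}$, where it is irreducible by Proposition~\ref{irred} (update one cluster at a time). I would show it is positive recurrent via Foster--Lyapunov (Proposition~\ref{fl}) with $f=\sum_{i\in I}\diam(\cdot)$ and a large constant horizon $g\equiv G$: because the $n_i$ are fixed, the clock selects cluster $i$ at each step with the fixed probability $n_i/\sum_j n_j$ independently of the CAT moves, so with probability tending to $1$ as $G\to\infty$ every cluster is updated at least $n_i$ times within $G$ steps; on that event Theorem~\ref{drift}, applied to each cluster separately, forces $\diam(\D^i)$ below $2n_i^2$ after that cluster's first $n_i$ updates with probability bounded below independently of initial diameters, and \eqref{amlg prop} caps the subsequent regrowth by $mG$. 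This yields a drift condition $\E_\Psi[f(\Psi_G)]\leq f(\Psi)-G$ outside the finite set $\{f\leq b\}$ for $b$ large depending on $G$. Positive recurrence of $\Psi_t$ then forces CopyCAT to return to the all-line-segments state infinitely often, so $\xi_l<\infty$ for every $l$, a.s.

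For \emph{Step 2}, at each $\xi_k$ the state $\D_{\xi_k}^I$ satisfies \eqref{stable def} with the same size tuple $(n_i)_{i\in I}$, hence differs from $D^I$ only by an independent translation of each cluster; since CAT is translation invariant, the clusters evolve without interaction under CopyCAT, and the clock is translation invariant, the conditional law of $\bigl(\xi_{k+1}-\xi_k,\,(M_{\xi_{k+1}}^i-M_{\xi_k}^i)_{i\in I}\bigr)$ given $\sigma(\D_s^I:s\leq\xi_k)$ is, by the strong Markov property at $\xi_k$, a function of the size tuple alone. Hence these vectors are i.i.d.\ over $k$, so their coordinate differences $S^{ij}_{k+1}-S^{ij}_k$ are i.i.d.\ and $(S^{ij}_k)_{k\geq0}$ is a random walk on $\Z^d$. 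For \emph{Step 3} I would use the lattice isometries $R_l$ negating the $l$th coordinate: CAT, hence CopyCAT together with the renewal times (note $\cal L$ is $R_l$-invariant), is $R_l$-equivariant, and $R_l$ sends a segment in $\cal L$ of length $n_i$ to another such segment whose lex-least element is $R_l$ applied to the original one, up to a translation depending only on $l$ and $n_i$ (null for $l\geq2$, equal to $-(n_i-1)e_1$ for $l=1$); these translations cancel in the increment $\Delta:=S_1^{ij}-S_0^{ij}$, and combining this with the translation invariance of Step 2 (so that reflecting $D^I$ leaves the increment law unchanged) gives $\Delta\stackrel{d}{=}R_l\Delta$ for each $l$, whence $\Delta\stackrel{d}{=}-\Delta$. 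For \emph{Step 4}, a single CopyCAT step may update cluster $i$ by the ``do-nothing'' move that activates $m$ elements of $L_{1,n_i}$ and transports them back to the same sites (positive probability, as in the proof of Proposition~\ref{aper}), giving $\P_{D^I}(\xi_1=1,\ \Delta=0)>0$ and hence aperiodicity; and one realizes $\Delta=\pm e_1$ by a one-step cycle that rebuilds $L_{1,n_i}$ shifted by $\pm e_1$ after activating its leftmost or rightmost $m$ elements, and $\Delta=e_l$ for $l\geq2$ by a two-step cycle (updating only cluster $i$) that first carries $L_{1,n_i}$ to a segment perpendicular to $e_1$ (hence not in $\cal L$, so the cycle does not end prematurely) and then back to $L_{1,n_i}+e_l$; since $\{\pm e_l\}_{l=1}^d$ generates $\Z^d$, the walk is irreducible.

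I expect \emph{Step 1} to be the main obstacle. For a single cluster the analogue is immediate from the positive recurrence of CAT (Proposition~\ref{pos rec}), but here the renewal requires all clusters to be line segments \emph{simultaneously}, so one genuinely needs recurrence of the coupled shape chain $\Psi_t$, and the delicate point is controlling the interplay between the i.i.d.\ clock and the per-cluster diameter-collapse estimate of Theorem~\ref{drift}. Steps 2--4 are then routine given the strong Markov property and the Euclidean symmetries of the dynamics.
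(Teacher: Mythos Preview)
Your approach is considerably more thorough than the paper's own proof, which is a three-sentence sketch: it asserts the random-walk structure and symmetry from translation invariance, gives a one-line aperiodicity argument, and then exhibits an explicit five-step construction for irreducibility. Your Step~1 (finiteness of $\xi_l$) is not addressed there at all---it is deferred to Proposition~\ref{exp tail}, whose argument is much lighter than your Foster--Lyapunov construction: since each $n_i\leq 2m+1$, Proposition~\ref{cor form line} forms each cluster into a line segment in $n_i+2$ of its own updates with probability bounded below, and the clock visits every cluster that many times within a fixed horizon with positive probability. Your Steps~2 and~3 supply details the paper omits; in particular, your reflection argument for symmetry is a genuine improvement over the paper's bare appeal to translation invariance, which by itself yields identically distributed increments but not $\Delta\stackrel{d}{=}-\Delta$.

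There is, however, a concrete gap in Step~4. Your two-step cycle for $\Delta=e_l$ with $l\geq 2$ cannot work when $n_i=2m+1$: since $L_{1,n_i}$ and $L_{1,n_i}+e_l$ are disjoint, realising the shift requires relocating all $2m+1$ elements, whereas two CAT steps change at most $2m$ positions (the symmetric-difference triangle inequality gives $2n_i\leq 4m$). Separately, carrying $L_{1,n_i}$ to a perpendicular segment in one step already fails for every $n_i\geq m+2$, since at least two elements remain on the $e_1$-axis after activating only $m$. The paper sidesteps both issues with a five-step construction: first perturb cluster $j$ out of $\cal L$ so that $\xi_1$ cannot be reached prematurely, then spend three steps on cluster $i$ to reach $L_{1,n_i}+e_l$ via the intermediate $\{e_1\}\cup(L_{1,n_i-1}+e_l)$, then restore cluster $j$. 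Your idea of touching only cluster~$i$ is cleaner and does work if you allow three steps instead of two (move $m$ elements up by $e_l$, then the next $m$, then the last one together with $m-1$ returns), taking care that the two intermediates are not in $\cal L$; the perturbation of cluster $j$ is then unnecessary.
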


\begin{proof}
First, $S_k^{ij}$ is a symmetric random walk because the CopyCAT transition probability is translation invariant. Second, it is aperiodic because for each $t \geq 0$ every cluster of $\D_t^I$ belongs to $\Prog_m$, $\P_{D^I}$--a.s. Hence, by the progressive boundary property \eqref{seq boundary}, we have $\P_{D^I} (\D_1 = \D_0) > 0$, which implies $\P_{D^I} (S_1^{ij} = S_0^{ij}) > 0$. Third, concerning irreducibility, it suffices to show that
\[
\forall l \in \llbracket 1, d \rrbracket, \quad \P_{D^I} \left( \xi_1 = 5, \D_5^i = \D_0^i + e_l, \D_5^j = \D_0^j \right) > 0,
\]
since the occurrence of the event in question implies that $S_1^{ij} = S_0^{ij} + e_l$. We can realize this event in the following way. 
Let $l \in \llbracket 1,d \rrbracket$ and suppose w.l.o.g.\ that $\D_0^i = L_{1,n_i}$, where $n_i = |\D_0^i|$. In the first step, we require that $\D_1^j \notin \cal L$, so that we do not accidentally reach $\xi_1$ before $t = 5$. Then, in the next two steps, we form $\D_1^i = \D_0^i$ into $\{e_1\} \cup (L_{1,n_i - 1} + e_l)$, which is possible because $n_i \leq 2m+1$ by assumption. We reach $\D_4^i = L_{1,n_i} + e_l = \D_0^i + e_l$ in the fourth step and, in the fifth step, we reverse the first. This sequence of steps results in $\xi_1 = 5$ and $S_1^{ij} = S_0^{ij} + e_l$. 
\end{proof}

The next result shows that the norm of the random walk associated to a pair of clusters is a proxy for their separation, as long as the time between consecutive steps is relatively small. We analyze the time between consecutive renewals in the next section. 

\begin{proposition}\label{lem dist bd}
Let $l \geq 1$, let $t \in \llbracket \xi_{l-1}, \xi_l \rrbracket$, and let $D^I \in \Tup$ satisfy \eqref{stable def}. Then, 
\begin{equation}\label{dist bd}
\dist (\D_t^i, \D_t^j) \geq \big\| S_{l-1}^{ij} \big\| - O \left( (\xi_l - \xi_{l-1})^2 \right)
\end{equation}
for every pair of distinct $i,j \in I$, $\P_{D^I}$--a.s.
\end{proposition}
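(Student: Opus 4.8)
The plan is to bound the displacement of each cluster's representative $M^i$ between a renewal time $\xi_{l-1}$ and any later time $t \le \xi_l$, and to compare this with the random walk value $S^{ij}_{l-1}$, which is exactly the difference of the representatives at the renewal time $\xi_{l-1}$. First I would note that, by the a.m.l.g.\ property \eqref{amlg prop} applied to the lifted CAT dynamics (equivalently, CopyCAT restricted to one cluster at a time), each cluster $\D^i_t$ can move by at most $m$ per step, so its diameter and the position of any fixed representative drift by at most $m$ per step; over $\xi_l - \xi_{l-1}$ steps this is $O(\xi_l - \xi_{l-1})$. This already gives $\|M^i_t - M^i_{\xi_{l-1}}\| = O(\xi_l - \xi_{l-1})$ and similarly for $j$, hence $\dist(\D^i_t,\D^j_t) \ge \|M^i_t - M^j_t\| - \diam(\D^i_t) - \diam(\D^j_t) \ge \|S^{ij}_{l-1}\| - O(\xi_l - \xi_{l-1}) - \diam(\D^i_t) - \diam(\D^j_t)$.

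The remaining point is to control $\diam(\D^i_t)$ for $t \in \llbracket \xi_{l-1}, \xi_l\rrbracket$ by $O((\xi_l - \xi_{l-1})^2)$ rather than merely $O(\xi_l - \xi_{l-1})$; the quadratic bound in \eqref{dist bd} suggests the author wants a cruder but self-contained estimate. At time $\xi_{l-1}$ every cluster is a line segment in $\cal L$ with $|\D^i_{\xi_{l-1}}| \le 2m+1$ by \eqref{stable def}, so $\diam(\D^i_{\xi_{l-1}}) \le 2m$. Over the subsequent $k := \xi_l - \xi_{l-1}$ steps, a.m.l.g.\ gives $\diam(\D^i_t) \le \diam(\D^i_{\xi_{l-1}}) + mk \le 2m + mk = O(k)$. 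Combining, $\dist(\D^i_t, \D^j_t) \ge \|S^{ij}_{l-1}\| - O(k) = \|S^{ij}_{l-1}\| - O(\xi_l - \xi_{l-1})$, which in fact yields a \emph{linear} error term; since $(\xi_l - \xi_{l-1}) \le (\xi_l - \xi_{l-1})^2$ whenever $\xi_l - \xi_{l-1} \ge 1$ (and the case $\xi_l = \xi_{l-1}$, i.e.\ $t = \xi_{l-1}$, is immediate since then $\dist(\D^i_t,\D^j_t) \ge \|S^{ij}_{l-1}\| - 2\cdot 2m$), the claimed quadratic bound \eqref{dist bd} follows a fortiori. I would also observe that all these inequalities hold $\P_{D^I}$--a.s.\ because a.m.l.g.\ \eqref{amlg prop} holds a.s., and because $\D^I$ satisfying \eqref{stable def} forces $\xi_0 = 0$ so that the random walk is well defined from the outset.

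The main obstacle, such as it is, is bookkeeping rather than conceptual: one must be careful that $M^i_t$ is the lexicographically least element of $\D^i_t$, which can jump discontinuously as the cluster reshapes, so the bound $\|M^i_t - M^i_{\xi_{l-1}}\| = O(k)$ should be argued via $\|M^i_t - M^i_{\xi_{l-1}}\| \le \dist$-type reasoning through the intermediate configurations, i.e.\ $M^i_t \in \overline{\D^i_{t-1}} \cup \cdots$ and the clusters of $\D^i$ at consecutive times overlap or are within distance $m$, so the lexicographically least elements are within $O(1)$ plus the cumulative diameter growth of $O(k)$. An alternative, cleaner route is to avoid tracking $M^i$ directly and instead write $\dist(\D^i_t,\D^j_t) \ge \dist(\D^i_{\xi_{l-1}}, \D^j_{\xi_{l-1}}) - O(k)$ from a.m.l.g.\ (each cluster moves $O(k)$), then relate $\dist(\D^i_{\xi_{l-1}},\D^j_{\xi_{l-1}})$ to $\|M^i_{\xi_{l-1}} - M^j_{\xi_{l-1}}\| = \|S^{ij}_{l-1}\|$ up to the $O(m)$ diameters of the two segments at the renewal time. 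Either way the estimate is elementary once a.m.l.g.\ is invoked.
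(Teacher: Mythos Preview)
Your alternative route is correct and in fact yields a stronger bound than the paper claims: since each CopyCAT step updates at most one cluster and every element of $\D_{s+1}^{i'}$ lies within distance $m$ of $\D_s^{i'}$, one has $\dist(\D_{s+1}^i,\D_{s+1}^j) \geq \dist(\D_s^i,\D_s^j) - m$, and iterating gives $\dist(\D_t^i,\D_t^j) \geq \|S_{l-1}^{ij}\| - O(\xi_l - \xi_{l-1})$, a linear error term. The quadratic bound \eqref{dist bd} follows a fortiori, as you say.

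Your main route, however, has a genuine gap that you partially recognize but do not quite resolve. The claim that $\|M_t^i - M_{\xi_{l-1}}^i\| = O(\xi_l - \xi_{l-1})$ does not follow from the reasoning you give: the lexicographically least element can jump by as much as $\diam(\D_s^i) + m$ in a single step, not merely $m$. The paper takes exactly this route---a telescoping sum $\sum_s \|M_{s+1}^i - M_s^i\|$---but bounds each summand by $\diam(\D_s^i) + m \leq (\xi_l - \xi_{l-1}) + |D^i| + m$, so that summing $O(\xi_l - \xi_{l-1})$ terms of size $O(\xi_l - \xi_{l-1})$ produces the quadratic $O((\xi_l - \xi_{l-1})^2)$. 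This is precisely why the paper states a quadratic rather than a linear bound: it tracks the representatives $M^i$ rather than the cluster-to-cluster distance, and pays for the fact that $M^i$ can move on the scale of the cluster diameter per step.

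In short: your alternative route is both simpler and sharper than the paper's argument, avoiding the representative-tracking entirely; your main route coincides with the paper's strategy but misstates the per-step drift of $M^i$, and correcting it would recover only the quadratic bound the paper proves.
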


\begin{proof}
We will show that
\begin{equation}\label{m diff bd}
\big\| (M_t^i - M_t^j) - (M_{\xi_{l-1}}^i - M_{\xi_{l-1}}^j) \big\| \lesssim (\xi_l - \xi_{l-1})^2.
\end{equation}
Noting that $\|S_{l-1}^{ij} \| = \big\| M_{\xi_{l-1}}^i - M_{\xi_{l-1}}^j \big\|$, the claimed bound \eqref{dist bd} then follows from the triangle inequality, \eqref{amlg prop}, and \eqref{m diff bd}:
\begin{align*}
\dist (\D_t^i, \D_t^j) &\geq \| M_t^i - M_t^j\| - \diam (\D_t^i) - \diam (\D_t^j)\\ 
&\geq \| M_t^i - M_t^j\| - 2(\xi_l - \xi_{l-1}) - |D^i| - |D^j|\\ 
&\geq \big\| M_{\xi_{l-1}}^i - M_{\xi_{l-1}}^j \big\| - O \left( (\xi_l - \xi_{l-1})^2 \right).
\end{align*}

To show \eqref{m diff bd}, we begin by writing the difference as a telescoping sum and applying the triangle inequality
\[
\big\| (M_t^i - M_t^j) - (M_{\xi_{l-1}}^i - M_{\xi_{l-1}}^j) \big\| \leq \sum_{s=\xi_{l-1}}^{t-1} \left( \big\| M_{s+1}^i - M_s^i \big\| + \big\| M_{s+1}^j - M_s^j \big\| \right).
\]
It is easy to see that $\| M_{s+1}^i - M_s^i\|$ is at most $\diam (\D_s^i) + m$, so \eqref{amlg prop} implies that
\[
\| M_{s+1}^i - M_s^i\| \leq (\xi_l-\xi_{l-1}) + \diam (\D_{\xi_{l-1}}^i) + m,
\]
for $s \in \llbracket \xi_{l-1},\xi_l \rrbracket$. 
Since $\D_{\xi_{l-1}}^i$ is a line segment of length $|D^i|$, it has a diameter of $|D^i|$ and, as $\xi_l - \xi_{l-1} \geq 1$, we can simply bound $\|M_{s+1}^i - M_s^i\|$ by $(|D^i|+m+1) (\xi_l - \xi_{l-1})$. Applying this to the preceding sum, we conclude
\[
\big\| (M_t^i - M_t^j) - (M_{\xi_{l-1}}^i - M_{\xi_{l-1}}^j) \big\| \leq (|D^i| + |D^j| + 2m + 2) (\xi_l - \xi_{l-1})^2.
\]
\end{proof}

\subsection{Renewal time estimates}

The main result of this subsection states that it is typical for $\xi_l - \xi_{l-1}$ to grow like $\log l$, in the following sense.

\begin{proposition}\label{prop short rets}
There are $\alpha_1, \alpha_2 \in \R_{>0}$ such that, if $D^I \in \Tup$ satisfies \eqref{stable def}, then
\begin{equation}\label{short rets}
\P_{D^I} \left( \cap_{l=1}^\infty F_l \right) \geq \frac12,
\end{equation}
where $F_l := \{\xi_l - \xi_{l-1} \leq \alpha_1 \log (\alpha_2 l)\}$.
\end{proposition}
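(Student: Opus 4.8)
The plan is to reduce the statement, via the strong Markov property at the renewal times, to a geometric tail bound for a single renewal interval, and then to prove that tail bound by an argument that is agnostic of cluster diameters: first collapse every cluster into a line segment, then keep all clusters in line-segment form until a common deterministic time.

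\emph{Reduction.} Because CopyCAT preserves the number of elements in each cluster, and because every cluster of $\D_{\xi_{l-1}}^I$ lies in $\cal L$ by the definition of $\xi_{l-1}$, the state $\D_{\xi_{l-1}}^I$ again satisfies \eqref{stable def}; in particular the process restarted there has $\xi_0 \equiv 0$, so $\xi_l - \xi_{l-1}$ has, given $\F_{\xi_{l-1}}$, the law of $\xi_1$ under $\P_{\D_{\xi_{l-1}}^I}$. Writing $L_l := \alpha_1 \log(\alpha_2 l)$ and $p(L) := \sup\{\P_{E^I}(\xi_1 > L) : E^I \text{ satisfies } \eqref{stable def}\}$, the union bound gives
\[
\P_{D^I}\Big(\bigcap_{l \geq 1} F_l\Big) \;\geq\; 1 - \sum_{l \geq 1} \P_{D^I}(F_l^c) \;\geq\; 1 - \sum_{l \geq 1} p(L_l),
\]
so it suffices to establish a bound $p(L) \leq C\rho^L$ with constants $C \geq 1$ and $\rho \in (0,1)$: then $p(L_l) \leq C(\alpha_2 l)^{\alpha_1 \log \rho}$, and taking $\alpha_1$ large enough that $\alpha_1 \log(1/\rho) \geq 2$ and then $\alpha_2$ large makes $\sum_l p(L_l) \leq \tfrac12$. (The fact that $p(L) \to 0$ also shows each $\xi_l$ is a.s.\ finite, legitimizing the reduction.)

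\emph{Single-renewal bound.} I would obtain $p(L) \leq C\rho^L$ by iterating a \emph{one-block estimate}: there exist $T_0 \in \Z_{\geq 1}$ and $q \in (0,1)$ such that $\P_{E^I}(\D_{T_0}^i \in \cal L \text{ for all } i \in I) \geq q$ for \emph{every} tuple $E^I$ each of whose clusters has between $m+1$ and $2m+1$ elements and a progressive boundary, with \emph{no} restriction on the cluster diameters. Granting this, the Markov property at the times $T_0, 2T_0, \dots$ — using that CopyCAT changes neither cluster sizes nor the progressive-boundary property, so every intermediate state is again admissible — gives $p(kT_0) \leq (1-q)^k$, hence $p(L) \leq \tfrac{1}{1-q}\rho^L$ with $\rho = (1-q)^{1/T_0}$. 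To prove the one-block estimate I would combine three diameter-agnostic ingredients, using that under CopyCAT each cluster, viewed only at the steps at which it is selected, evolves exactly as a CAT chain \eqref{copycat def}. (i) \emph{Collapse:} since a cluster has $k \leq 2m+1 < 2m+2$ elements, Theorem~\ref{reach well sep} applied to that CAT chain with $n = k$ and $r = 2k$ — noting that for a set of $k$ elements membership in $\Stable_{2k}$ reduces by \eqref{reach well sep conds} to membership in $\cal L$ — shows that within at most $T_1 := 4(2m+1)^3$ of its selections the cluster lies in $\cal L$ with probability at least $p_1 := p^{2(2m+1)} > 0$, \emph{whatever its diameter}. (ii) \emph{Persistence:} from a set of at most $2m+1$ elements lying in $\cal L$, one CAT step returns to $\cal L$ with probability at least some $q_0 > 0$ (activate $m$ elements off one end, transport them back along $e_1$; all displacements are $O(m)$, so Lemma~\ref{poss implies typ} gives the bound). (iii) \emph{Scheduling:} since a cluster is selected at each step with probability proportional to its $(\geq m+1)$ many elements and $|I|$ is bounded, a coupon-collector estimate shows that if $T_0$ is a large enough multiple of $|I|$ then, with probability at least $\tfrac12$, every cluster is selected at least $T_1$ times and at most $M$ times within the block, for a constant $M$. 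Intersecting this with the events that every cluster reaches $\cal L$ within its first $T_1$ selections (ingredient (i)) and stays in $\cal L$ for each of its remaining at most $M$ selections (ingredient (ii)) forces $\D_{T_0}^i \in \cal L$ for all $i$; the resulting $q \geq \tfrac12 (p_1 q_0^M)^{|I|}$ is a positive constant (in $d, m, \beta$ and $|I| \leq n/(m+1)$).

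\emph{Main obstacle.} The hard part is exactly this synthesis — a one-block estimate that is uniform over starting tuples of \emph{unbounded} diameter (essential, since within a long renewal interval cluster diameters may have grown linearly by \eqref{amlg prop}) and that places \emph{all} clusters simultaneously in $\cal L$ at a single deterministic time, even though CopyCAT advances only one cluster per step. The collapse input (Theorem~\ref{reach well sep}) is what makes the estimate diameter-agnostic, and the persistence input is what lets an already-straightened cluster idle, for a controlled number of its own moves, while the other clusters catch up; the remaining labor is keeping the constants $T_0, T_1, M, p_1, q_0, q$ mutually compatible. The strong-Markov reduction and the final choice of $\alpha_1, \alpha_2$ are then routine.
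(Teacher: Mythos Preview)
Your proposal is correct and follows the same skeleton as the paper: reduce by the strong Markov property at the renewal times to a uniform exponential tail bound on $\xi_1$, then sum via a union bound. The paper packages the tail bound as Proposition~\ref{exp tail}, proved by iterating a one-block estimate exactly as you do.

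Where you diverge is in how the one-block estimate is obtained. The paper's route is shorter on two counts. First, for the diameter-agnostic collapse of a single cluster into $\cal L$ it uses Proposition~\ref{cor form line} (Theorem~\ref{drift} followed by two explicit steps), rather than the heavier Theorem~\ref{reach well sep}; both give what is needed, but the former is the more direct tool. Second, to synchronize all clusters at one deterministic time the paper simply forces the CopyCAT scheduler to process clusters \emph{sequentially}---select cluster~$1$ for $|E^1|+2$ steps, then cluster~$2$, and so on---via the crude bound \eqref{part of d is cat}; since a cluster that has already reached $\cal L$ is never selected again in the block, no persistence argument is needed at all, and the block length is just $s=\sum_i(|E^i|+2)\le 2n$. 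Your coupon-collector scheduling plus the persistence ingredient (ii) achieves the same end but with more moving parts and a larger block length $T_0$. Both approaches yield the geometric tail; the paper's buys simplicity, yours makes no essential use of the ability to dictate the schedule.
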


The proof is a consequence of the fact that $\xi_1 - \xi_0$ is exponentially tight.

\begin{proposition}\label{exp tail}
There are $c_1,c_2 \in \R_{>0}$ such that, if $D^I \in \Tup$ satisfies \eqref{stable def}, then
\begin{equation}\label{return time bd}
\forall t \geq 0, \quad \P_{D^I} ( \xi_1 > t) \leq c_1 e^{-c_2t}.
\end{equation}
\end{proposition}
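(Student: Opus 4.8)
The plan is to reduce \eqref{return time bd} to a single uniform estimate over a block of $N$ steps and then iterate it. First I would record two structural facts about CopyCAT: since CAT conserves numerosity, every entry keeps its cardinality, $|\D_t^i| = |D^i| \in \llbracket m+1,2m+1\rrbracket$ for all $t$; consequently $\P(J_t = i \mid \D_t) = |D^i|/|D|$ is constant in $t$, so $(J_t)_{t\ge1}$ is an i.i.d.\ sequence independent of the CAT randomness driving the entries, and conditionally on $(J_t)_{t\ge1}$ the entries evolve as mutually independent CAT chains run along their own selection times. The block estimate I would prove is: there are $N\ge1$ and $\rho\in(0,1)$, depending only on $d$, $m$, $\beta$, and $n:=|D|$, such that $\P_{E^I}(\D_N^i\in\cal L\text{ for every }i\in I)\ge\rho$ for every $E^I$ reachable under CopyCAT from a tuple obeying \eqref{stable def} --- note that every such $E^I$ again has entries of size in $\llbracket m+1,2m+1\rrbracket$ and $|I|\le n/(m+1)$. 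Granting this, since $\{\xi_1>kN\}\subseteq\bigcap_{l=1}^k\{\D_{lN}^i\notin\cal L\text{ for some }i\}$ and $\D_{(l-1)N}^I$ is always reachable from $D^I$, conditioning successively on $\F_{(k-1)N},\F_{(k-2)N},\dots$ gives $\P_{D^I}(\xi_1>kN)\le(1-\rho)^k$, whence \eqref{return time bd} holds with $c_1=(1-\rho)^{-1}$ and $c_2=-N^{-1}\log(1-\rho)$.

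To prove the block estimate I would condition on $(J_t)_{t\ge1}$ and treat the clusters independently. A Chernoff bound shows that, if $N$ is large enough in terms of $d,m,n$, then with probability at least $\tfrac34$ every cluster is selected at least $M:=2m+3$ times during the block. On this event, and conditionally on any admissible realization of $(J_t)$, it suffices to bound from below --- uniformly in the state of the cluster --- the probability that a given cluster $i$ is a segment of $\cal L$ at time $N$, and then multiply over the at most $n/(m+1)$ clusters. For cluster $i$ I would prescribe three phases along its selection times. (1) After its first $k_i:=|D^i|$ selections, Theorem~\ref{drift} applies, because $k_i\le2m+1<2m+2$ and because at its first selection the cluster is still in its initial state $E^i$; it gives probability at least $p_{k_i}>0$ that $\diam(\D^i)\le2k_i^2\le2(2m+1)^2=:D_{\max}$. (2) In its next two selections it becomes a translate of $L_{1,k_i}$ with probability at least $q_0>0$ depending only on $d,m,\beta$ (established below). (3) In each of its remaining (at most $N$) selections the cluster is left unchanged --- hence remains a segment of $\cal L$ --- with probability at least $q_{\mathrm{stay}}:=\min_{m+1\le k\le2m+1}\P_{L_{1,k}}(\C_1=L_{1,k})>0$, the positivity of which follows by activating the $m$ rightmost elements of $L_{1,k}$ and transporting them straight back. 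Multiplying the three phases and over the clusters yields the block estimate with $\rho:=\tfrac34\,(p_{\min}q_0q_{\mathrm{stay}}^N)^{n}$, where $p_{\min}:=\min_{m+1\le k\le2m+1}p_k$.

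It remains to prove the claim used in phase (2): from every $C'\subset_k\Z^d$ with $k\le2m+1$ and $\diam(C')\le D_{\max}$, some translate of $L_{1,k}$ is reachable in two CAT steps, with probability bounded below by a constant depending only on $d,m,\beta$. I would give an explicit two-step construction. In the first step, activate any $m$ of the $k$ elements, let $v$ be the lexicographically least of the $k-m\ge1$ survivors, and transport the activated elements, in order, to $v-e_1,v-2e_1,\dots,v-me_1$; each of these lies in the boundary of the current set because it is lexicographically smaller than $v$. The configuration is now the $\le m$ ``stray'' survivors together with the line $\{v-me_1,\dots,v-e_1,v\}$. In the second step, activate the $\le m$ strays together with the $2m+1-k$ leftmost elements of that line, leaving the contiguous $e_1$-segment of $k-m$ elements ending at $v$, and transport to $v+e_1,\dots,v+me_1$; the result is the translate $\{v-(k-m-1)e_1,\dots,v+me_1\}$ of $L_{1,k}$. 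Every activation and transport occurs with positive probability and every consecutive distance appearing is at most $2D_{\max}+2m$, so Lemma~\ref{poss implies typ} (third bound, with $t=2$) supplies the lower bound $q_0$. The crux of the whole argument --- and the step I expect to be the main obstacle --- is the interplay between phase (2) and the requirement that the constants be diameter-agnostic: the entries of a reachable $E^I$ can have arbitrarily large diameter, so no fixed-length construction alone could straighten them, and the essential move is to first spend $k_i$ selections invoking Theorem~\ref{drift} to collapse the diameter below $D_{\max}$ before attempting to form a segment. The remaining ingredients --- the Chernoff estimate, the conditional independence of the clusters given $(J_t)$, and the geometric iteration over blocks --- are routine.
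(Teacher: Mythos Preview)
Your proof is correct and follows the same high-level architecture as the paper's: establish a uniform block estimate of the form $\P_{E^I}(\D_N^i\in\cal L\text{ for all }i)\ge\rho$ over all tuples $E^I$ with entry sizes in $\llb m+1,2m+1\rrb$, then iterate via the Markov property to get the exponential tail. Both arguments feed the block estimate with Theorem~\ref{drift} (diameter collapse) followed by an explicit two-step line formation; your phase~(2) construction is essentially the paper's Proposition~\ref{line wpp}, and your phase~(1)+(2) together reproduce Proposition~\ref{cor form line}.

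The genuine difference is in how the selection schedule is handled within a block. The paper simply \emph{forces} a deterministic schedule---select cluster $1$ for its first $|E^1|+2$ steps, then cluster $2$ for the next $|E^2|+2$, and so on---using the crude bound \eqref{part of d is cat}, which already pays the factor $n^{-s}$ for the forcing and leaves each cluster to be straightened by Proposition~\ref{cor form line} while the others sit idle. This takes $s=\sum_i(|E^i|+2)\le 2n$ steps and avoids any probabilistic bookkeeping on $(J_t)$. You instead let $(J_t)$ be random, invoke a Chernoff bound to guarantee every cluster is hit at least $2m+3$ times, and then exploit the conditional independence of the clusters given $(J_t)$ to multiply the per-cluster success probabilities. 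This is more elaborate than necessary---the Chernoff step, the conditional-independence step, and the staying phase~(3) are all avoided by the paper's deterministic forcing---but it is valid and has the minor virtue of making the product structure of CopyCAT explicit. Your identification of the crux (that the constants must be diameter-agnostic, hence Theorem~\ref{drift} must be invoked before any fixed-length straightening) matches the paper exactly.
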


\begin{proof}[Proof of Proposition~\ref{prop short rets}]
By the strong Markov property applied to $\xi_{l-1}$ and \eqref{return time bd},
\[
\P_{D^I} (F_l^c) \leq c_1 e^{-c_2 \alpha_1 \log (\alpha_2 l)}.
\]
Taking $\alpha_1 = \frac{2}{c_2}$ and $\alpha_2 = \frac{2}{\sqrt{c_1}}$ gives a bound of $(2l)^{-2}$. A union bound then gives \eqref{short rets}:
\[
\P_{D^I} \left( \cap_{l=1}^\infty F_l \right) \geq 1 - \sum_{l=1}^\infty \P_{D^I} (F_l^c) \geq 1 - \sum_{l=1}^\infty (2l)^{-2} \geq \frac12.
\]
\end{proof}

The time between consecutive renewals is exponentially tight for two reasons. First, let $D^I \in \Tup$ with $\sum_{i \in I} |D^i| = n$. By the definition of CopyCAT \eqref{copycat def}, 
\begin{equation}\label{part of d is cat}
\P_{D^I} \left( (\D_s^i)_{s \leq t} = (C_s)_{s \leq t} \right) \geq n^{-t} \, \P_{D^i} \left( (\C_s)_{s \leq t} = (C_s)_{s \leq t} \right),
\end{equation}
for every $i \in I$, $t \geq 1$, and $(C_s)_{s \leq t} \in {\cal S}^t$. 
Second, CAT returns to $\cal L$ after a fixed number of steps with a probability of at least some $q>0$ under $\P_C$, uniformly for $C \subset_n \Z^d$ with $n \in \llb m+1,2m+1\rrb$ elements.

\begin{proposition}\label{cor form line}
There is a positive number $p>0$ such that, if $C \subset_n \Z^d$ has $n \in \llb m+1,2m+1\rrb$ elements, then
\begin{equation}\label{form l bd}
\P_{C} ( \C_{n+2} \in \cal L) \geq p.
\end{equation}
\end{proposition}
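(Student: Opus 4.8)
The plan is to reserve the first $n$ steps for a diameter--agnostic concentration, supplied directly by Theorem~\ref{drift}, and the last two steps for assembling a line segment, with the probability of the latter quantified by Lemma~\ref{poss implies typ}. Since $n \le 2m+1 < 2m+2$, Theorem~\ref{drift} gives a $p_0 > 0$ (depending only on $n,m,d,\beta$) with $\P_C(\diam(\C_n) \le 2n^2) \ge p_0$ for every $C \subset_n \Z^d$, so it remains to show that, from \emph{any} $n$-element set, CAT can reach a translate of $L_{1,n}$ in exactly two steps, and to bound the probability of doing so from a set of bounded diameter.

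For the reachability claim, fix $D \subset_n \Z^d$ with $n \in \llbracket m+1, 2m+1 \rrbracket$ and consider the following legal two-step trajectory. In the first step, activate any $m$ elements of $D$ and let $R$ be the remaining $n-m$ elements; let $r$ be an element of $R$ of maximal first coordinate. Transport the $m$ elements, one at a time, to $r+e_1, r+2e_1, \dots, r+m e_1$: each of these sites is adjacent to the site placed just before it (or to $r$, for the first one), and none lies in $R$ because its first coordinate exceeds that of every element of $R$, so each is a legal boundary site. The resulting set $\C_1 = R \cup \{r+e_1,\dots,r+m e_1\}$ contains the segment $S = \{r, r+e_1, \dots, r+m e_1\}$ of $m+1$ elements, and its remaining elements are exactly $R \setminus \{r\}$, of which there are $n-m-1 \le m$ (this is the only place the hypothesis $n \le 2m+1$ is used). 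In the second step, activate all of $R \setminus \{r\}$ together with the $2m-n+1 \ge 0$ topmost elements of $S$---a total of exactly $m$ elements---thereby keeping the bottom $n-m$ elements $\{r, r+e_1, \dots, r+(n-m-1)e_1\}$ of $S$; then transport the $m$ elements, one at a time, to $r+(n-m)e_1, \dots, r+(n-1)e_1$, each again a legal boundary site. The result is $\{r, r+e_1, \dots, r+(n-1)e_1\} = L_{0,n-1}+r \in \cal L$. Every activation and transport along this trajectory has positive probability, since the first activation is uniform on a set containing the desired element and all subsequent activation and transport probabilities are proportional to $\phi$, which is strictly positive by \eqref{d def}.

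To assemble the bound, apply the Markov property at time $n$ to get $\P_C(\C_{n+2} \in \cal L) \ge \E_C\big[\1_{\{\diam(\C_n) \le 2n^2\}}\, \P_{\C_n}(\C_2 \in \cal L)\big]$. On the event $\{\diam(\C_n) \le 2n^2\}$, the reachability claim provides a translate $E$ of $L_{1,n}$ with $\P_{\C_n}(\C_2 = E) > 0$, so the third bound of Lemma~\ref{poss implies typ} with $t=2$ gives $\P_{\C_n}(\C_2 \in \cal L) \ge \P_{\C_n}(\C_2 = E) \ge a^2(2 \cdot 2n^2)^{-2b}$, with $a,b$ the constants of that lemma. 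Combining with Theorem~\ref{drift} yields $\P_C(\C_{n+2} \in \cal L) \ge p_0\, a^2 (4n^2)^{-2b} =: p > 0$, uniformly in $C \subset_n \Z^d$. The one point needing care is the reachability claim---checking that each prescribed transport site really lies on the boundary of the current (partially built) set and that $\C_1$ and $\C_2$ have the claimed shapes; once that combinatorics is in place, Theorem~\ref{drift} and Lemma~\ref{poss implies typ} do the rest.
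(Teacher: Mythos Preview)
Your proof is correct and follows essentially the same approach as the paper: first apply Theorem~\ref{drift} to bound the diameter after $n$ steps, then use an explicit two-step construction to form a line segment, with the probability lower-bounded via Lemma~\ref{poss implies typ}. The paper packages the two-step construction and the appeal to Lemma~\ref{poss implies typ} into a separate Proposition~\ref{line wpp} (and extends the segment to both sides of the anchor rather than one), but the substance is the same.
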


The time $n+2$ in \eqref{form l bd} comes from waiting $n$ steps to have a small (i.e., bounded by a function of $n$) diameter (Theorem~\ref{drift}), after which we can dictate two transport steps to form a line, without introducing a dependence on $\diam (C)$ into $p$. (We shortly explain how to form a line in two steps.)

\begin{proof}[Proof of Proposition~\ref{exp tail}]
Let $E^I \in \Tup$ satisfy $|E^i| \in \llbracket m+1,2m+1 \rrbracket$ for every $i \in I$ and denote $s = \sum_{i \in I} (|E^i| + 2)$. By \eqref{part of d is cat} and Proposition~\ref{cor form line}, there is $p > 0$ such that
\[
\P_{E^I} \left( \D_s^i \in \cal L \,\,\text{for all $i \in I$} \right) \geq p,
\]
for any such $E^I$. 
Note that $s \leq 2n$ because $E^I$ has at most $[\frac{n}{m+1}] \leq \frac{n}{2}$ clusters and $\sum_{i \in I} |E^i| = n$. 
Consequently, if $D^I \in \Tup$ satisfies \eqref{stable def}, then
\[
\forall t \geq 0, \quad \P_{D^I} (\xi_1 > t) \leq (1-p)^{[\frac{t}{s}]} \leq (1-p)^{\frac{t}{2n} - 1}.
\]
The first inequality holds by applying the preceding bound and the Markov property every $s$ steps. The second inequality uses the fact that $[r] \geq r-1$ for $r > 0$ and $s \leq 2n$. 
This proves \eqref{return time bd} with $c_1 = \frac{1}{1-p}$ and $c_2 = \frac{1}{2n} \log (\frac{1}{1-p})$.
\end{proof}

To conclude this subsection, we prove Proposition~\ref{cor form line}. The proof uses the fact that, if $C$ has $n \leq 2m+1$ elements, then $\C_2$ forms a line with at least a probability that depends on the diameter of $C$.

\begin{proposition}\label{line wpp}
There are positive numbers $c_1, c_2$ such that, if $C \subset_n \Z^d$ has $n \in \llb m+1,2m+1\rrb$ elements, then
\[
\P_{C} ( \C_2 \in \cal L ) \geq c_1 \diam (C)^{-c_2}.
\]
In fact, the same bound applies for any $n > m$ with $\C_{\frac{n-1}{m}}$ in the place of $\C_2$ if $n-1$ is a multiple of $m$ and $\C_{[\frac{n-1}{m}]+1}$ otherwise.
\end{proposition}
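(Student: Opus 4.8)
The plan is to reduce the bound to a combinatorial reachability statement and then read the estimate off the third inequality of Lemma~\ref{poss implies typ}. That inequality states that for any $t\geq 1$ and any $E\subset_n\Z^d$ with $\P_C(\C_t=E)>0$ one has $\P_C(\C_t=E)\geq a^t(t\,\diam(C))^{-bt}$, where $a,b>0$ depend only on $d,m,n,\beta$. Since $\{\C_t=E\}\subseteq\{\C_t\in\cal L\}$ whenever $E\in\cal L$, it suffices to exhibit, for the value of $t$ named in the statement, \emph{some} line segment $E\in\cal L$ with $|E|=n$ that is reachable from $C$ in exactly $t$ steps; the claimed bound then holds with $c_1=a^t t^{-bt}$ and $c_2=bt$.

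To produce such an $E$ with control on $t$, I would grow a segment parallel to $e_1$, committing $m$ elements to it per step. In the first step, activate any $m$ elements of $C$, set $R:=\C_0\setminus\AA_0$ (so $|R|=n-m\geq 1$), let $p\in R$ have maximal $e_1$-coordinate (breaking ties arbitrarily), and transport the $m$ activated elements, in order, to $p+e_1,\dots,p+me_1$. This is possible because each of these sites is absent from $R$ and is adjacent either to $p\in R$ or to a previously placed site, so every corresponding factor $\mu_{\bdy(\cdot),\cdot}(\cdot)$ is positive; likewise every activation factor is positive. The resulting configuration has a collinear segment $\{p,p+e_1,\dots,p+me_1\}$ of $m+1$ elements together with $n-m-1$ ``off-segment'' elements, all of $e_1$-coordinate at most $p\cdot e_1$. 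In each later step, as long as at least $m$ off-segment elements remain, activate $m$ of them and transport so as to extend the segment by $m$ sites to the right; no collision can occur, because these sites lie strictly to the right of $p$ while every off-segment element lies weakly to its left, and off-segment elements are only ever removed, never created. After $K:=[\tfrac{n-1}{m}]$ such steps, exactly $r:=(n-1)-Km$ off-segment elements remain (with $0\leq r<m$) and the segment has $n-r$ elements. If $r=0$, i.e.\ $m\mid n-1$, then $\C_K\in\cal L$ already has $n$ elements, so we are done in $K=(n-1)/m$ steps. Otherwise one further step---activate the $r$ remaining off-segment elements together with the $m-r$ rightmost segment elements, then transport to fill the segment out to $n$ elements---produces a line in $K+1=[\tfrac{n-1}{m}]+1$ steps, matching the statement.

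For the displayed bound, where $n\in\llbracket m+1,2m+1\rrbracket$ and the construction above reaches a line in at most two steps, observe that from any line of $n$ elements one reaches another line of $n$ elements in a single step (activate $m$ elements, keep a survivor of maximal $e_1$-coordinate, and transport to rebuild the segment). Hence a line of $n$ elements is reachable from $C$ in exactly two steps, and Lemma~\ref{poss implies typ} applies with $t=2$.

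There is essentially no analytic difficulty here; everything follows from Lemma~\ref{poss implies typ}. The points that need care are the discrete bookkeeping in the second paragraph---verifying the no-collision property, which is precisely why $p$ is taken to be $e_1$-maximal in $R$, and checking that ``$m$ commitments per step'' yields the stated step counts---together with the small boundary cases $n=m+1$ (one step already gives a line) and $m\mid n-1$.
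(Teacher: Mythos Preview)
Your proof is correct and follows essentially the same approach as the paper: exhibit an explicit sequence of activation/transport substeps that reaches a line segment, then invoke Lemma~\ref{poss implies typ} to convert positivity into the quantitative bound. The constructions differ only in detail---the paper fixes an $e_1$-maximal element $x\in C$ \emph{before} activating and then extends the segment both right (step~1) and left (step~2) of $x$, whereas you activate first, pick the $e_1$-maximal survivor $p$, and extend only to the right---but these are equivalent in spirit and both land cleanly in Lemma~\ref{poss implies typ}. Your write-up has the minor advantage of spelling out the general $n>m$ case, which the paper explicitly omits.
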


\begin{proof}
Let $x \in C$ be an element with the greatest $e_1$ component among elements in $C$. In the first step, activate any $m$ elements other than $x$, and transport them to $x+e_1, \dots, x + me_1$. In the second step, activate the remaining $n-(m+1) \leq m$ elements, along with $m - (n - (m+1))$ that were activated in the first step. Place the former elements at $x - e_1, \dots, x - (n-(m+1))e_1$, and keep the latter elements where they are. When these steps occur,
\[
\C_2 = \{x - (n - (m+1))e_1, \dots, x + m e_1\} \in \cal L.
\]
CAT realizes $\cal L$ in this way with positive probability. The claimed bound then follows from Lemma~\ref{poss implies typ}. We omit the generalization to $n > m$, which is straightforward.
\end{proof}

Proposition~\ref{cor form line} is a simple consequence of Theorem~\ref{drift} and Proposition~\ref{line wpp}.

\begin{proof}[Proof of Proposition~\ref{cor form line}]
We write
\[
\P_{C} (\C_{n+2} \in \cal L) \geq \P_{C} (\diam (\C_n) \leq 2n^2) \, \P_{C} (\C_{n+2} \in \cal L \mid \diam (\C_n) \leq 2n^2).
\]
The first factor is at least $q_1>0$ by Theorem~\ref{drift}, while the second factor is at least $q_2>0$ by the Markov property applied at time $n$ and Proposition~\ref{line wpp}. Taking $p = q_1q_2$ proves the claim.
\end{proof}

\subsection{Growth of the norm of the random walk}

In this subsection, we work with an arbitrary pair of distinct clusters. For simplicity, we label these clusters $1$ and $2$, and we omit the $ij$ subscripts from all notation. For example, we denote $S_k^{12}$ as $S_k$. Throughout this subsection, we assume that $D^I \in \Tup$ satisfies \eqref{stable def}, in which case $(S_k)_{k \geq 0}$ under $\P_{D^I}$ is a symmetric, aperiodic, irreducible random walk on $\Z^d$, by Proposition~\ref{wm is rw}. 

We need two standard random walk estimates. Denote the first time that $(S_k)_{k \geq 0}$ enters $A \subseteq \Z^d$ by $T_A := \inf \{k \geq 0: S_k \in A \}$. Propositions 2.4.5 and 6.4.2 of \cite{lawler2010} state that there are $\alpha_3, \alpha_4, \alpha_5 \in \R_{>0}$ such that, for any $R, \lambda \in \R_{>0}$ and $x \in B(R)^c$, 
\begin{align}
\P_x (T_{B(R)^c} > \lambda R^2) &\leq \alpha_3 e^{-\alpha_4 \lambda}, \label{rw est1}\\ 
\P_x (T_{B(R)} < \infty) &\leq \alpha_5 \left( \frac{R}{\| x \|} \right)^{d-2}. \label{rw est2}
\end{align}

\begin{remark}[The estimates hold even though the increments are unbounded]
The statements of Propositions 2.4.5 and 6.4.2 of \cite{lawler2010} assume that the increments of the random walk are bounded, which is not true of $S_k$. However, the proofs of the propositions apply as written because the increments of $S_k$ are exponentially tight, by Proposition~\ref{exp tail}. The only exception is that the appeal to Proposition 4.3.1 in the proof of Proposition 6.4.2 must be replaced by one to Proposition 4.3.5, with the same conclusion.
\end{remark}

\begin{remark}[The same constants work for every pair of clusters]
In general, $\alpha_3$, $\alpha_4$, and $\alpha_5$ depend on the increment distribution of the random walk. However, there are at most $(m+1)^2$ increment distributions among the walks indexed by $i$ and $j$, since each cluster has a number of elements in $\llbracket m+1, 2m+1 \rrbracket$. We can therefore choose the constants in \eqref{rw est1} and \eqref{rw est2} to hold for every pair of clusters.
\end{remark}

Let $r > 1$ and define
\begin{equation}\label{w and d}
w(r) = \left[ \alpha_2^{-1} \log \left( 2 \alpha_1 ( \log r)^2 \right) (e r)^2 \right] + 1 \quad \text{and} \quad \delta (r) = \frac{1}{2 \alpha_3 (\log r)^2}.
\end{equation}

\begin{proposition}\label{doubling est}
If $x \in \Z^d$ satisfies $r:= \|x\| > 1$, then
\begin{equation*}
\P_x \left( T_{B(e r)^c} \leq w(r), \,\, T_{B(\delta(r) r)} = \infty \right) \geq 1 - \frac{1}{(\log r)^2}.
\end{equation*}
\end{proposition}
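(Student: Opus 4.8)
The plan is to bound the probability of the complementary event by a union bound and to control the two resulting pieces with the exit-time estimate \eqref{rw est1} and the transience estimate \eqref{rw est2}, each used once. Fix $x\in\Z^d$ with $r=\|x\|>1$. We may assume $r>e$, since for $1<r\le e$ we have $(\log r)^2\le 1$, so $1-\frac{1}{(\log r)^2}\le 0$ and the asserted lower bound is vacuous. The complement of $\{T_{B(er)^c}\le w(r),\ T_{B(\delta(r)r)}=\infty\}$ equals $\{T_{B(er)^c}>w(r)\}\cup\{T_{B(\delta(r)r)}<\infty\}$, so it is enough to show that each of $\P_x(T_{B(er)^c}>w(r))$ and $\P_x(T_{B(\delta(r)r)}<\infty)$ is at most $\frac{1}{2(\log r)^2}$. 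Both \eqref{rw est1} and \eqref{rw est2} are available for $S_k$ here, by Proposition~\ref{wm is rw} and the remark that they persist despite $S_k$ having only exponentially tight increments.

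For the first piece, note that $x\in B(er)$ because $r<er$, so \eqref{rw est1} applies to the exit time of $B(er)$ with $\lambda=w(r)/(er)^2$. The definition of $w(r)$ in \eqref{w and d} — in which the integer part and the ``$+1$'' only enlarge the value — is chosen precisely so that the right-hand side of \eqref{rw est1} equals $\frac{1}{2(\log r)^2}$ at this $\lambda$; hence $\P_x(T_{B(er)^c}>w(r))\le\frac{1}{2(\log r)^2}$.

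For the second piece, since $\|x\|=r$, applying \eqref{rw est2} with the ball $B(\delta(r)r)$ bounds $\P_x(T_{B(\delta(r)r)}<\infty)$ by a constant multiple of $(\delta(r)r/r)^{d-2}=\delta(r)^{d-2}$. Because $d\ge 3$ and $(\log r)^2\ge 1$ (as $r>e$), the definition of $\delta(r)$ in \eqref{w and d} makes this at most $\frac{1}{2(\log r)^2}$, with equality when $d=3$ and a smaller value when $d>3$; as usual, the constants entering \eqref{rw est1}--\eqref{rw est2} are fixed at the outset and may be taken as large as convenient (they are upper-bound prefactors), so the numerology of \eqref{w and d} is consistent. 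Adding the two bounds shows that the probability of the complement is at most $\frac{1}{(\log r)^2}$, which is the proposition. I do not expect a genuine obstacle here: the statement is a bookkeeping consolidation of two standard random walk facts, and the only points requiring care are the transfer of those facts to the increment distributions that actually arise (addressed by the remark following \eqref{rw est2}) and the elementary matching of $w(r)$, $\delta(r)$, and the constants in \eqref{rw est1}--\eqref{rw est2}.
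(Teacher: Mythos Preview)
Your proof is correct and follows exactly the paper's approach: a union bound on the complement, then one application each of \eqref{rw est1} and \eqref{rw est2} with the parameters $w(r)$ and $\delta(r)$ chosen to make each piece at most $\tfrac{1}{2(\log r)^2}$. Your added care---handling the vacuous range $1<r\le e$, noting that $x\in B(er)$ so the exit-time estimate applies, and remarking that the upper-bound constants in \eqref{rw est1}--\eqref{rw est2} can be enlarged to make the numerology of \eqref{w and d} consistent---only sharpens the paper's two-sentence argument.
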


\begin{proof}
By a union bound, it suffices to show that $\P_x (T_{B(e r)^c} > w( r ))$ and $\P_x (T_{B(\delta( r ) r)} < \infty)$ are each at most $\frac{1}{2(\log r)^2}$. These bounds follow from substituting $w(r)$ and $\delta (r)$ into \eqref{rw est1} and \eqref{rw est2}.
\end{proof}

Define the random times
\begin{align}
\eta_l (r) &:= \inf \{k \geq \eta_{l-1} (r): \| S_k \| > e \| S_{\eta_{l-1} (r)} \| \} \label{eta def}\\ 
\theta_l (r) &:= \inf \{k \geq \eta_{l-1} (r): \| S_k \| \leq \delta ( S_{\eta_{l-1} (r)} ) \| S_{\eta_{l-1} (r)} \| \}. \nonumber
\end{align}
Additionally, define the sequence of events $(G_l (r))_{l \geq 1}$ according to
\[
G_l (r) := \left\{ \eta_l (r) - \eta_{l-1} (r) \leq w \left( \| S_{\eta_{l-1} (r)} \| \right), \,\, \theta_l (r) = \infty \right\}.
\]
In these terms, Proposition~\ref{doubling est} states that
\begin{equation*}
\forall x \in \Z^d: \|x\|>1, \quad \P_x (G_1 ( \|x\|)) \geq 1 - \frac{1}{(\log \| x\|)^2}.
\end{equation*}

\begin{proposition}\label{prop g is typical}
Let $x \in \Z^d$ satisfy $r:= \|x\| > 1$. Then
\begin{equation}\label{g is typical}
\P_x \left( \cup_{l=1}^\infty G_l (r)^c \right) \lesssim \frac{1}{\log r}.
\end{equation}
\end{proposition}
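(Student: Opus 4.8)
The plan is to decompose over the first index $l$ at which $G_l(r)$ fails and to bound each term using the one-stage estimate in Proposition~\ref{doubling est}, applied through the strong Markov property at the stopping time $\eta_{l-1}(r)$. Writing $\P_x(\cup_{l\ge1}G_l(r)^c)=\sum_{l\ge1}\P_x(G_l(r)^c\cap\cap_{k=1}^{l-1}G_k(r))$, the crucial observation is that on $\cap_{k=1}^{l-1}G_k(r)$ the norm of the walk has grown at least $(l-1)$-fold geometrically by time $\eta_{l-1}(r)$, so that the error bound $1/(\log\|S_{\eta_{l-1}(r)}\|)^2$ supplied by Proposition~\ref{doubling est} is at most $1/((l-1)+\log r)^2$, which is summable in $l$.

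First I would note that, with $\eta_0(r)=0$, each $\eta_l(r)$ is a stopping time for the natural filtration $(\mathcal{F}_k)_{k\ge0}$ of $(S_k)_{k\ge0}$, by induction, since $\eta_l(r)$ is the first passage of $\|S_\cdot\|$ above the $\mathcal{F}_{\eta_{l-1}(r)}$-measurable level $e\|S_{\eta_{l-1}(r)}\|$. The event $\{\theta_k(r)=\infty\}$ appearing in $G_k(r)$ is \emph{not} $\mathcal{F}_{\eta_{l-1}(r)}$-measurable, so rather than condition on $\cap_{k=1}^{l-1}G_k(r)$ I would enlarge it to $E_{l-1}:=\cap_{k=1}^{l-1}\{\eta_k(r)-\eta_{k-1}(r)\le w(\|S_{\eta_{k-1}(r)}\|)\}$, which is $\mathcal{F}_{\eta_{l-1}(r)}$-measurable and still forces $\eta_1(r),\dots,\eta_{l-1}(r)<\infty$ (here $E_0$ is the whole space). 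On $E_{l-1}$ the definition of $\eta_k(r)$ gives $\|S_{\eta_k(r)}\|>e\|S_{\eta_{k-1}(r)}\|$ for $k\le l-1$, hence $\|S_{\eta_{l-1}(r)}\|>e^{l-1}r>1$. On $\{\eta_{l-1}(r)<\infty\}$ the strong Markov property identifies $G_l(r)$, conditionally on $S_{\eta_{l-1}(r)}$, with the event $G_1(\|S_{\eta_{l-1}(r)}\|)$ for a fresh walk started at $S_{\eta_{l-1}(r)}$, so Proposition~\ref{doubling est} yields $\P_x(G_l(r)^c\mid\mathcal{F}_{\eta_{l-1}(r)})\le(\log\|S_{\eta_{l-1}(r)}\|)^{-2}$ whenever $\|S_{\eta_{l-1}(r)}\|>1$.

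Combining these, for every $l\ge1$,
\begin{align*}
\P_x\big(G_l(r)^c\cap\cap_{k=1}^{l-1}G_k(r)\big)
&\le\P_x(G_l(r)^c\cap E_{l-1})\\
&=\E_x\big[\1_{E_{l-1}}\,\P_x(G_l(r)^c\mid\mathcal{F}_{\eta_{l-1}(r)})\big]\le\frac{1}{((l-1)+\log r)^2},
\end{align*}
and summing over $l\ge1$ gives $\sum_{j\ge0}(j+\log r)^{-2}\le(\log r)^{-2}+\int_0^\infty(t+\log r)^{-2}\,dt=(\log r)^{-2}+(\log r)^{-1}\lesssim(\log r)^{-1}$ once $\log r\ge1$; when $1<r<e$ the claimed bound \eqref{g is typical} holds trivially because the left-hand side is a probability. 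The only delicate point—and the step I expect to require the most care—is the measurability issue just described, namely replacing $\cap_{k<l}G_k(r)$ by the first-passage-only event $E_{l-1}$ before conditioning at $\eta_{l-1}(r)$, together with the check that this enlargement still delivers the geometric lower bound $\|S_{\eta_{l-1}(r)}\|>e^{l-1}r$; everything else is routine bookkeeping with the strong Markov property and Proposition~\ref{doubling est}.
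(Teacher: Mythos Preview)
Your proof is correct and follows essentially the same approach as the paper's: apply the strong Markov property at the doubling times $\eta_{l-1}(r)$, invoke Proposition~\ref{doubling est} together with the geometric growth $\|S_{\eta_{l-1}(r)}\|>e^{l-1}r$, and sum the resulting bounds $(\log(e^{l-1}r))^{-2}$. In fact your treatment is more careful than the paper's on the measurability point you flag---the paper conditions on $H_l(r)=\cap_{k\le l}G_k(r)$ as if it were $\mathcal{F}_{\eta_l(r)}$-measurable, which it is not because of the tails $\{\theta_k(r)=\infty\}$; your enlargement to the first-passage event $E_{l-1}$ is exactly the clean way to handle this.
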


\begin{proof}
Denote $H_l (r) = \cap_{k \leq l} G_k (r)$. We will show that
\begin{equation}\label{qj bd}
q_j := \P_x \left( G_{l+1} (r)^c \bigm\vert H_l (r) \right) \leq \frac{1}{(\log (e^l r))^2},
\end{equation}
which implies \eqref{g is typical} because
\[
\P_x \left( \cup_{l=1}^\infty G_l (r)^c \right) \leq \sum_{l=0}^\infty q_l \lesssim \frac{1}{\log r}.
\]

To establish \eqref{qj bd}, we use the tower rule with the $\sigma$-field generated by $(S_0, \dots, S_{\eta_l (r)})$ and apply the strong Markov property to $\eta_l (r)$ as
\begin{equation}\label{qj tower}
\P_x \left( G_{l+1} (r) \bigm\vert H_l (r) \right) = \E_x \left[ \P_{S_{\eta_l (r)}} \left(G_1 \big( \| S_{\eta_l (r)} \| \big)\right) \frac{\1_{H_{l} (r)} }{\P_x (H_{l} (r))} \right].
\end{equation}
When $H_l (r)$ occurs, $\| S_{\eta_l (r)} \| > e^l r$, hence Proposition~\ref{doubling est} implies
\[
\P_{S_{\eta_l (r)}} \left(G_1 \big( \| S_{\eta_l (r)} \| \big) \right) \geq 1 - \frac{1}{(\log (e^l r))^2}. 
\]
Substituting this bound into \eqref{qj tower} gives \eqref{qj bd}.
\end{proof}

The next proposition controls the absolute and relative separation of a pair of clusters when 
\begin{equation}\label{fij gij}
\cal F := \cap_{l \geq 1} F_l \quad \text{and} \quad \cal G (r) := \cap_{l \geq 1} G_l (r)
\end{equation}
occur for sufficiently large $r>0$. We state it in terms of $R_l := \| S_{\eta_l} \|$, the norm of the random walk $l \in \Z_{\geq 0}$ ``doublings.''\footnote{Recall that $\|S_{\eta_l}\|$ is the norm of the random walk when it first exceeds $e\|S_{\eta_{l-1}}\|$ \eqref{eta def}. In particular, $R_l$ is at least $e^l R_0$.}

\begin{proposition}[Time, diameter, and separation bounds]\label{dist rel to diam} 
Let $l \in \Z_{\geq 0}$. If $R_0$ is sufficiently large, then the occurrence of $\cal F \cap \cal G (R_0)$ implies that
\begin{equation}\label{dist and diam bds}
t \lesssim (R_l \log R_l)^2, \quad \diam (\D_t^1) \lesssim \log R_l, \quad 
\dist (\D_t^1, \D_t^2) \gtrsim \frac{R_l}{(\log R_l)^2},
\end{equation}
for all $t \in N^{-1} \big( [\eta_l, \eta_{l+1}) \big)$.
\end{proposition}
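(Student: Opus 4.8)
The plan is to fix $t \in N^{-1}([\eta_l, \eta_{l+1}))$ and let $k$ denote the renewal index with $\xi_k \le t < \xi_{k+1}$, so that $k \in [\eta_l, \eta_{l+1})$, and to deduce the three bounds from two elementary inputs: on $\cal G(R_0)$ the $l$-th ``doubling epoch'' has length $\eta_{l+1}-\eta_l \le w(R_l)$, while on $\cal F$ consecutive renewals obey $\xi_j - \xi_{j-1} \le \alpha_1\log(\alpha_2 j)$. I would first combine these to bound the elapsed time $t$; the diameter bound then follows from a.m.l.g.\ \eqref{amlg prop} by resetting at the last renewal $\xi_k$, and the separation bound follows from Proposition~\ref{lem dist bd} together with the event $\{\theta_{l+1}(R_0)=\infty\}$ that is contained in $\cal G(R_0)$.

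For the elapsed-time bound, note that $\eta_0 = 0$ and, because the doubling times force $R_j > e R_{j-1}$, one has $R_{j-1} \le e^{-(l+1-j)}R_l$ for $j \le l+1$. Since $w(r) \lesssim r^2\log\log r$, summing the geometric series gives, on $\cal G(R_0)$, $\eta_{l+1} = \sum_{j=1}^{l+1}(\eta_j-\eta_{j-1}) \le \sum_{j=1}^{l+1} w(R_{j-1}) \lesssim R_l^2 \log\log R_l$. Feeding this into $\cal F$ and using $\xi_0 = 0$ (valid because $D^I$ satisfies \eqref{stable def}), $\xi_{\eta_{l+1}} = \sum_{j=1}^{\eta_{l+1}}(\xi_j - \xi_{j-1}) \le \sum_{j=1}^{\eta_{l+1}}\alpha_1\log(\alpha_2 j) \lesssim \eta_{l+1}\log\eta_{l+1} \lesssim (R_l\log R_l)^2$; since $t < \xi_{\eta_{l+1}}$, this is the first asserted bound. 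The same estimates yield the auxiliary fact $\xi_{k+1}-\xi_k \le \alpha_1\log(\alpha_2(k+1)) \lesssim \log R_l$, because $k+1 \le \eta_{l+1} \lesssim R_l^2\log\log R_l$.

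For the diameter, $\D_{\xi_k}^1 \in \cal L$ by definition of the renewal times, so $\diam(\D_{\xi_k}^1) = |D^1| - 1 \le 2m$; since each CopyCAT coordinate evolves by CAT it obeys \eqref{amlg prop}, whence $\diam(\D_t^1) \le 2m + m(t - \xi_k) \le 2m + m(\xi_{k+1}-\xi_k) \lesssim \log R_l$. For the separation, Proposition~\ref{lem dist bd} applied with renewal index $k+1$ gives $\dist(\D_t^1,\D_t^2) \ge \|S_k\| - O((\xi_{k+1}-\xi_k)^2)$. On $\cal G(R_0)$ the event $\{\theta_{l+1}(R_0)=\infty\}$ forces $\|S_j\| > \delta(R_l)R_l = R_l/(2\alpha_3(\log R_l)^2)$ for every $j \ge \eta_l$, in particular for $j = k$; combined with $(\xi_{k+1}-\xi_k)^2 \lesssim (\log R_l)^2$ this gives $\dist(\D_t^1,\D_t^2) \ge R_l/(2\alpha_3(\log R_l)^2) - O((\log R_l)^2) \gtrsim R_l/(\log R_l)^2$, where the last step needs $R_0$ — hence $R_l \ge R_0$ — large enough for the polynomial term to dominate the polylogarithmic error, uniformly in $l$.

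I expect the main obstacle to be the two-level bookkeeping: the renewal times $\xi_k$ are indexed by $k$, while the doublings $\eta_l$ are themselves particular values of $k$, and one must verify the three conclusions uniformly over the entire epoch $[\eta_l,\eta_{l+1})$. Concretely, although $\|S_k\|$ can dip far below $R_l$ within the epoch, $\{\theta_{l+1}=\infty\}$ guarantees it never falls below $\delta(R_l)R_l$, which must then be shown to dominate the $O((\log R_l)^2)$ error in Proposition~\ref{lem dist bd} — this is precisely what consumes the hypothesis that $R_0$ is large — and, although the renewal gaps $\xi_{k+1}-\xi_k$ grow with $k$, they remain $\lesssim \log R_l$ because $k$ never exceeds $\eta_{l+1}$, which by the first step is only polynomial in $R_l$. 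Everything else is a routine combination of a.m.l.g., Proposition~\ref{lem dist bd}, and the definitions of $w$, $\delta$, $\cal F$, and $\cal G$.
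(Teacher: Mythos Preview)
Your proposal is correct and follows essentially the same route as the paper: you bound $\eta_{l+1}$ by summing $w(R_{j-1})$ over the geometrically growing $R_j$, convert this to a bound on $\xi_{\eta_{l+1}}$ via $\cal F$, and then read off the diameter and separation bounds from a.m.l.g.\ and Proposition~\ref{lem dist bd} together with $\{\theta_{l+1}=\infty\}\subset\cal G(R_0)$. The only cosmetic difference is that you use the sharper estimate $w(r)\lesssim r^2\log\log r$ where the paper is content with $w(r)\lesssim r^2\log r$; both give the same final bound $t\lesssim (R_l\log R_l)^2$.
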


The first two bounds are consequences of the definitions of $\cal F$ and $\cal G (r)$, while the third further requires Proposition~\ref{lem dist bd} to relate cluster separation to $R_l$. 
The tricky aspect of Proposition~\ref{dist rel to diam} is that it features three different counts of time: $t$ counts the steps of $\D_t$, $k$ counts the steps of the derived random walk $S_k$, and $l$ counts the doublings of $\|S_{\eta_l}\|$. To reconcile these counts---in particular, to prove the first bound of \eqref{dist and diam bds}---we will obtain estimates for $t$ in terms of $\xi_k$ and $\eta_l$; for $\xi_k$ in terms of $\eta_l$; and for $\eta_l$ in terms of $R_l$. 
In the proof, we will use the notation $f \lesssim_{R_0} g$ to indicate that $f \lesssim g$ holds under the assumption that $R_0$ is sufficiently large.

\begin{proof}[Proof of Proposition~\ref{dist rel to diam}]
We can bound $t$, $\diam (\D_t^1)$, and $\dist (\D_t^1, \D_t^2)$ in terms of $\xi_k$ and $\eta_l$ in the following way:
\begin{align}
t & \leq \sum_{0 \leq i \leq l} \sum_{j = \eta_i}^{\eta_{i+1}-1} (\xi_{j+1} - \xi_j), \label{t bd1}\\ 
\diam (\D_t^1) & \leq \xi_{N(t)+1} - \xi_{N(t)}, \quad \text{and} \label{diam bd1}\\ 
\dist (\D_t^1, \D_t^2) & \geq \|S_{N(t)}\| - O \left( (\xi_{N(t)+1} - \xi_{N(t)})^2 \right).\label{dist bd1}
\end{align}
The first bound holds because, by assumption, there have not been $l+1$ doublings by time $t$. The second bound holds because the diameter of $\D_{\xi_{N(t)}}^1$ is at most $n$, and it grows at most linearly in time (a consequence of \eqref{amlg prop} and \eqref{copycat def}) until $t \leq \xi_{N(t)+1}$. The third bound is Proposition~\ref{lem dist bd}.

We use the occurrence of $\cal F \cap \cal G (R_0)$ to bound $\eta_l$ in terms of $R_l$, and $\xi_k$ in terms of $\eta_l$. By definition, when $\cal F \cap \cal G (R_0)$ occurs, we have
\begin{align}
\forall l \geq 0, \quad &\eta_{l+1} - \eta_l \leq w(R_l), \label{eta diff bd1}\\ 
\forall k \geq 0, \quad &\xi_{k+1} - \xi_k \leq \alpha_1 \log (\alpha_2 (k+1)), \quad \text{and} \label{xi diff bd1}\\ 
\forall k \in [\eta_l, \eta_{l+1}), \quad &\|S_k\| \geq \delta (R_l) R_l. \label{sk bd1}
\end{align}
Since $w(s)$ grows more quickly than $s^2$ \eqref{w and d}, it is easy to see that
\begin{equation}
\eta_{l+1} = \sum_{0 \leq j \leq l} (\eta_{j+1} - \eta_j) \stackrel{\eqref{eta diff bd1}}{\leq} \sum_{0 \leq j \leq l} w(R_j) \lesssim_{R_0} w(R_l).\label{eta wr}
\end{equation}
Consequently, if $k \in [\eta_l,\eta_{l+1})$, then 
\begin{equation}
\xi_{k+1} - \xi_k \stackrel{\eqref{xi diff bd1}}{\leq} \alpha_1 \log (\alpha_2 \eta_{l+1}) \stackrel{\eqref{eta wr}}{\lesssim_{R_0}} \log w (R_l).\label{xi diff bd}
\end{equation}
Lastly, by definition \eqref{w and d},
\begin{equation}\label{w and d bds}
w(R_l) \lesssim_{R_0} R_l^2 \log R_l \quad \text{and} \quad \delta (R_l) \gtrsim (\log R_l)^{-2}.
\end{equation}
In view of \eqref{t bd1}, the three preceding bounds show that $t$ is at most a sum of roughly $R_l^2 \log R_l$ summands, each of which is no more than roughly $\log R_l$. More precisely,  we have
\[
t \stackrel{\eqref{t bd1}}{\leq}  \sum_{0 \leq i \leq l} \sum_{j=\eta_i}^{\eta_{i+1} - 1} (\xi_{j+1}-\xi_j) 
 \stackrel{\eqref{eta wr}, \,\,\eqref{xi diff bd}}{\lesssim_{R_0}} w(R_l) \log w(R_l) \stackrel{\eqref{w and d bds}}{\lesssim_{R_0}} (R_l \log R_l)^2.
\]
This proves the first bound of \eqref{dist and diam bds}. The other two bounds follow from:
\begin{align*}
\diam (\D_t^1) &\stackrel{\eqref{diam bd1}}{\leq} \xi_{N(t)+1} - \xi_{N(t)} \stackrel{\eqref{xi diff bd}}{\lesssim_{R_0}} \log w (R_l) \stackrel{\eqref{w and d bds}}{\lesssim_{R_0}} \log R_l,\\ 
\dist (\D_t^1, \D_t^2) &\stackrel{\eqref{dist bd1}}{\geq} \|S_{N(t)}\| - O \left( ( \xi_{N(t)+1} - \xi_{N(t)})^2 \right)\\ 
&\stackrel{\eqref{sk bd1}}{\geq} \delta (R_l) R_l - O \left( (\log w (R_l))^2 \right) \stackrel{\eqref{w and d bds}}{\gtrsim_{R_0}} R_l (\log R_l)^{-2}.
\end{align*}
\end{proof}

\subsection{Proof of Proposition~\ref{d growth}}

We now consider all distinct pairs of clusters and use the results of the previous section to prove that the overall separation of clusters grows, in the sense of Proposition~\ref{d growth}. Accordingly, we reintroduce the $ij$ superscripts and define
\[
\cal F = \cap_{i,j \in I: \, i < j} \cal F^{ij} \quad \text{and} \quad \cal G (r) = \cap_{i,j \in I: \, i < j} \cal G^{ij} (r)
\]
for $r > 0$, where $\cal F^{ij}$ and $\cal G^{ij} (r)$ are the events in \eqref{fij gij} that we previously denoted by $\cal F$ and $\cal G (r)$. Additionally, we define $R_0 = \min_{i, j \in I: \, i < j} R_0^{ij}$, where $R_0^{ij} = \| S_0^{ij} \|$.

The next two results show that there are $b, \gamma > 0$ such that, for any $\delta \in (0, \frac12)$, if $R_0$ is sufficiently large, then
\begin{equation}\label{fgro incl}
\{ \cal F \cap \cal G (R_0)\} \subseteq \left\{(\D_t^I)_{t \geq 0} \in \Tup_{\tilde R_0,b}^\N\right\} \cap \left\{\sep (\D_t^I) \geq \gamma t^{\frac12 - \delta}\right\} = \left\{(\D_t^I)_{t \geq 0} \in \Growth_{\tilde R_0, b, \gamma, \delta} \right\},
\end{equation}
where $\tilde R_0 = R_0^{0.99}$. We need to use $\tilde R_0$ in the place of $R_0$ because the definition of $\cal G (R_0)$ permits the separation to temporarily drop to $\delta (R_0) R_0$ (see the definitions of $\theta_l (r)$ \eqref{eta def} and $G_l (r)$). Both of the proofs are applications of Proposition~\ref{dist rel to diam}. 

\begin{proposition}\label{fng implies sep}
There is $b>0$ such that, if $R_0$ is sufficiently large, then 
\[
\{\cal F \cap \cal G (R_0)\} \subseteq \left\{ (\D_t^I)_{t \geq 0} \in \Tup_{\tilde R_0,b}^\N\right\},
\]
where $\tilde R_0 = R_0^{0.99}$.
\end{proposition}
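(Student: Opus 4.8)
The plan is to check, at each time $t\ge 0$, the two conditions that define membership of $\D_t^I$ in $\Tup_{\tilde R_0,b}$: that $\sep(\D_t^I)\ge\tilde R_0$, and that $\diam(\D_t^i)\le b\log\dist(\D_t^i,\D_t^{\neq i})$ for every $i\in I$. The only real input is Proposition~\ref{dist rel to diam}, applied to each of the finitely many pairs of distinct clusters, together with the elementary fact that $x\mapsto x/(\log x)^2$ is increasing for $x$ large. I would fix $b=2c$, where $c$ is the implicit constant in the $\diam$ bound of Proposition~\ref{dist rel to diam}.

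Fix $t\ge 0$. For an ordered pair $i<j$, let $l^{ij}$ be the doubling index with $N^{ij}(t)\in[\eta_l^{ij},\eta_{l+1}^{ij})$; this is well defined on $\cal F\cap\cal G(R_0)$, because $\xi_0^{ij}=0$ (as $C^I$ satisfies \eqref{stable def}), the renewal times $\xi_k^{ij}$ increase by at least one each step, and $\eta_l^{ij}\uparrow\infty$. Since $\cal F\cap\cal G(R_0)\subseteq\cal F^{ij}\cap\cal G^{ij}(R_0)$ and each $R_0^{ij}\ge R_0$ is large when $R_0$ is, Proposition~\ref{dist rel to diam} (with cluster $i$ in the role of the first cluster) gives
\[
\dist(\D_t^i,\D_t^j)\ \gtrsim\ \frac{R_{l^{ij}}^{ij}}{\big(\log R_{l^{ij}}^{ij}\big)^2}
\qquad\text{and}\qquad
\diam(\D_t^i)\ \le\ c\log R_{l^{ij}}^{ij},
\]
where $c$ and the suppressed constant are independent of $t$, $i$, $j$, and $R_0$ --- uniformity over the pair being exactly the content of the Remark that there are only $(m+1)^2$ possible increment distributions among the walks $S^{ij}$.

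For the separation bound, I would use $R_{l^{ij}}^{ij}\ge R_0^{ij}\ge R_0$ and the monotonicity of $x/(\log x)^2$ to upgrade the first display to $\dist(\D_t^i,\D_t^j)\gtrsim R_0/(\log R_0)^2$ for every pair; minimizing over $j\neq i$ and then over $i$ yields $\sep(\D_t^I)\gtrsim R_0/(\log R_0)^2$, which exceeds $\tilde R_0=R_0^{0.99}$ once $R_0$ is large. For the relative-separation bound, fix $i$ and choose $j^\ast$ with $\dist(\D_t^i,\D_t^{\neq i})=\dist(\D_t^i,\D_t^{j^\ast})$; writing $\rho=R_{l^{ij^\ast}}^{ij^\ast}$, the first display gives
\[
\log\dist(\D_t^i,\D_t^{\neq i})\ \ge\ \log\rho-2\log\log\rho-O(1)\ \ge\ \tfrac12\log\rho
\]
once $R_0$ (hence $\rho\ge R_0$) is large, while the second gives $\diam(\D_t^i)\le c\log\rho\le 2c\log\dist(\D_t^i,\D_t^{\neq i})=b\log\dist(\D_t^i,\D_t^{\neq i})$. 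Since $t$ and $i$ were arbitrary, this establishes $\cal F\cap\cal G(R_0)\subseteq\{(\D_t^I)_{t\ge 0}\in\Tup_{\tilde R_0,b}^\N\}$.

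The main obstacle is organizational rather than analytic: one must juggle the three time indices --- the CopyCAT step $t$, the renewal count $N^{ij}(t)$, and the doubling count $l^{ij}$ --- simultaneously across all pairs, and confirm that the constants furnished by Proposition~\ref{dist rel to diam}, stated there for a single pair, may be taken uniform over pairs. The two Remarks following \eqref{rw est2} supply exactly that uniformity, so no new probabilistic estimate is needed beyond Proposition~\ref{dist rel to diam}; the remainder is the calculus fact about $x/(\log x)^2$ and careful bookkeeping.
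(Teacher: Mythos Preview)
Your proof is correct and follows essentially the same approach as the paper: both arguments verify the two defining conditions of $\Tup_{\tilde R_0,b}$ by applying Proposition~\ref{dist rel to diam} to each pair of clusters, using $R_l^{ij}\ge R_0$ together with the monotonicity of $x/(\log x)^2$ for the absolute separation, and combining the diameter and distance bounds for the relative separation. Your version is more explicit than the paper's---you pin down $b=2c$, carry out the minimization over $j$ via the choice of $j^\ast$, and address uniformity of the constants across pairs---but these are elaborations of the same argument rather than a different route.
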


\begin{proof}
For $(\D_t^I)_{t \geq 0}$ to belong to $\Tup_{r,b}^\N$, (i) the separation of $\D_t^I$ must always be at least $r$ and (ii) $\diam (\D_t^i) \leq b \log \dist (\D_t^i,\D_t^{\neq i})$ must hold for every $t \geq 0$ and $i \in I$. 
By Proposition~\ref{dist rel to diam}, if $\cal F \cap \cal G (R_0)$ occurs for sufficiently large $R_0$, then
\[
\dist (\D_t^i,\D_t^{\neq i}) \gtrsim R_l(\log R_l)^{-2} \gtrsim R_0 (\log R_0)^{-2} \quad \text{and} \quad \diam (\D_t^i) \lesssim \log \dist (\D_t^i,\D_t^{\neq i}).
\]
The first bound implies that (i) holds for large enough $b$, while the second implies that (ii) holds if $R_0$ is sufficiently large.
\end{proof}

\begin{proposition}\label{fng implies finite sum}
There is $\gamma>0$ such that, for any $\delta \in (0, \frac12)$, if $R_0$ is sufficiently large, then
\[
\{\cal F \cap \cal G (R_0)\} \subseteq \left\{\sep (\D_t^I) \geq \gamma t^{\frac12 - \delta}\right\}.
\]
\end{proposition}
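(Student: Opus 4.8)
The plan is to reduce the claim about $\sep(\D_t^I)$ to a statement about a single pair of clusters and then read that statement off from Proposition~\ref{dist rel to diam}. Since $\sep(\D_t^I)=\min_{i<j}\dist(\D_t^i,\D_t^j)$ and $I$ has only finitely many ordered pairs, it suffices to produce a fixed $\gamma>0$ such that, for every $\delta\in(0,\tfrac12)$ and every pair $i<j$, on $\cal F\cap\cal G(R_0)$ one has $\dist(\D_t^i,\D_t^j)\ge\gamma\,t^{1/2-\delta}$ for all $t\ge0$, provided $R_0$ is large enough; because the random-walk constants can be taken common to all pairs (the remark before Proposition~\ref{doubling est}), one threshold ``$R_0$ sufficiently large'' serves for every pair, and taking the minimum over $i<j$ at the end is harmless.

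Fixing a pair $i<j$ and abbreviating $S_k=S_k^{ij}$, $\eta_l=\eta_l^{ij}$, $R_l=R_l^{ij}=\|S_{\eta_l}\|$, I would note that on $\cal F\cap\cal G(R_0)$ the $\eta_l$ are finite and strictly increasing (so $\eta_l\to\infty$) and each $\xi_k$ is finite (so $N(t)\to\infty$); hence every $t\ge0$ lies in $N^{-1}\big([\eta_l,\eta_{l+1})\big)$ for a unique $l=l(t)$. Applying Proposition~\ref{dist rel to diam} to this pair with this $l$ then gives $c_1,c_2>0$ such that, once $R_0$ is large,
\[
t\le c_2\,(R_l\log R_l)^2\qquad\text{and}\qquad\dist(\D_t^i,\D_t^j)\ge\frac{c_1 R_l}{(\log R_l)^2}.
\]
The remaining step is the routine elimination of $R_l$: the first inequality gives $t^{1/2-\delta}\le c_2^{1/2-\delta}(R_l\log R_l)^{1-2\delta}$, so
\[
\frac{\dist(\D_t^i,\D_t^j)}{t^{1/2-\delta}}\ \ge\ \frac{c_1}{c_2^{1/2-\delta}}\cdot\frac{R_l^{2\delta}}{(\log R_l)^{3-2\delta}},
\]
and since $R_l\ge R_0$ and $x\mapsto x^{2\delta}/(\log x)^{3-2\delta}\to\infty$, the right side is at least $1$ once $R_0$ exceeds a ($\delta$-dependent) threshold. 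This yields $\dist(\D_t^i,\D_t^j)\ge t^{1/2-\delta}$ for all $t\ge0$, so the proposition holds with $\gamma=1$ (any fixed positive constant would do).

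The hard part has, in effect, already been handled inside Proposition~\ref{dist rel to diam}, which reconciles the three clocks ($t$ for $\D_t$, $k$ for the derived random walk $S_k$, and $l$ for the ``doublings'') and converts random-walk norms into cluster separations. What remains is bookkeeping: one must (i) allow the ``$R_0$ sufficiently large'' threshold to depend on $\delta$, which is legitimate because $\delta$ is quantified before $R_0$; (ii) observe that the displayed ratio is won by the genuine power $R_l^{2\delta}$ against the poly-logarithmic corrections, which is precisely where the slack between the separation lower bound $R_l/(\log R_l)^2$ and the time upper bound $(R_l\log R_l)^2$ is consumed; and (iii) use $R_l\ge e^l R_0\ge R_0$ together with the finiteness of the set of pairs to make the threshold and the final minimum uniform. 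If anything is delicate it is only checking that $N^{-1}\big([\eta_l,\eta_{l+1})\big)$ exhausts all $t\ge0$, which follows since the $\eta_l$ tile $[0,\infty)$ in the index and $N$ is nondecreasing with unit jumps.
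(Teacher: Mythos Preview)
Your proposal is correct and follows essentially the same route as the paper: reduce to a single pair, invoke Proposition~\ref{dist rel to diam} to get $t\lesssim (R_l\log R_l)^2$ and $\dist(\D_t^i,\D_t^j)\gtrsim R_l/(\log R_l)^2$, and then eliminate $R_l$ via the ratio $\dist/t^{1/2-\delta}\gtrsim R_l^{2\delta}/(\log R_l)^{3-2\delta}\ge R_0^{2\delta}/(\log R_0)^{3-2\delta}$. Your added remarks---that every $t$ falls into some $N^{-1}([\eta_l,\eta_{l+1}))$ on $\cal F\cap\cal G(R_0)$, and that the random-walk constants may be taken uniform over pairs---make explicit points the paper leaves implicit, but the argument is otherwise the same.
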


\begin{proof}
Fix a pair $i, j \in I$ such that $i < j$ and $l \in \Z_{\geq 0}$. It suffices to prove that there is $\gamma>0$ such that, for any $\delta \in (0,\frac12)$, if $R_0^{ij}$ is sufficiently large, then the occurrence of $\cal F^{ij} \cap \cal G^{ij} (R_0^{ij})$ implies that
\begin{equation*}
\dist (\D_t^i, \D_t^j) \geq \gamma t^{\frac12 - \delta},
\end{equation*}
for every $t \in N^{-1} \big( [\eta_l^{ij}, \eta_{l+1}^{ij}) \big)$ and $\delta \in (0, \frac12)$.  
By Proposition~\ref{dist rel to diam}, if $R_0^{ij}$ is sufficiently large, then the occurrence of $\cal F^{ij} \cap \cal G^{ij} (R_0^{ij})$ implies that
\begin{equation}\label{dij bd}
t \lesssim (R_l^{ij} \log R_l^{ij})^2 \quad \text{and} \quad 
\dist (\D_t^i, \D_t^j) \gtrsim R_l^{ij} (\log R_l^{ij})^{-2}.
\end{equation} 
Hence, 
\[
\frac{\dist (\D_t^i, \D_t^j)}{t^{\frac12 - \delta}} \gtrsim \frac{(R_l^{ij})^{2\delta}}{(\log R_l^{ij})^{3-2\delta}} \geq \frac{(R_0^{ij})^{2\delta}}{(\log R_0^{ij})^{3-2\delta}}.
\]
If $R_0^{ij}$ is sufficiently large in terms of $\delta$ to satisfy $R_0^{2\delta} \geq (\log R_0)^{3-2\delta}$, then the preceding bound shows that $\dist (\D_t^i, \D_t^j) \gtrsim t^{1/2-\delta}$. 
\end{proof}

The two preceding propositions establish the inclusion \eqref{fgro incl}. To prove Proposition~\ref{d growth}, it remains to show that $\cal F \cap \cal G (R_0)$ typically occurs when $R_0$ is sufficiently large. The proof combines Propositions~\ref{prop short rets} and \ref{prop g is typical} with a union bound.

\begin{proposition}\label{fng is typical}
Let $D^I \in \Tup$ satisfy \eqref{stable def}. If $R_0$ is sufficiently large, then
\begin{equation*}
\P_{D^I} ( \cal F \cap \cal G (R_0)) \geq \frac14.
\end{equation*}
\end{proposition}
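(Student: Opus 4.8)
The plan is to bound the complement. I will show that $\P_{D^I}(\cal F^c) \le \tfrac12$ and that, once $R_0$ is large, $\P_{D^I}(\cal G(R_0)^c) \le \tfrac14$; then $\P_{D^I}(\cal F \cap \cal G(R_0)) \ge 1 - \tfrac12 - \tfrac14 = \tfrac14$. The two complements are treated separately in the next two paragraphs.

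For $\cal F$: the renewal times $\xi_l$ are defined in terms of all clusters at once, so the event $\cal F^{ij} = \cap_{l\ge1}F_l$ of \eqref{fij gij} does not depend on the pair $(i,j)$. Hence $\cal F = \cap_{l\ge1}F_l$ and Proposition~\ref{prop short rets} gives $\P_{D^I}(\cal F) \ge \tfrac12$ directly, i.e.\ $\P_{D^I}(\cal F^c) \le \tfrac12$. Nothing more is needed here.

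For $\cal G(R_0) = \cap_{i<j}\cal G^{ij}(R_0)$: I would apply a union bound over pairs. By Proposition~\ref{wm is rw}, under $\P_{D^I}$ each $(S_k^{ij})_{k\ge0}$ is a symmetric, aperiodic, irreducible random walk on $\Z^d$ issued from the deterministic point $S_0^{ij}$ with $\|S_0^{ij}\| = R_0^{ij} \ge R_0$, and $(\cal G^{ij})^c$ is a measurable functional of that walk. So Proposition~\ref{prop g is typical} applies and gives $\P_{D^I}\big((\cal G^{ij}(R_0))^c\big) \lesssim 1/\log R_0$, the implied constant being universal up to a dependence on $d$ and on the at most $(m+1)^2$ possible increment distributions (since every cluster has between $m+1$ and $2m+1$ elements, which is also why those constants may be chosen pair-independently). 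There are $\binom{|I|}{2}$ pairs, and $|I| \le n/(m+1)$ because the clusters partition an $n$-element set into parts of size at least $m+1$; in particular this count does not grow with $R_0$. Summing the pairwise bounds gives $\P_{D^I}(\cal G(R_0)^c) \lesssim_{n,m} 1/\log R_0 \le \tfrac14$ for $R_0$ large enough, which together with the previous paragraph proves the proposition.

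The argument is at bottom a union bound, so there is no substantial obstacle; the points needing care are (i) passing from the single-walk estimate of Proposition~\ref{prop g is typical} to $\P_{D^I}$, which is exactly the distributional identification in Proposition~\ref{wm is rw}; (ii) reconciling the common threshold $R_0 = \min_{i<j}R_0^{ij}$ in $\cal G^{ij}(R_0)$ with the pair-specific initial norms $R_0^{ij} = \|S_0^{ij}\|$, so that Proposition~\ref{prop g is typical} is applied with a legitimate parameter and the resulting bound is monotone in $R_0$; and (iii) noting that the number of pairs depends only on $n$ and $m$, so the $O(1/\log R_0)$ error can be pushed below $\tfrac14$. I expect (ii) to be the only genuine subtlety.
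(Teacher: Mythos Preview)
Your proposal is correct and follows essentially the same approach as the paper: a union bound over pairs using Proposition~\ref{prop g is typical} to get $\P_{D^I}(\cal G(R_0)^c)$ small, combined with Proposition~\ref{prop short rets} for $\P_{D^I}(\cal F)\ge\tfrac12$, and then $\P(\cal F\cap\cal G)\ge \P(\cal F)-\P(\cal G^c)$. Your observation that $\cal F^{ij}$ is the same event for every pair (since the renewal times $\xi_l$ involve all clusters) is correct and slightly sharpens the paper's presentation; your flagged subtlety (ii) is in fact a non-issue, because the events $G_l(r)$ and hence $\cal G^{ij}(r)$ do not actually depend on the parameter $r$ once $\eta_0=0$ is fixed, so $\cal G^{ij}(R_0)=\cal G^{ij}(R_0^{ij})$ and Proposition~\ref{prop g is typical} applies directly with $r=R_0^{ij}\ge R_0$.
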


\begin{proof}
First, Proposition~\ref{prop g is typical} states that, for every pair of clusters $i,j \in I$ such that $i < j$, the event $\cal G^{ij} (R_0^{ij})$ occurs except with a probability of at most $O \big( (\log R_0^{ij} )^{-1} \big)$. By a union bound over the at most $n^2$ pairs of clusters, $\cal G (R_0)$ occurs except with a probability of at most $O \big( n^2 (\log R_0)^{-1} \big)$. Assume that $R_0$ is sufficiently large to make this bound $\frac34$. Second, by Proposition~\ref{prop short rets}, if $D^I$ satisfies \eqref{stable def}, then $\P_{D^I} (\cal F) \geq \frac12$. The two bounds together imply that $\P_{D^I} (\cal F \cap \cal G (R_0)) \geq \frac14 > 0$.
\end{proof}

\begin{proof}[Proof of Proposition~\ref{d growth}]
Let $a \in \Z_{\geq 0}$ and $C \in \Stable_a$. By definition \eqref{reach well sep conds}, there is a partition of $C$ into $D^I \in \Tup$ that satisfies \eqref{stable def} and the associated random walks of which are initially separated by $R_0 \geq a$. Hence, by Propositions~\ref{fng implies sep} and \ref{fng implies finite sum}, there are $b, \gamma > 0$ such that, for any $\delta \in (0, \frac12)$, if $a$ is sufficiently large, then 
\[
\{ \cal F \cap \cal G (R_0) \} \stackrel{\eqref{fgro incl}}{\subseteq} \left\{ (\D_t^I)_{t \geq 0} \in \Growth_{\tilde R_0,b,\gamma,\delta} \right\} \subseteq \left\{ (\D_t^I)_{t \geq 0} \in \Growth_{\tilde a,b,\gamma,\delta} \right\},
\]
where $\tilde R_0 = R_0^{0.99}$ and $\tilde a = a^{0.99}$. The claimed bound then follows from Proposition~\ref{fng is typical}.
\end{proof}

\section{Cluster formation}\label{sec cluster formation}

In this section, we prove Theorem~\ref{reach well sep}, which concerns the formation of well separated clusters that can persist \eqref{reach well sep conds}. We do this in two stages. First, we use Lemma~\ref{depl small comps} to group elements into connected components with more than $m$ elements. Second, we partition these connected components into parts with between $m+1$ and $2m+1$ elements, before ``treadmilling'' them apart, one by one.

\subsection{Staying lemma}

We need a basic fact about CAT. If $U \subset_n \Z^d$ satisfies
\begin{equation}
\exists V = \{v_1, \dots, v_m\} \subset_m U: \quad \diam (V) \leq n \quad \text{and} \quad v_l \in \bdy \big( U \setminus \{v_l, \dots, v_m\} \big), \,\, 1 \leq l \leq m,
\label{all a seq bdy}
\end{equation}
then it is typical for CAT to ``stay'' at $U$, since it can activate the elements at $v_1, \dots, v_m$ and then simply return them. To state the next result, for a collection $\cal U$ of $n$-element subsets of $\Z^d$, define $T_{\cal U} := \inf \{t \geq 0: \C_t \in \cal U\}$.

\begin{lemma}[Staying lemma]\label{staying lem 2}
Let $\cal U$ be a collection of $n$-element subsets of $\Z^d$. If every $U \in \cal U$ satisfies \eqref{all a seq bdy}, and if there are $t \in \Z_{\geq 0}$ and $q > 0$ such that $\P_{C} (T_{\cal U} \leq t) \geq q$  for every $C \subset_n \Z^d$, then there is $p>0$ such that
\[
\P_{C} (\C_t \in \cal U) \geq p^t
\] 
for every $C \subset_n \Z^d$.
\end{lemma}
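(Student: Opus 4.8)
The plan is to combine a uniform lower bound on the probability that CAT ``stays'' at a configuration for one step with the hitting-time hypothesis, via the strong Markov property applied at $T_{\cal U}$.

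\emph{The one-step stay estimate.} First I would show that there is $c = c(n) > 0$ such that $\P_U (\C_1 = U) \geq c$ for every $U \in \cal U$. Fix such a $U$ and let $V = \{v_1, \dots, v_m\} \subset_m U$ be the subset afforded by \eqref{all a seq bdy}. Consider the event that CAT activates $v_1, \dots, v_m$ in this order and then transports $v_1, \dots, v_m$ back in this order; when it occurs, $\C_1 = (U \setminus V) \cup V = U$. This event has positive probability under $\P_U$: activating $v_1, \dots, v_m$ in order is possible because the $v_l$ are distinct elements of $U$, and transporting $v_1, \dots, v_m$ in order is possible because the containment $v_l \in \bdy (U \setminus \{v_l, \dots, v_m\})$ in \eqref{all a seq bdy} says precisely that, once $v_1, \dots, v_{l-1}$ have been returned to $U \setminus V$, the element $v_l$ lies on the boundary of the current set $(U \setminus V) \cup \{v_1, \dots, v_{l-1}\} = U \setminus \{v_l, \dots, v_m\}$. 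Since $v_1, \dots, v_m \in V$ and $\diam (V) \leq n$, the distances between consecutive activated or transported elements in the sequence $(v_1, \dots, v_m, v_1, \dots, v_m)$ are at most $n$, so the second bound of Lemma~\ref{poss implies typ} gives $\P_U (\C_1 = U) \geq a n^{-b}$ for the constants $a, b > 0$ there; set $c := a n^{-b}$. Crucially, this bound depends only on $n$ and the data of CAT, uniformly over $U \in \cal U$, and it is this uniformity --- supplied by the diameter bound $\diam (V) \leq n$ --- that is the point of requiring \eqref{all a seq bdy} of every member of $\cal U$.

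\emph{Iteration and conditioning on $T_{\cal U}$.} By the Markov property, from any $U \in \cal U$ each successive step returns to $U$ with probability at least $c$, so
\[
\P_U \big( \C_j = U \text{ for all } 1 \leq j \leq k \big) \geq c^k \quad \text{for every } k \geq 0 .
\]
Applying the strong Markov property at $T_{\cal U}$, and using that on $\{T_{\cal U} \leq t\}$ the chain occupies a state $\C_{T_{\cal U}} \in \cal U$ with $t - T_{\cal U} \leq t$ steps remaining,
\[
\P_C (\C_t \in \cal U) \geq \E_C \Big[ \1_{\{T_{\cal U} \leq t\}} \, \P_{\C_{T_{\cal U}}} \big( \C_j = \C_0 \text{ for all } 1 \leq j \leq t - T_{\cal U} \big) \Big] \geq c^t \, \P_C (T_{\cal U} \leq t) \geq c^t q ,
\]
where the middle inequality uses $c \leq 1$ (so $c^{t - T_{\cal U}} \geq c^t$) together with the one-step-stay bound, and the last uses the hypothesis. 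For $t \geq 1$, since $q \leq 1$ this yields $\P_C (\C_t \in \cal U) \geq c^t q^t = (cq)^t$; for $t = 0$, the hypothesis $\P_C (\C_0 \in \cal U) \geq q > 0$ forces every $n$-element subset of $\Z^d$ to belong to $\cal U$, so the claim is trivial. Taking $p = cq$ completes the proof.

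\emph{The main obstacle.} There is essentially one substantive point, the one-step stay estimate, and within it the only things to verify with care are (i) that \eqref{all a seq bdy} genuinely encodes an admissible ``activate $v_1, \dots, v_m$ then return them'' sequence for the CAT dynamics, and (ii) that $\diam (V) \leq n$ is exactly what keeps the resulting probability from degrading with the diameter of the configuration, so that $c$ is uniform over $\cal U$. Everything after that is bookkeeping with the Markov property.
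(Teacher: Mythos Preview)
Your proposal is correct and follows essentially the same approach as the paper: establish a uniform one-step ``stay'' bound $\P_U(\C_1 = U) \geq c$ from \eqref{all a seq bdy} via Lemma~\ref{poss implies typ}, then combine this with the hitting-time hypothesis through the strong Markov property at $T_{\cal U}$ to obtain $\P_C(\C_t \in \cal U) \geq q c^t \geq (qc)^t$. Your treatment is in fact slightly more careful than the paper's, as you explicitly address the $t=0$ case and spell out why $c^{t-T_{\cal U}} \geq c^t$.
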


\begin{proof}
Since $\P_C (T_{\cal U} \leq t) \geq q$ and $\{\C_t \in \cal U, T_{\cal U} \leq t\} \subseteq \{\C_{T_{\cal U}} = \cdots = \C_t, T_{\cal U} \leq t \}$, we have
\[
\P_{C} (\C_t \in \cal U) \geq q \P_{C} (\C_t \in \cal U \mid T_{\cal U} \leq t) \geq q \prod_{s=T_{\cal U}+1}^t \P_C ( \C_s = \C_{s-1} \mid \C_{T_{\cal U}} = \cdots \C_{s-1}, T_{\cal U} \leq t),
\]
for every $C \subset_n \Z^d$. 
By Lemma~\ref{poss implies typ}, there is $r > 0$ such that $\P_U (\C_1 = \C_0) \geq r$ 
for every $U$ that satisfies \eqref{all a seq bdy}. Hence, by the strong Markov property applied to $s \in \llb T_{\cal U} + 1, t \rrb$ and the fact that $\C_{s-1}$ satisfies \eqref{all a seq bdy} given that $\{\C_{T_{\cal U}} = \cdots = \C_{s-1}, T_{\cal U} \leq t \}$ occurs, we have
\[
\P_C ( \C_s = \C_{s-1} \mid \C_{T_{\cal U}} = \cdots \C_{s-1}, T_{\cal U} \leq t) \geq r,
\]
which implies the claimed bound with $p = qr$.
\end{proof}

\subsection{Application of Lemma~\ref{depl small comps}}

Lemma~\ref{depl small comps} states that, if some element of $\C_0$ contains an element that belongs to a connected components with $m$ or fewer elements, then $\C_1$ typically has fewer such elements. The next result states that, consequently, $\C_n$ typically has no such elements. In other words, the event
\[
\NoSmallComps_t := \cap_{x \in \C_t} \{|\comp_{\C_t} (x)| > m\}
\]
typically occurs for $t=n$. (We continue to use $\comp_C (y)$ to denote the set of elements that are connected to $y \in \Z^d$ in $C \subset \Z^d$.)

\begin{proposition}\label{form big comps}
There is $p>0$ such that
\begin{equation}\label{eq form conn comp}
\P_{C} (\NoSmallComps_n) \geq p,
\end{equation}
for every $C \subset_n \Z^d$.
\end{proposition}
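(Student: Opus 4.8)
The plan is to deduce Proposition~\ref{form big comps} from Lemma~\ref{depl small comps} together with the Staying Lemma (Lemma~\ref{staying lem 2}). Write $\cal U$ for the collection of $n$-element subsets $U$ of $\Z^d$ all of whose connected components have more than $m$ elements, so that $\NoSmallComps_t = \{\C_t \in \cal U\}$; recall $\scr R_t = \{x \in \C_t : |\comp_{\C_t}(x)| \le m\}$ from \eqref{dsc def}, and note that the first visit of $\C_t$ to $\cal U$ is exactly $\sigma := \inf\{t \ge 0 : \scr R_t = \emptyset\}$. Since $|\C_t| = n$ is conserved, $\cal U$ is a collection of $n$-element sets and $|\scr R_0| \le n$ always.

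First I would check that every $U \in \cal U$ satisfies the staying hypothesis \eqref{all a seq bdy}. Pick any connected component $W$ of $U$; by definition $|W| \ge m+1$, and because $W$ is a connected subset of $\Z^d$ with at most $n$ elements, $\diam(W) \le n-1$. Fix a spanning tree of $W$ and delete leaves one at a time, recording the deleted vertices in order as $v_m, v_{m-1}, \dots, v_1$; this runs for $m$ steps since $|W| \ge m+1$, and at each deletion the deleted vertex is adjacent in $\Z^d$ to the remaining (nonempty, connected) part of $W$, which is contained in $U$. Thus $V = \{v_1, \dots, v_m\} \subset_m U$ satisfies $\diam(V) \le \diam(W) \le n$ and $v_l \in \bdy(U \setminus \{v_l, \dots, v_m\})$ for every $l$, which is \eqref{all a seq bdy}.

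Next I would show $\P_C(\C_t \in \cal U \text{ for some } t \le n) \ge q$ for a constant $q > 0$ and every $C \subset_n \Z^d$, by iterating Lemma~\ref{depl small comps}. Since $\DepleteSmallComps_t$ forces the integer $|\scr R_{t+1}|$ to drop by at least one and $|\scr R_0| \le n$, at most $n$ successful depletions bring $\scr R$ to $\emptyset$. Formally, let $h(k)$ be the infimum of $\P_C(\scr R_s = \emptyset \text{ for some } s \le k)$ over $C \subset_n \Z^d$ with $|\scr R_0| \le k$; then $h(0) = 1$, and for $k \ge 1$ either $\scr R_0 = \emptyset$ already or $C$ meets the hypothesis of Lemma~\ref{depl small comps}, so $\P_C(\DepleteSmallComps_0) \ge p$, and on that event $|\scr R_1| \le k-1$, whence the Markov property at time $1$ (using conservation of numerosity, so that $\C_1 \subset_n \Z^d$) gives $h(k) \ge p\,h(k-1)$. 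Iterating, $h(n) \ge p^n =: q$, and since every $n$-element configuration has $|\scr R_0| \le n$, this gives $\P_C(\sigma \le n) \ge q$ with $\sigma = T_{\cal U}$.

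Finally, the Staying Lemma applied with this $\cal U$, with $t = n$, and with the constant $q$ just obtained yields some $p' > 0$ with $\P_C(\NoSmallComps_n) = \P_C(\C_n \in \cal U) \ge (p')^n$ for every $C \subset_n \Z^d$, i.e.\ \eqref{eq form conn comp}. I expect the only subtle point to be the bookkeeping in the iteration step: one has to match the $n$ available steps to the at most $n$ depletions needed (which is why the statement is for $n$ steps), and one should note that $\NoSmallComps$ need not be preserved once reached — this is precisely what the Staying Lemma absorbs, since it requires only that $\cal U$ be hit by time $n$, not that $\C_t \in \cal U$ at every intermediate time. The rest is a direct application of the cited lemmas.
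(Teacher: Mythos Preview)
Your proof is correct and follows essentially the same approach as the paper: iterate Lemma~\ref{depl small comps} to show $T_{\cal U} \le n$ with uniformly positive probability, then invoke the Staying Lemma to land in $\cal U$ at time exactly $n$. The only differences are presentational---you give an explicit spanning-tree leaf-deletion argument for \eqref{all a seq bdy} where the paper simply asserts it, and you phrase the iteration as an induction on $|\scr R_0|$ rather than bounding the probability of $\cap_{t<\tau}\DepleteSmallComps_t$ directly---but the underlying logic is identical.
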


\begin{proof} 
Define $\tau$ to be the first time $t \geq 0$ that $\NoSmallComps_t$ occurs. Recall the event $\DepleteSmallComps_t$, which is defined as $\{|\scr R_{t+1}| < |\scr R_t|\}$ in terms of $\scr R_t$, the set of $x \in \C_t$ that belong connected components of $\C_t$ with $m$ or fewer elements \eqref{dsc def}. The key observation is that
\begin{equation}\label{tau deplete}
\P_{C} (\tau \leq n) \geq \P_{C} (\cap_{t < \tau} \mathsf{DepleteSmallComps}_t).
\end{equation} 
Indeed, since $|\scr R_0| \leq n$, $|\scr R_t|$ can decrease at most $n$ times consecutively before time $\tau$. If $\{\tau \leq n\}$ occurs, then we can realize $\NoSmallComps_n$ by simply staying at $\C_\tau$ until time $n$.

Lemma~\ref{depl small comps} implies that there is $q_1 > 0$ such that
\[
\P_{C} (\cap_{t < \tau} \mathsf{DepleteSmallComps}_t) \geq q_1,
\]
for every $C \subset_n \Z^d$, while Lemma~\ref{staying lem 2} implies that there is $q_2 > 0$ such that
\[
\P_{C} (\NoSmallComps_n \mid \tau \leq n) \geq q_2.
\]
Indeed, Lemma~\ref{staying lem 2} applies to the collection $\cal U$ of $n$-element subsets of $\Z^d$, the elements of which exclusively belong to connected components with more than $m$ elements. This collection clearly satisfies \eqref{all a seq bdy}. Combining these bounds with \eqref{tau deplete} proves \eqref{eq form conn comp} with $p = q_1 q_2$.
\end{proof}

\subsection{A property of sets that have no small connected components}

The virtue of a set that consists exclusively of connected components with more than $m$ elements is that these components can be partitioned into parts that can be formed into lines and then ``treadmilled,'' one by one (Figures~\ref{nscFig} and \ref{treadFig}).

\begin{figure}
\includegraphics[width=0.8\textwidth]{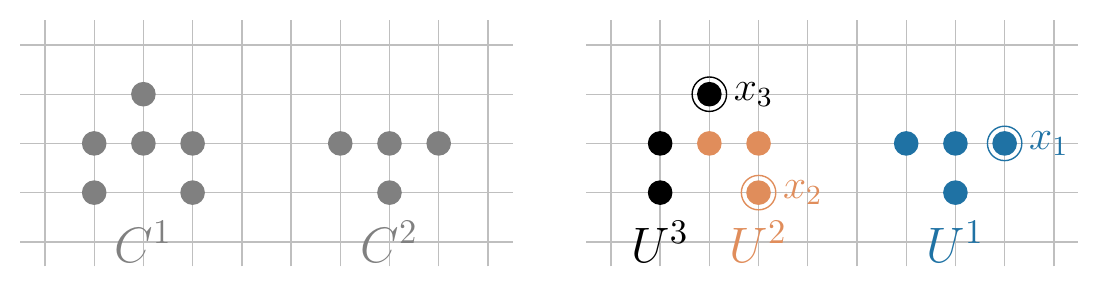}
\caption{An example of Lemma~\ref{imp of nosmallcomp} with $(d,m,n) = (2,2,10)$. When a set $C$ (left) has connected components ($C^1$ and $C^2$) with more than $m$ elements each, they can be partitioned into $(U^j)_j$, in such a way that each $U^j$ has between $m+1$ and $2m+1$ elements and contains an element $x_j$ (circled) that (i) abuts a ray which is empty of $U_k$ and (ii) is at least as far to the right as $x_k$, for each $k \geq j$. These properties enable the parts to be treadmilled to the right, one by one (Figure~\ref{treadFig}).}
\label{nscFig}
\end{figure}

\begin{lemma}\label{imp of nosmallcomp}
If every element of a configuration $C \subset_n \Z^d$ belongs to a connected component with more than $m$ elements, then there is a partition of $C$ into $(U^j)_{j=1}^J$, each part of which satisfies $|U^j| \in \llbracket m+1, 2m+1 \rrbracket$ and $\diam (U^j) \leq n$. Moreover, each $U^j$ contains some element $x_j$ for which the ray $(x_j+L_{1,\infty})$ is empty of $\cup_{k \geq j} U^k$ and $x_j \cdot e_1 \leq x_i \cdot e_1$, for every $i < j$.
\end{lemma}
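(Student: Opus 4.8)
The plan is to build the partition and its ordering in two stages: first handle each connected component of $C$ in isolation, producing for it an ordered list of parts; then merge all of these lists into a single global list sorted by one scalar statistic of each part. The observation that keeps this simple is that the lemma does \emph{not} require the parts $U^j$ to be connected -- only that they have the right cardinality and small diameter -- so within a component I am free to slice off essentially arbitrary chunks of the correct size.

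\emph{Stage one (single component).} Fix a connected component $W$ of $C$; by hypothesis $|W|\ge m+1$, and since $W$ is connected with $|W|\le n$, every subset of $W$ has diameter at most $|W|-1\le n-1$. I would peel parts off $W$ greedily. Set $W^{(1)}=W$, and having formed $W^{(i)}$: if $|W^{(i)}|\le 2m+1$, let $V^i=W^{(i)}$, pick $y^i$ an element of $V^i$ of maximal $e_1$-coordinate, and stop; otherwise let $y^i$ be an element of $W^{(i)}$ of maximal $e_1$-coordinate, let $V^i$ consist of $y^i$ together with any $m$ further elements of $W^{(i)}$, and set $W^{(i+1)}=W^{(i)}\setminus V^i$. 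Because $|W^{(i)}|\ge 2m+2$ in the continuing branch, we have $|W^{(i+1)}|=|W^{(i)}|-(m+1)\ge m+1$, so at every step $|W^{(i)}|\ge m+1$ and the process is well defined and terminates; a one-line check at the cutoff then shows $|V^i|\in\llbracket m+1,2m+1\rrbracket$ for every part, including the last. By construction $y^i\in V^i$ and $y^i$ is an $e_1$-maximal element of $V^i\cup V^{i+1}\cup\cdots$ (which equals $W^{(i)}$); in particular $y^1$ is an $e_1$-maximal element of all of $W$, so every element of $W$, hence of every part of $W$, has $e_1$-coordinate at most that of its component's first designated element.

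\emph{Stage two (merge).} Collect the parts $V^i$ over all connected components of $C$ and list them as $U^1,\dots,U^J$ in non-increasing order of the $e_1$-coordinate of their designated element, breaking ties arbitrarily; write $x_j$ for the designated element of $U^j$. The size and diameter bounds are inherited from stage one, and $x_j\cdot e_1\le x_i\cdot e_1$ for $i<j$ is immediate from the ordering. For the ray condition it suffices to show that every element of $\cup_{k\ge j}U^k$ has $e_1$-coordinate at most $x_j\cdot e_1$, since $x_j+L_{1,\infty}$ consists of points of $e_1$-coordinate strictly greater than $x_j\cdot e_1$. If $U^k$ with $k\ge j$ lies in the same component as $U^j$, then $U^k$ is contained in the remainder $W^{(i)}$ of which $x_j=y^i$ is $e_1$-maximal; if $U^k$ lies in a different component, then every element of $U^k$ has $e_1$-coordinate at most that of $U^k$'s designated element (by the last sentence of stage one), which is at most $x_j\cdot e_1$ by the sorting. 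Either way the bound holds, and the lemma follows.

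There is no serious obstacle here once one resists the temptation to make the parts connected: insisting on connected parts would be impossible in general -- a component shaped like a high-degree ``star'' cannot be partitioned into connected pieces of size $\llbracket m+1,2m+1\rrbracket$ -- and recognizing that connectivity is not needed is the only real idea the argument requires.
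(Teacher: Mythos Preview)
Your proof is correct in substance and follows essentially the same greedy idea as the paper's: repeatedly peel off a chunk containing the current $e_1$-maximal element, with the chunk drawn from that element's component and sized in $\llbracket m+1,2m+1\rrbracket$. The paper does this in one global pass (at step $j$, take $x_j$ to be the $e_1$-maximal element of what remains of all of $C$, then allocate it together with the appropriate number of elements from its component), whereas you do it component-by-component and then merge by sorting the designated elements. Both work; the one-pass version avoids the merge step and its verification.

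One small wrinkle in your argument deserves tightening. In Stage two, for $U^k$ in the \emph{same} component as $U^j$ with $k\ge j$, you assert that $U^k$ is contained in the remainder $W^{(i)}$ of which $x_j=y^i$ is $e_1$-maximal. Under ``breaking ties arbitrarily'' this can fail: if $y^{i'}\cdot e_1 = y^{i}\cdot e_1$ with $i'<i$, the sort may place $V^{i'}$ after $V^{i}$, yet $V^{i'}\not\subseteq W^{(i)}$. The fix is immediate and you already have it in hand: the argument you give for the different-component case---every element of $U^k$ has $e_1$-coordinate at most $x_k\cdot e_1\le x_j\cdot e_1$, since $x_k$ is $e_1$-maximal in the $W^{(i')}$ containing $U^k$---works verbatim for the same-component case as well, so the case split is unnecessary. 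Alternatively, break ties within a component by the Stage-one index.
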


\begin{proof}
Denote the connected components of $C$ by $(C^i)_{i \in I}$. Note that, since $|C^i| \geq m+1$, there is a partition $(n_l^i)_l$ of $|C^i|$ such that $n_l^i \in \llbracket m+1, 2m+1 \rrbracket$. To form $U^j$, if $x_j \in C^{i_j} \setminus \{U^k\}_{k<j}$ has the greatest $e_1$ component among elements of $C \setminus \{U^k\}_{k<j}$---and if we have allocated elements from component $i_j$ in exactly $l_j$ previous rounds---then we allocate $x_j$ and any other $n_{l_j+1}^{i_j} - 1$ elements of $C^{i_j}\setminus \{U^k\}_{k<j}$ to $U^j$. By construction, $(U^j)_j$ has the claimed properties. 
\end{proof}

\subsection{Treadmilling lemma}

The next result states that, if $C$ contains a line segment of $m+1$ or more elements, next to an unoccupied line segment, then it is typical for the former segment to be translated into the latter. We refer to this as {\em treadmilling} because it can be realized by activation at the ``tail'' of the segment, followed by transport to its ``head.'' Recall that, for integers $i\leq j$, $L_{i,j}$ denotes the line segment $\{ke_1: i \leq k \leq j\}$. For example, if $x \in \Z^d$, then $(x+L_{0,k-1})$ is the line segment of length $k$, with endpoints at $x$ and $x+(k-1)e_1$. 

\begin{figure}
\includegraphics[width=0.8\textwidth]{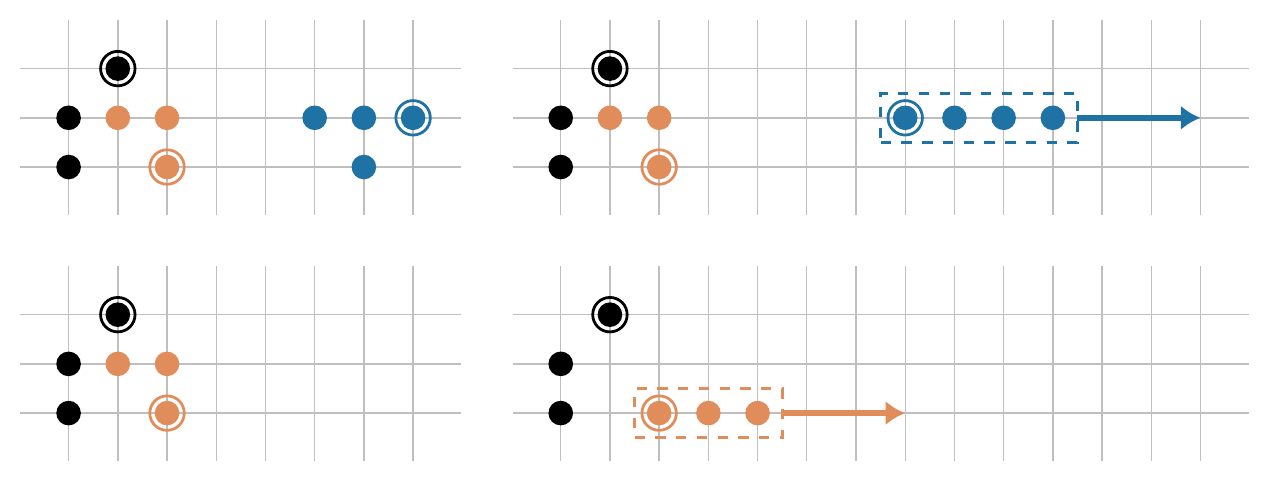}
\caption{Continuation of Figure~\ref{nscFig}. If a set has no small connected components, then these components can be partitioned into parts that can be treadmilled to the right, one at a time, using Lemma~\ref{lem treadmill 2}.}
\label{treadFig}
\end{figure}

\begin{lemma}[First treadmilling lemma]\label{lem treadmill 1}
Suppose that $C \subset_n \Z^d$ contains $(x+L_{0,k-1})$ for some $x \in \Z^d$ and $k \geq m+1$. There is $p>0$ such that, if, for some $l \in \llbracket 1,m \rrbracket$, the segment $(x+L_{k,k+l-1})$ is disjoint from $C$, then
\begin{equation}\label{eq treadmill 1}
\P_{C} \left( \C_1 = \big(C \setminus (x+L_{0,k-1}) \big) \cup (x+L_{l,k+l-1}) \right) \geq p.
\end{equation}
\end{lemma}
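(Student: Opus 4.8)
The plan is to realize the treadmilling move by dictating an explicit sequence of the $2m$ substeps that constitute one step of CAT, and then to invoke the second bound of Lemma~\ref{poss implies typ} to turn ``this sequence is possible'' into ``this sequence has probability at least a positive constant.'' By translation invariance we may take $x = 0$, so that $L_{0,k-1} \subseteq C$ and $L_{k,k+l-1}$ is disjoint from $C$; write $C' := C \setminus L_{0,k-1}$ for the untouched part of the configuration.

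For the activation substeps I would activate the $m$ leftmost sites of the segment, $0, e_1, \dots, (m-1)e_1$, in that order; this is legal because $k \ge m+1$ forces all of these to lie in $C$, and consecutive activated sites are at distance $1$. The surviving part of the segment is then $L_{m,k-1}$, which is nonempty precisely because $k \ge m+1$, and the set after activation is $C' \cup L_{m,k-1}$. For the transport substeps I would add back, one at a time, first the ``middle'' sites $(m-1)e_1, (m-2)e_1, \dots, l e_1$ (there are $m-l$ of these, none when $l=m$) and then the ``head'' sites $k e_1, (k+1)e_1, \dots, (k+l-1)e_1$. At each transport the target site lies on the boundary of the current set --- $(m-1)e_1$ abuts $me_1 \in L_{m,k-1}$, each later middle or head site abuts its predecessor in the sequence, and $ke_1$ abuts $(k-1)e_1 \in L_{m,k-1}$ --- and is unoccupied: the middle sites were just removed by activation and lie in $L_{0,k-1}$, hence not in $C'$, while the head sites lie in $L_{k,k+l-1}$, hence not in $C$ and not in $C'$. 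So this sequence of substeps is possible, and its net effect is to replace $L_{0,k-1}$ by $L_{l,k+l-1}$, producing exactly the configuration in \eqref{eq treadmill 1}.

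It then remains to bound this sequence's probability below by a $C$-independent constant. Every pair of consecutive activated or transported sites is at distance $1$, with the single exception of the transition from the last activation $(m-1)e_1$ to the first transport, where $\phi$ equals $1$ when $l<m$ (the two sites coincide) and $\max\{k-m+1,1\}^{-\beta}>0$ when $l=m$; thus the sequence has positive probability. All sites appearing in it lie in $L_{0,k+l-1}$, whose diameter is $k+l-1 \le n+m$ because $l \le m$ and $k \le n$ (the latter since $C$ contains the $k$ distinct points of $L_{0,k-1}$). The second bound of Lemma~\ref{poss implies typ}, applied with $C_0 = C$ and this sequence, therefore gives $\P_C(\C_1 = (C\setminus L_{0,k-1})\cup L_{l,k+l-1}) \ge a(n+m)^{-b}$ with $a,b>0$ the fixed constants of that lemma, so $p := a(n+m)^{-b}$ works. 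There is no real obstacle here beyond the bookkeeping; the points that need care are the nonemptiness of the remnant $L_{m,k-1}$ (which is exactly where the hypothesis $k \ge m+1$ is used), the degenerate case $l=m$ with no middle sites, and the bound $k \le n$ that keeps $p$ independent of $C$ and $x$.
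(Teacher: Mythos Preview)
Your proof is correct and follows essentially the same approach as the paper: activate the tail $L_{0,m-1}$, transport to $L_{l,m-1} \cup L_{k,k+l-1}$, and invoke Lemma~\ref{poss implies typ} with a diameter bound of order $n$. One minor inaccuracy---when $l<m$, the jump from the last middle site $l e_1$ to the first head site $k e_1$ has distance $k-l$, not $1$---does not affect the argument, since only the diameter bound on $\{x_j\}$ is used.
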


\begin{proof}
To treadmill $L_{0,k-1}$ a number of sites $l \leq m$ in the $e_1$ direction, we activate the ``tail,'' $(x+L_{0,m-1})$ and transport $l$ elements to the ``head,'' $(x+L_{l,k+l-1})$. We return the remaining $m-l$ elements back to $(x+L_{l,m-1})$. Hence, the probability in question is at least
\[
\P_{C} \left( \AA_0 = (x+L_{0,m-1}), \T_0 = (x+L_{l,m-1}) \cup (x+L_{k,k+l-1}) \right) > 0.
\]
In fact, since the diameter of $(x+L_{0,k+l-1})$ is at most $2n$, Lemma~\ref{poss implies typ} implies a lower bound of $p>0$, depending on $n$ only.
\end{proof}

The second treadmilling lemma forms a subset of $m+1$ or more elements of $C$ into a line segment and then repeatedly applies Lemma~\ref{lem treadmill 1} to treadmill this segment a desired distance (Figure~\ref{treadFig}).

\begin{lemma}[Second treadmilling lemma]\label{lem treadmill 2}
Let $r \in \Z_{\geq 2m}$ and $C \subset_n \Z^d$. There is $p>0$ such that, if $U \subseteq C$ has a diameter of at most $n$, satisfies $|U| \in \llbracket m+1,2m+1 \rrbracket$, and contains an $x$ for which $(x+L_{1,r})$ is disjoint from $C$, 
then 
\begin{equation}\label{eq treadmill 2}
\P_{C} (\C_{r+2} = (C \setminus U) \cup (x+L_{r-|U|+1,r}) ) \geq p^r.
\end{equation}
\end{lemma}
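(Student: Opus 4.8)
The plan is to realize the target configuration $(C \setminus U) \cup (x + L_{r - |U| + 1, r})$ by prescribing an explicit sequence of $r + 2$ transitions, each having probability bounded below by a constant depending on $n$ only; the Markov property then yields a bound of the form $p^r$. Write $k = |U| \in \llbracket m+1, 2m+1 \rrbracket$ and $s = r - k + 1$, and note that $0 \leq s \leq r$ because $m + 1 \leq k \leq 2m+1 \leq r + 1$. The hypothesis that $x + L_{1,r}$ is disjoint from $C$ provides an empty ``runway'' of $r$ sites immediately to the right of $x$, along which we will slide a segment.

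\emph{Forming a line (two steps).} First I would show that, in two steps and without moving any element of $C \setminus U$, CAT can form $U$ into the segment $x + L_{0, k-1}$, whose only element in $C$ is $x$. In the first step, activate $m$ elements of $U \setminus \{x\}$ (possible since $|U \setminus \{x\}| = k - 1 \geq m$) and transport them one at a time to $x + e_1, \dots, x + m e_1$; each of these sites lies in the runway and is adjacent to the partial set already built, so the transition is legal and produces $(C \setminus U) \cup \{x\} \cup W \cup (x + L_{1,m})$, where $W \subseteq U$ has $t := k - 1 - m \in \llbracket 0, m \rrbracket$ elements. In the second step, activate $W$ together with the $m - t$ rightmost sites of $x + L_{1,m}$ and transport the $m$ activated elements to $x + (t+1)e_1, \dots, x + (t+m) e_1$ (legal since $t + m = k - 1 \leq r$), producing $(C \setminus U) \cup (x + L_{0, k-1})$. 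In both steps the activation and transport sequence is contained in a ball of radius $O(n)$, so Lemma~\ref{poss implies typ} bounds the probability of each step below by a constant depending on $n$ only; call the product of these two bounds $p_1$.

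\emph{Treadmilling and staying.} With $x + L_{0,k-1}$ in place, I would apply Lemma~\ref{lem treadmill 1} with $l = 1$ for each of the next $s$ steps, sliding the segment one site to the right per step: at the $j$th such step the segment is $(x + j e_1) + L_{0,k-1}$ and its one-site head extension $x + (k + j) e_1$ has position $k + j \leq k + s - 1 = r$, so it lies in the runway and is disjoint from the current configuration, which is what Lemma~\ref{lem treadmill 1} requires; hence each such step has probability at least some $p_2 > 0$ depending on $n$ only. After these $s$ steps the configuration is $(C \setminus U) \cup (x + L_{s, k-1+s}) = (C \setminus U) \cup (x + L_{r - k + 1, r})$, the target. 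It remains to stay at the target for the remaining $r + 2 - (2 + s) = k - 1 \geq m$ steps: the target contains the segment $x + L_{r-k+1,r}$ of $k \geq m+1$ elements, and activating its rightmost $m$ elements and transporting them straight back realizes $\C_{t+1} = \C_t$ with probability at least some $p_3 > 0$ depending on $n$ only (this is the mechanism in the proof of Lemma~\ref{staying lem 2}, using that the relevant activation and transport sequence has diameter at most $m - 1$, via Lemma~\ref{poss implies typ}). By the Markov property, $\P_C\big(\C_{r+2} = (C \setminus U) \cup (x + L_{r-k+1, r})\big) \geq p_1 p_2^{s} p_3^{k-1}$, and since $s \leq r$ and $k - 1 \leq 2m$, this is at least $p^r$ for a suitable $p > 0$ depending on $n$ only.

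\emph{Main obstacle.} The only delicate part is the two-step line formation: one must pick which elements to activate and where to transport them so that the result is exactly $x + L_{0,k-1}$ (correctly oriented along the runway), so that $C \setminus U$ is untouched, and so that every activation and transport distance is $O(n)$ — the last point being essential for a diameter-agnostic probability bound. The treadmilling and staying phases are then immediate from Lemmas~\ref{lem treadmill 1} and \ref{staying lem 2}. A minor point to handle separately is the degenerate case $s = 0$, which forces $r = 2m$ and $k = 2m+1$; there the treadmilling phase is empty and $x + L_{0,k-1}$ already equals the target $x + L_{0, 2m}$, so all $r$ post-formation steps are stays.
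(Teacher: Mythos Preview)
Your proof is correct and follows essentially the same three-stage structure as the paper: form $U$ into the segment $x+L_{0,k-1}$ in two steps, treadmill it along the runway to $x+L_{r-k+1,r}$, then stay. The only cosmetic difference is that you treadmill one site per step (applying Lemma~\ref{lem treadmill 1} with $l=1$ a total of $r-k+1$ times) whereas the paper treadmills $m$ sites per step (taking $\lceil (r-k+1)/m\rceil$ applications); correspondingly you stay for exactly $k-1$ steps while the paper invokes Lemma~\ref{staying lem 2} for up to $r$ steps. Both routes give a bound of the form $p_0^{O(r)}$ with $p_0$ depending only on $n$, which is what is needed.
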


\begin{proof}
First, we form $U$ into the line segment $(x+L_{0,|U|-1})$ using two steps. Specifically, we claim that there is $q_1 > 0$ such that
\begin{equation}\label{tm2 bd1}
\P_{C} \big( \C_2 = (C \setminus U) \cup (x+L_{0,|U|-1}) \big) \geq q_1.
\end{equation} 
Indeed, we use the first step to activate any $m$ elements of $U \setminus \{x\}$ and transport them to $(x+L_{1,m})$. In the second step, we activate the other $|U|-(m+1)$ elements along with $(x+L_{m - (|U|-(m+1))+1,m})$---so that we activate $m$ elements in total---and transport the elements to $(x+L_{m - (|U|-(m+1))+1,|U|-1})$. (Note that, if $|U| = m+1$, then the second step leaves $\C_1$ unchanged.) 
This shows that the probability in \eqref{tm2 bd1} is positive. The bound then follows from Lemma~\ref{poss implies typ} because the preceding steps entail $\diam (\AA_j \cup \T_j) \leq 2n$ for $j \in \{0,1\}$.

Second, we treadmill the line segment $(x+L_{0,|U|-1})$ to $(x+L_{r-|U|+1,r})$ in $s = \lceil \frac{r - |U| + 1}{m} \rceil \leq r$ additional steps, where $\lceil \cdot \rceil$ denotes the ceiling function. By the Markov property applied to each time in $\llbracket 2, s+1\rrbracket$ and Lemma~\ref{lem treadmill 1}, we find that
\begin{equation}\label{tm2 bd2}
\P_{C} \big( \C_{s+2} = (C \setminus U) \cup (x+L_{r-|U|+1,r}) \bigm\vert \C_2 = D \big) \geq q_2^s,
\end{equation}
where $D$ denotes the configuration in \eqref{tm2 bd1} and $q_2$ is positive.

To conclude, we note that if $\cal U = \{(C\setminus U) \cup (x + L_{r-|U|+1,r})\}$, then \eqref{tm2 bd2} shows that $\{T_{\cal U} \leq r+2\}$ occurs with a probability of at least $q_2^s$. It is easy to verify that $\cal U$ satisfies the hypotheses of the staying lemma (Lemma~\ref{staying lem 2}), which implies that
\[
\P_{C} ( \C_{r+2} = (C\setminus U) \cup (x + L_{r-|U|+1,r}) \mid T_{\cal U} \leq r+2) \geq q_3^r,
\]
for $q_3 >0$, for every $C \subset_n \Z^d$. 
Combining the three preceding bounds yields
\[
\P_{C} \big( \C_{r+2} = (C \setminus U) \cup (x+L_{r-|U|+1,r} \big) \geq q_1 q_2^s q_3^r.
\]
Hence, \eqref{eq treadmill 2} holds with $p = q_1q_2q_3$.
\end{proof}

\subsection{Proof of Theorem~\ref{reach well sep}}

Following Figure~\ref{treadFig}, we will use Lemma~\ref{form big comps} to split $\C_0$ into connected components with more than $m$ elements, and then repeatedly apply Lemma~\ref{lem treadmill 2} to treadmill parts of each component in the $e_1$ direction, until they satisfy \eqref{reach well sep conds}. We will treadmill the parts in an order that ensures that each has an ``open lane.''

\begin{proof}[Proof of Theorem~\ref{reach well sep}]
Given $\C_n$, when $\NoSmallComps_n$ occurs, $\C_n$ has a partition into $(U^j)_{j=1}^J$ with the properties stated by Lemma~\ref{imp of nosmallcomp}. 
In terms of this partition, we define the event 
\[
\TreadPart_j = \left\{ \C_{s_j} = (\C_{s_{j-1}} \setminus U^j ) \cup V^j \right\},
\]
where
\begin{itemize}
\item $r_j = (r+n)(n-j)$ is how far we treadmill the $j$\textsuperscript{th} part;
\item $s_j = \sum_{i\leq j} (r_i + 2)$ is the time needed by Lemma~\ref{lem treadmill 2} to treadmill the first $j$ parts accordingly; and 
\item $V^j = (x_j + L_{r_j - |U^j| + 1, r_j})$ is the line segment that results from treadmilling the $j$\textsuperscript{th} part.
\end{itemize}

We claim that
\begin{align}
\P_{C} \left(\C_{2n^2r} \in \Stable_{r}\right) & \geq \E_{C} \left[ \P_{\C_n} \left( \C_{2n^2r-n} \in \Stable_{r} \right) \1_{\NoSmallComps_n} \right] \nonumber \\ 
&\geq \E_{C} \left[ \P_{\C_n} \left( \cap_{j\leq J} \TreadPart_j \cap \Stay \right) \1_{\NoSmallComps_n} \right], \label{stable to tread}
\end{align}
where $\Stay$ abbreviates $\cap_{t=s_J+1}^{2n^2r-n} \{\C_t = \C_{t-1}\}$.\footnote{A simple calculation shows that $r \geq 2n$ suffices to ensure that $s_J + 1 \leq 2n^2 r - n$ since $J \leq n$.} The first inequality follows from the tower rule and the Markov property applied to time $n$. The second inequality holds because, when $\cap_{j\leq J} \TreadPart_j$ and $\Stay$ occur, we have
\[
\C_{2n^2r-n} = \C_{s_J} = \cup_{j \leq J} V^j.
\]
When this event occurs, so too does $\{\C_{2n^2r-n} \in \Stable_{r}\}$ because $(V^j)_{j \leq J}$ satisfies \eqref{reach well sep conds}. Indeed, the properties of $(U^j)_{j \leq J}$ from Lemma~\ref{imp of nosmallcomp} include that $|U^j| \in \llbracket m+1,2m+1 \rrbracket$, hence the same is true of $|V^j|$. Additionally, $x_l \cdot e_1 \leq x_k \cdot e_1$ for $k < l$, which implies that  
\[
\dist (V^j, V^{\neq j}) \geq \min_{i \neq j} | r_i - |U_i| - r_j | \geq r.
\]

We bound below \eqref{stable to tread} with three lemmas; we verify their hypotheses at the end. First, Lemma~\ref{form big comps} states that
\begin{align}
\P_{C} (\NoSmallComps_n) &\geq q_1.\nonumber
\intertext{Second, by the Markov property applied at time $s_{j-1}$ and Lemma~\ref{lem treadmill 2}, we have}
\P_{\C_n} \left( \TreadPart_j \bigm\vert \cap_{i<j} \TreadPart_i \right) &\geq q_2^{r_j} \1_{\NoSmallComps_n}.\label{tread bound}
\intertext{Third, by Lemma~\ref{staying lem 2} and the trivial bound $2n^2r - n - s_J \leq 2n^2 r$, we have} 
\P_{\C_n} \left( \Stay \bigm\vert \cap_{j \leq J} \TreadPart_j \right) &\geq q_3^{2n^2r} \1_{\NoSmallComps_n}.\label{stay bound}
\end{align}
(Each of $q_1, q_2, q_3$ is a positive number.)

These bounds together imply that
\begin{equation}\label{eco1}
\E_{C} \left[ \P_{\C_n} \left( \cap_{j=1}^J \TreadPart_j \cap \Stay \right) \1_{\NoSmallComps_n} \right]
\geq q_1 q_2^{\sum_j r_j} q_3^{2n^2r},
\end{equation}
where the exponent on $q_2$ satisfies
\[
\sum_{j \leq J} r_j \leq n^2 (r+n) \leq 2n^2 r.
\]
The first inequality uses the fact that $J \leq n$ and $r_j \leq (r+n)n$; the second uses the assumption that $r \geq 2n$. By substituting this bound into \eqref{eco1}, and then \eqref{eco1} into \eqref{stable to tread}, we find that
\[
\P_{C} \left(\C_{2n^2r} \in \Stable_{r}\right) \geq q_1 (q_2 q_3)^{2n^2r}.
\]
In particular, \eqref{eq reach well sep} holds with $p = (q_1 q_2 q_3)^{2n^2}$.

It remains to verify the hypotheses of Lemma~\ref{lem treadmill 2} and Lemma~\ref{staying lem 2} that we used to obtain \eqref{tread bound} and \eqref{stay bound}.
\begin{itemize}
\item Only one hypothesis of Lemma~\ref{lem treadmill 2} is not obviously satisfied---namely, that $U^j$ contains an $x$ for which $(x+L_{1,r_j})$ is disjoint from $\C_{s_{j-1}}$. In fact, $x_j$ satisfies this requirement because, by Lemma~\ref{imp of nosmallcomp}, the ray $x_j + L_{1,\infty}$ is empty of $\{U^i\}_{i \geq j}$ and, when $\cap_{i<j} \TreadPart_i$ occurs, the rest of $\C_{s_{j-1}}$ consists of $\{V^i\}_{i<j}$, which is at least a distance $r_{j-1} - n > r_j$ away from $x_j$.
\item Our use of Lemma~\ref{staying lem 2} requires that, for each $t \in \llbracket s_J+1, 2nr^2 - n \rrbracket$, $\C_t$ contains an $m$-element subset that satisfies \eqref{seq boundary}. But every $V^j$ contains such a subset when $\cap_{j \leq J} \TreadPart_j$ and $\cap_{u = s_J+1}^{t-1} \{\C_u = \C_{u-1} \}$ occur. 
\end{itemize}
\end{proof}

\section{Irreducibility}\label{sec irred}

This section completes the proof of Theorem~\ref{rec to trans} by establishing that CAT is irreducible on $\wtProg_{m,n}$ (Proposition~\ref{irred}). Denote the first time that CAT reaches $U \subset_n \Z^d$ by $T_U := \inf \{t \geq 0: \C_t = U\}$ and further denote $T_{\wt U} := \inf \{t \geq 0: \wt \C_t = \wt U\}$. 
To be irreducible on $\wtProg_{m,n}$, CAT must satisfy
\begin{equation}\label{irred cond}
\forall U, V \in \Prog_{m,n}, \quad \P_{U} \big( T_{\wt V} < \infty \big) > 0.
\end{equation}
In other words, CAT can reach any such $\wt V$ from any such $U$. We prove this by showing that CAT can reach a line from any $U$, and it can reach any $\wt V$ from a line. We continue to denote $L_{1,\nn} = \{e_1, 2e_1, \dots, \nn e_1\}$.

\begin{proposition}[Set to line]\label{set to line}
For every $U \in \Prog_{m,n}$, 
\[
\P_U \big( T_{\wt{L}_{1,\nn}} < \infty \big) > 0.
\]
\end{proposition}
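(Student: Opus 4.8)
The plan is to read this off directly from Lemma~\ref{line wpp}, which already does all the work: it asserts that from an \emph{arbitrary} $n$-element configuration (for any $n>m$) CAT reaches $\cal L$ in a fixed number of steps with a probability that, although it may degrade with the diameter, is strictly positive. So no new analysis of the dynamics is needed here; what remains is only some bookkeeping to translate ``reaches $\cal L$'' into ``reaches $\wt L_{1,\nn}$.''

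Concretely, I would fix $U\in\Prog_{m,n}$ and take $t=t(\nn)$ to be the time appearing in the general-$\nn$ form of Lemma~\ref{line wpp}, i.e.\ $t=(\nn-1)/\kk$ if $\kk\mid(\nn-1)$ and $t=\lceil(\nn-1)/\kk\rceil$ otherwise. Lemma~\ref{line wpp} then gives
\[
\P_U\big(\C_t\in\cal L\big)\ \geq\ c_1\,\diam(U)^{-c_2},
\]
and since $U$ is a finite set, $\diam(U)<\infty$, so the right-hand side is strictly positive. On the event $\{\C_t\in\cal L\}$ we have $\C_t=L_{1,k}+x$ for some $k\in\Z_{\geq1}$ and $x\in\Z^\dim$; but $|\C_t|=|U|=\nn$ by conservation of numerosity \eqref{mass conservation}, which forces $k=\nn$, so $\C_t$ is a translate of $L_{1,\nn}$ and hence $\wt\C_t=\wt L_{1,\nn}$. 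Consequently $T_{\wt L_{1,\nn}}\leq t$ on $\{\C_t\in\cal L\}$, and
\[
\P_U\big(T_{\wt L_{1,\nn}}<\infty\big)\ \geq\ \P_U\big(\C_t\in\cal L\big)\ \geq\ c_1\,\diam(U)^{-c_2}\ >\ 0 .
\]

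I do not expect any real obstacle: the substance lives entirely in Lemma~\ref{line wpp}, whose proof is the elementary construction of selecting the element $x\in U$ of greatest $e_1$-coordinate and, over the $t$ steps, repeatedly activating $\kk$ elements not yet on the segment $\{x,x+e_1,x+2e_1,\dots\}$ and transporting them to extend that segment (the final step also depositing any remaining elements at $x-e_1,x-2e_1,\dots$ so that all $\nn$ elements end up on one line), positivity then following from Lemma~\ref{poss implies typ}. If one preferred a self-contained proof of Proposition~\ref{set to line}, I would simply inline that construction; note in particular that the progressive-boundary hypothesis $U\in\Prog_{m,n}$ is never used — the statement holds verbatim for every $\nn$-element subset of $\Z^\dim$ with $\nn>\kk$.
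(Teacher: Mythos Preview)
Your proposal is correct and takes essentially the same approach as the paper, which simply says the result ``directly follows from Proposition~\ref{line wpp}.'' Your additional bookkeeping (invoking conservation of numerosity to identify the element of $\cal L$ reached as a translate of $L_{1,\nn}$, and noting that the $\Prog_{m,n}$ hypothesis is superfluous) is accurate and makes explicit what the paper leaves implicit.
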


\begin{proposition}[Line to set]\label{line to set}
For every $V \in \Prog_{m,n}$,
\[
\P_{L_{1,\nn}} \big( T_V < \infty \big) > 0.
\]
\end{proposition}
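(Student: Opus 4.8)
The plan is to prove Proposition~\ref{line to set} by induction on the number $n$ of elements, using the treadmilling lemmas of this section to move and reshape sub-clusters and a ``frozen reservoir'' coupling to run the dynamics on fewer elements inside CAT. In the base case $n=m+1$ the class $\Prog_{m,m+1}$ is exactly the collection of connected $(m+1)$-element sets: a connected ordering of the set supplies a witness for \eqref{seq boundary}, and conversely \eqref{seq boundary} forces connectedness. Given a connected target $V$ with $m+1$ elements, I would first treadmill $L_{1,n}$ (which is connected with $n=m+1$ elements, so Lemmas~\ref{lem treadmill 1}--\ref{lem treadmill 2} apply) to a position adjacent to $V$, and then in a single step keep one element $p\in V$, activate the other $m$, and transport them to the remaining elements of $V$ in a connected order starting from $p$; Lemma~\ref{poss implies typ} lower-bounds the probability of that step. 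Throughout, the translation invariance of CAT lets me pass between ``reaching $V$ from $L_{1,n}$'' and ``reaching a translate of $V$ from the corresponding translate of $L_{1,n}$.''

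For the inductive step, fix $V\in\Prog_{m,n}$ with $n\ge m+2$, a witness $\{x_1,\dots,x_m\}$ for \eqref{seq boundary}, and the corresponding ``core'' $V_0:=V\setminus\{x_1,\dots,x_m\}$, so $|V_0|=n-m\ge 2$. The key elementary observation is that $V$ is reached in one step from \emph{any} configuration $W$ with $V_0\subseteq W$ and $|W|=n$: activate the $m$ elements of $W\setminus V_0$ (the first activation lands there with probability $m/n$, the later ones with positive probability since $\phi>0$), and transport to $x_1,\dots,x_m$ in that order, each $x_j$ lying on $\bdy\big(V_0\cup\{x_1,\dots,x_{j-1}\}\big)=\bdy\big(V\setminus\{x_j,\dots,x_m\}\big)$ by the witness property and each transport having positive probability regardless of distances. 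So it suffices to reach, from $L_{1,n}$, some $W\supseteq V_0$; I would take $W=V_0\cup B$, where $B$ is a connected $m$-element ``pendant blob'' glued to $V_0$ at a single site $v^\ast\in V_0$ and otherwise far from $V_0$ (so that $W$ itself lies in $\Prog_{m,n}$, with core $V_0$ and chain $B$). To reach $W$ I would assemble it component-by-component: the component $\{v^\ast\}\cup B$ has $m+1$ elements and can be formed and treadmilled into place by the base-case construction and Lemma~\ref{lem treadmill 2}, while the remaining components---those of $V_0\setminus\{v^\ast\}$, a set of only $n-m-1<n$ elements---are produced by invoking the inductive hypothesis for the relevant lower-order CAT and running that trajectory inside $n$-element CAT with the already-placed elements frozen. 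Freezing is legitimate because, with positive probability at each step, CAT never activates a frozen element (the first activation avoids it with probability $\ge (n-1)/n$, later activations can be conditioned to avoid it) and never transports onto one, and conditioned on this the un-frozen elements evolve exactly according to the lower-order CAT; carrying out each sub-reshaping far from all frozen elements also rules out collisions between the moving cluster and the frozen sites, after which a final treadmilling step (Lemma~\ref{lem treadmill 2}) and Lemma~\ref{staying lem 2} bring the component to rest at its target location.

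The main obstacle is precisely this component-by-component assembly. It is genuinely constrained by the fact that CAT can never relocate an isolated element---transport always lands on the boundary of what remains---so a configuration with well-separated small components cannot be produced by transporting those components around. In particular, the tempting shortcut ``$V$ minus one suitable element lies in $\Prog_{m,n-1}$'' is false: for $m=2$, $n=4$, the union of two far-apart dominoes lies in $\Prog_{2,4}$, yet deleting any one of its four elements leaves a disconnected, hence non-progressive, three-element set. The fix is to create each small component at its target location by \emph{shedding} it from a larger (at least $(m+1)$-element) sub-cluster that has been treadmilled next to that location, which is exactly the mechanism built into Lemmas~\ref{lem treadmill 1}--\ref{staying lem 2}, and to order these operations (placing components from the far end inward, keeping the roving cluster large enough to treadmill until its last shedding) so that the element budget works out. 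Making this ordering and budgeting precise, together with verifying that the lower-order trajectories supplied by the inductive hypothesis can always be run collision-free against the frozen elements, is the technical heart of the proof; everything else reduces to Lemma~\ref{poss implies typ} and the positivity of $\phi$.
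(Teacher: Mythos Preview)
Your approach---induction on $n$ with $m$ fixed---is different from the paper's induction on $m$, and as written it has a genuine gap. Your plan reduces reaching $V$ to reaching $W=V_0\cup B$ and then assembles $W$ component by component, using the inductive hypothesis (or the base case) on each component. But $W$ can be \emph{connected}, and then there is only one component and your assembly gives no reduction at all. Concretely, take $m=2$, $n=9$, and $V$ the $3\times 3$ square in $\Z^d$. The square is $2$-connected, so every two-element witness leaves $V_0$ connected; hence whatever $v^\ast$ and pendant $B$ you choose, $W$ is a single connected $9$-element set. Your base case handles only connected $(m{+}1)$-element sets and your inductive hypothesis only $n'<n$, so neither applies to this $W$ and the induction does not close. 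Your write-up also implicitly assumes $v^\ast$ is isolated in $V_0$ (so that $\{v^\ast\}\cup B$ is a component of exactly $m+1$ elements and ``the remaining components are those of $V_0\setminus\{v^\ast\}$''); this assumption already fails whenever $V_0$ is connected, and more generally there is no reason $V_0$ should have an isolated vertex.

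The paper sidesteps this by inducting on $m$ instead of $n$. The point is that for $V\in\Prog_{m+1,n'}$ with witness $\{y_1,\dots,y_{m+1}\}$ one \emph{always} has $V\setminus\{y_{m+1}\}\in\Prog_{m,\,n'-1}$ (this is exactly \eqref{induc key obs}), so the $(m{+}1)$-problem reduces cleanly to the $m$-problem on one fewer element, regardless of whether $V$ or $V_0$ is connected. The paper then runs the $m$-CAT trajectory inside $(m{+}1)$-CAT with one element parked far away (Lemma~\ref{no interference lemma}) and brings that element back in a final step. The base case $m=1$ (Proposition~\ref{base of set to line induction}) is itself an induction on $n$, but there each step of the induction only has to place a \emph{single} extra element, which avoids both the budget and the connectedness obstructions you encounter.
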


\begin{proof}[Proof of Proposition~\ref{irred}]
The inequality \eqref{irred cond} follows from the strong Markov property applied to $T_{\wt L_{1,\nn}}$ and Propositions~\ref{set to line} and \ref{line to set}. Hence, CAT is irreducible on $\wtProg_{m,n}$.
\end{proof}

We turn our attention to proving Proposition~\ref{line to set}, as Proposition~\ref{set to line} is implied by an earlier result.

\begin{proof}[Proof of Proposition~\ref{set to line}]
This directly follows from Proposition~\ref{line wpp}.
\end{proof}

We will prove Proposition~\ref{line to set} by induction on $\kk$. To facilitate the proof, we preemptively address two cases. The next result addresses the first case.

\begin{proposition}\label{induc other case}
For every $V \in \Prog_{n-1,n}$, 
\[
\P_{L_{1,n}} (T_V < \infty) > 0.
\]
\end{proposition}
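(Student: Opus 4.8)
The plan is to exploit the fact that, because $m = n-1$, a single CAT step started from an $n$-element set retains exactly one of its elements and then rebuilds all of the others around it. Write $V = \{x_1,\dots,x_n\} \in \Prog_{n-1,n}$, with $\{x_1,\dots,x_{n-1}\}$ the $(n-1)$-element subset witnessing \eqref{seq boundary}, so that $V \setminus \{x_1,\dots,x_{n-1}\} = \{x_n\}$ and $x_j \in \bdy\big(\{x_n,x_1,\dots,x_{j-1}\}\big)$ for every $j \in \llb 1,n-1\rrb$. The one-step fact I would record first is: if $C \subset_n \Z^d$ satisfies $x_n \in C$, then $\P_C(\C_1 = V) > 0$. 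Indeed, with positive probability the first activation lands in $C \setminus \{x_n\}$, and then the remaining $n-2$ activations exhaust $C \setminus \{x_n\}$ (each substep has positive probability since $\phi$ is everywhere positive), leaving the intermediate set equal to $\{x_n\}$; transport can then place, in this order, $x_1,\dots,x_{n-1}$, each step being possible by \eqref{seq boundary} and of positive probability since $\phi$ is positive. Hence $\C_1 = \{x_n\}\cup\{x_1,\dots,x_{n-1}\} = V$ with positive probability, and this is $V$ exactly, not merely up to translation, since $x_n$ sits at its own location in $\Z^d$.

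It then remains to show that CAT can reach, from $L_{1,n}$, some $n$-element set containing the single point $x_n$; combined with the one-step fact and the Markov property, this finishes the proof. This is a pure reachability claim, which I would establish by ``marching'' a line segment through $\Z^d$ one coordinate direction at a time, using only transitions that are possible and hence, by Lemma~\ref{poss implies typ}, of positive probability. Treadmilling along $e_1$ (Lemma~\ref{lem treadmill 1}, applied repeatedly, together with its direction-reversed analogue to move in the $-e_1$ direction) brings $L_{1,n}$ to an $e_1$-oriented segment whose span contains the point $p_1 := (x_n\cdot e_1)e_1$. Keeping $p_1$ fixed and rebuilding the $n-1$ remaining elements into an $e_2$-oriented segment through $p_1$ is again a single legal CAT step (here we use $m = n-1$, so all but one element is free to be repositioned), after which treadmilling along $e_2$ makes the segment pass through $p_1 + (x_n\cdot e_2)e_2$. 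Iterating over all $d$ coordinate directions produces, with positive probability, an $n$-element line segment whose span contains $x_n$.

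Putting the two parts together: with positive probability $\C_t$ is an $n$-element set containing $x_n$ for some finite $t$, and then $\P_{\C_t}(\C_1 = V) > 0$ by the one-step fact, so $\P_{L_{1,n}}(T_V < \infty) \geq \P_{L_{1,n}}(\C_{t+1} = V) > 0$ by the (strong) Markov property. I do not expect a serious obstacle here; the only thing requiring care is the routine bookkeeping in the second paragraph --- verifying that an axis-oriented segment can be turned and marched to pass through an arbitrary lattice site through a finite list of explicitly exhibited transitions, so that no diameter-dependent probability bound is ever invoked, which is precisely what Lemma~\ref{poss implies typ} is meant to absorb. (Incidentally, $V$ is automatically connected, being built from $\{x_n\}$ by successively adjacent additions, but we do not use this.)
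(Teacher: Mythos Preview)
Your proof is correct and follows essentially the same approach as the paper's: march the line segment until it meets $V$, then rebuild $V$ in a single step using $m=n-1$. The paper's proof is three lines---it records that $V$ is connected (so any intersection point suffices, not just your $x_n$) and then says ``treadmill the elements of $L_{1,n}$ until they intersect $V$''---whereas you spell out the coordinate-by-coordinate marching that this entails; your version is more careful but not genuinely different.
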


\begin{proof}
If $V \in \Prog_{n-1,n}$, then $V$ is connected. Hence, if $L_{1,\nn}$ intersects $V$, then CAT can form $V$ in one step. Otherwise, we can treadmill the elements of $L_{1,\nn}$ until they intersect $V$.
\end{proof}

The next result addresses the second case.

\begin{proposition}\label{base of set to line induction}
For any $V \in \Prog_{1,\nn}$,
\[
\P_{L_{1,\nn}} \big( T_V < \infty \big) > 0.
\]
\end{proposition}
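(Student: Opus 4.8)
The plan is to reduce the statement to the existence of a path of ``legal single-element moves.'' When $m=1$ the CAT dynamics is transparent: one step activates a uniformly random element and then transports a single element to the boundary of the remaining $\nn-1$. Consequently, if $\nn$-element configurations $U,U'\subset\Z^d$ differ only by relocating one element, i.e.\ $U'=(U\setminus\{u\})\cup\{u'\}$ with $u\in U$ and $u'\in\bdy(U\setminus\{u\})$, then $\P_U(\C_1=U')>0$ by Lemma~\ref{poss implies typ}. So it is enough to produce, for the given $V\in\Prog_{1,\nn}$, a finite sequence of such moves leading from $L_{1,\nn}$ to $V$: the Markov property then gives $\P_{L_{1,\nn}}(T_V<\infty)>0$. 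I will use freely that a straight segment can be treadmilled (Lemma~\ref{lem treadmill 1}), bent, and ``reptated'' by legal moves, so that any segment of at least two elements can be navigated to an arbitrary position and orientation through empty space, and that, by translation invariance, the construction may start from a copy of $L_{1,\nn}$ placed wherever is convenient.

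I would first handle the case that $V$ is connected. Fix a depth-first order $v_1,\dots,v_\nn$ of $V$ so that each $v_i$ with $i\ge2$ is adjacent to some $v_j$ with $j<i$, and build $V$ vertex by vertex, maintaining the invariant that after step $i$ the configuration consists of the connected set $\{v_1,\dots,v_i\}$ together with the remaining $\nn-i$ elements arranged as a straight ``supply segment.'' To pass from step $i$ to step $i+1$, first treadmill or reroute the supply segment so that it avoids $v_{i+1}$ and the current configuration --- possible since the segment has at least two elements until the final step --- then activate an endpoint $w$ of the supply segment and transport it to $v_{i+1}$; this move is legal because $v_{i+1}$ is adjacent to some $v_j\in\{v_1,\dots,v_i\}$ and is unoccupied. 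After $\nn$ steps the configuration is exactly $V$. For a general $V$, let $C^1,\dots,C^J$ be its connected components; I would deposit them one at a time, keeping a ``supply'' of the not-yet-placed elements, navigating it to the site of the next component, assembling that component in place by the connected-$V$ routine above applied to an initial sub-segment of the supply, and then continuing with the shortened supply. Since $V\in\Prog_{1,\nn}$ has a component with two or more elements, I arrange to deposit such a component last; this keeps the supply at size at least two whenever it must be moved, and makes the final placement a single legal move.

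The main obstacle is the logistics of shuttling the supply segment around already-placed material without disturbing it and without the supply becoming too small to maneuver. For $d\ge2$ --- in particular in the regime $d\ge3$ of the paper --- this is routine: every placed component has bounded diameter, so the supply can be detached, routed around it through an $e_2$-shifted corridor, and reattached or deposited as needed. The case $d=1$ must be argued somewhat differently, since there is no room to detour past an obstacle: a connected $V\subset\Z$ is simply a translate of $L_{1,\nn}$, while a general $V\subset\Z$ is built by sweeping the supply monotonically along $\Z$ and depositing the components in that order; the one delicate point is that, when both extreme components are singletons, one of them should be deposited first during an initial excursion through the still-empty interior, so that the supply never shrinks to a single (hence immobile) element before the last deposit.
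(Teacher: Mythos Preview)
Your approach is correct in the paper's setting ($d\ge 3$) but takes a genuinely different route. The paper argues by induction on $n$: in the step from $n$ to $n+1$ it treadmills a pair of elements of $L_{1,n+1}$ far to the right, parks one at a distant site $a$, and returns the other, yielding $L_{1,n}\cup\{a\}$. It then applies the induction hypothesis, together with the ``no-interference'' Lemma~\ref{no interference lemma}, to turn the $L_{1,n}$ part into a carefully chosen $W\in\Prog_{1,n}$ that differs from $V$ in only two elements, while $a$ sits untouched; a final short sequence of explicit moves brings $a$ back and converts $W$ into $V$. Your direct vertex-by-vertex construction with a travelling supply segment is more concrete and avoids induction on $n$, but it trades that simplicity for navigation bookkeeping (rerouting the supply around already-placed material, depositing isolated singletons by first docking the supply next to them, ordering the components so the supply stays large enough to steer). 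The paper's isolate-then-induct scheme sidesteps all of that and has the added benefit that the very same template, with the same Lemma~\ref{no interference lemma}, is reused verbatim for the induction on $m$ in Proposition~\ref{line to set}.

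One small caveat outside the paper's scope: your $d=1$ sketch does not quite close. Depositing a single extreme singleton during an ``initial excursion'' is not enough when the remaining components again have singletons at both ends, e.g.\ $V=\{0,3,6,7,10,13\}$: after parking $13$ and sweeping left to right you end with a one-element supply that must reach the isolated $10$. This does not affect the statement in the paper's regime $d\ge 3$, where your rerouting argument goes through.
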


We omit a proof of the simple fact that if CAT can form $B$ from $A$, then CAT can form $B \cup \{a\}$ from $A \cup \{a\}$ for $a \in \Z^d$, so long as $\|a\|$ is sufficiently large.

\begin{lemma}\label{no interference lemma}
Let $A, B \in \cal S$. If $a \in \Z^\dd$ has sufficiently large $\|a\|$, then
\begin{equation}\label{no interference}
\P_A (T_B < \infty) > 0 \implies \P_{A \cup \{a\}} (T_{B \, \cup \, \{a\}} < \infty) > 0.
\end{equation}
\end{lemma}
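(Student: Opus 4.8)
The plan is to lift a fixed positive‑probability trajectory from $A$ to $B$ to one from $A\cup\{a\}$ to $B\cup\{a\}$ by keeping the extra element $a$ ``parked'' far away: along the lifted trajectory we never activate $a$ and never transport onto $a$ or onto a site that $a$ has spoiled, so $a$ simply rides along as an isolated singleton. Since $\P_A(T_B<\infty)>0$, we first fix a finite sequence of configurations $A=D_0,D_1,\dots,D_T=B$ with $\P_{D_t}(\C_1=D_{t+1})>0$ for $0\le t<T$ and, for each such $t$, a sequence $x^{(t)}=(x^{(t)}_1,\dots,x^{(t)}_{2m})\in(\Z^d)^{2m}$ realizing the transition $D_t\to D_{t+1}$ with $\P_{D_t}(\cal X_0=x^{(t)})>0$ (Section~\ref{tp formula}). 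Put $W:=\bigcup_{0\le t\le T}D_t$, a finite set; every coordinate $x^{(t)}_i$ lies in $W$, since the first $m$ coordinates lie in $D_t$ and the last $m$ in $D_{t+1}$. We then take $r_0:=2+\max_{w\in W}\|w\|$ and assume $\|a\|\ge r_0$, so that $\dist(a,W)\ge 2$; in particular $a\notin W$, the point $a$ is adjacent to no element of $W$, and $A\cup\{a\},B\cup\{a\}\in\cal S$.

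By the Markov property it suffices to show $\P_{D_t\cup\{a\}}(\C_1=D_{t+1}\cup\{a\})>0$ for each $t$, since chaining these yields $\P_{A\cup\{a\}}(T_{B\cup\{a\}}\le T)>0$. Because $a\notin\{x^{(t)}_i\}_i$, running activation and transport according to $x^{(t)}$ from the configuration $D_t\cup\{a\}$ produces exactly $D_{t+1}\cup\{a\}$, so it is enough to prove $\P_{D_t\cup\{a\}}(\cal X_0=x^{(t)})>0$. Writing $x=x^{(t)}$, $D=D_t$, and letting $(C_l')_{0\le l\le 2m}$ be the substep configurations of \eqref{cl def} built from $D$ and $(C_l)_{0\le l\le 2m}$ those built from $D\cup\{a\}$, one has $C_l=C_l'\cup\{a\}$ and $C_l'\subseteq W$ for every $l$. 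Positivity of $\P_D(\cal X_0=x)$ forces $x_{l+1}\in C_l'$ for $l<m$ and $x_{l+1}\in\bdy C_l'$ for $m\le l<2m$. Since $\dist(a,C_l')\ge\dist(a,W)\ge 2$, the element $a$ is not adjacent to $C_l'$, so $\bdy(C_l'\cup\{a\})=\bdy C_l'\cup\bdy\{a\}\supseteq\bdy C_l'$; hence $x_{l+1}\in C_l$ for $l<m$ and $x_{l+1}\in\bdy C_l$ for $m\le l<2m$. By the transition‑probability formula of Section~\ref{tp formula}, $\P_{D\cup\{a\}}(\cal X_0=x)$ is the product of $|D\cup\{a\}|^{-1}$ with factors $\mu_{C_l,x_l}(x_{l+1})$ for $l<m$ and $\mu_{\bdy C_l,x_l}(x_{l+1})$ for $m\le l<2m$; each such factor is $\phi(x_l,x_{l+1})$ divided by a finite positive sum, hence positive since $x_{l+1}$ lies in the relevant support set and $\phi>0$ everywhere by \eqref{d def}. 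Thus $\P_{D\cup\{a\}}(\cal X_0=x)>0$, completing the argument.

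The only point that needs care — and it is routine, which is presumably why the paper omits the proof — is the boundary bookkeeping in the final step: one must check that inserting the far‑away singleton $a$ does not delete any boundary site onto which the fixed trajectory transports, i.e. that $\bdy(C_l'\cup\{a\})\supseteq\bdy C_l'$. This is exactly what the condition $\dist(a,W)\ge 2$ buys, since it keeps $a$ non‑adjacent to every $C_l'$. The same distance condition also guarantees that $a$ is never coincident with any element of the trajectory and is never a forced activation (there are already $|D_t|\ge m+1>m$ non‑$a$ elements from which to draw the $m$ activated ones). No genuine obstacle arises.
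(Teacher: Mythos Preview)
The paper omits the proof entirely, simply describing the lemma as ``a simple fact'' before stating it. Your proposal correctly supplies the natural details the paper has in mind: fix a finite positive-probability trajectory $A=D_0,\dots,D_T=B$, take $a$ outside a radius that keeps it at distance at least $2$ from every intermediate configuration, and check that the same activation/transport sequences remain legal when the isolated singleton $a$ is adjoined. The boundary bookkeeping you highlight (that $\bdy C_l' \subseteq \bdy(C_l'\cup\{a\})$ when $\dist(a,C_l')\ge 2$) is exactly the point that makes this routine, and your verification is correct.
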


\begin{proof}[Proof of Proposition~\ref{base of set to line induction}]
We use induction on $\nn \geq 2$. Proposition~\ref{induc other case} addresses the base case of $\nn=2$.  
Accordingly, we assume that 
\begin{equation}\label{base ih}
\forall U \in \Prog_{1,n}, \quad \P_{L_{1,\nn}} (T_U < \infty) > 0,
\end{equation}
and we aim to show that
\begin{equation}\label{base ih aim}
\forall V \in \Prog_{1,n+1}, \quad \P_{L_{1,\nn+1}} (T_V < \infty) > 0.
\end{equation}

We will isolate $(\nn+1) e_1 \in L_{1,\nn+1}$ far from the origin. The induction hypothesis and Lemma~\ref{no interference lemma} will then allow us to form a set $W$, which is nearly $V$, from $L_{1,\nn}$. Specifically, we will use the induction hypothesis to form 
\[
     W = (V \setminus \{v,g\}) \cup \{\ell-e_1\},
\]
where $v$ is any non-isolated element of $V$, and where $g$ and $\ell$ are the greatest and least elements of $V$ in the lexicographic order. We will then change $W$ into $V$ in two subsequent steps.

We can form $V$ as follows.
\begin{enumerate}
    \item[(1)] {\em Isolate an element}. Starting from $L_{1,\nn+1}$, we treadmill the pair $\{\nn e_1, (\nn+1) e_1\}$ in the $e_1$ direction, until one of them, at say, $a$, has norm $\|a\|$ which is sufficiently large to ensure that \eqref{no interference} holds with $A = L_{1,\nn}$ and $B = W$. We then return the other element to $\nn e_1$ to form $L_{1,\nn} \cup \{a\}$.
    \item[(2)] {\em Use the induction hypothesis}. By design, $W \in \Prog_{1,n}$ because $W$ contains $\{\ell,\ell-e_1\}$. Hence, the induction hypothesis implies that we can form $W$ from $L_\nn$. By Lemma~\ref{no interference lemma}, we can in fact form $W \cup \{a\}$ from $L_{1,\nn} \cup \{a\}$. 
    \item[(3)] {\em Return the isolated element}. From $W \cup \{a\}$, we activate at $a$ and transport to $\ell-2e_1 \in \bdy W$.
    \item[(4)] {\em Treadmill the lex elements}. We then treadmill the pair $\{\ell-e_1,\ell-2e_1\}$ to $\{g,g+e_d\}$, which is possible because $\ell- 2e_1$ and $g$ are connected by a path which lies outside of $W$. The result is $(V \setminus \{v\}) \cup \{g+e_d\}$. Lastly, we activate at $g+e_d$ and transport to $v$, which is possible because $v$ is non-isolated in $V$.
\end{enumerate}
\end{proof}

The induction step in the proof of Proposition~\ref{line to set} is easier than that of Proposition~\ref{base of set to line induction}. The reason is that, for some $U \in \Prog_{1,\nn+1}$, there is no non-isolated $u \in U$ such that $U \setminus \{u\} \in \Prog_{1,n}$. However, for every $V \in \Prog_{m+1,\nn+1}$, there is a non-isolated $v \in V$ for which $V \setminus \{v\} \in \Prog_{m,n}$. (Note that, in the latter case, both $\nn+1$ and $\kk+1$ are decremented to $\nn$ and $\kk$.) This follows from the definition of $\Prog_{m+1,n+1}$, which requires that each of its members $V$ has a $(\kk+1)$-element subset $\{y_1,\dots,y_{\kk+1}\}$ which, in particular, satisfies
\begin{equation}\label{induc key obs}
y_{\kk+1} \in \bdy (V \setminus \{y_{\kk+1}\}) \quad \text{and} \quad V \setminus \{y_{\kk+1}\} \in \Prog_{m,n}.
\end{equation}
In other words, $y_{\kk+1}$ is the aforementioned $v$.

\begin{proof}[Proof of Proposition~\ref{line to set}]
We use induction on $\kk \geq 1$. The base case of $\kk = 1$ holds by Proposition~\ref{base of set to line induction}. Accordingly, we assume that, for all $n \geq \kk + 1$, 
\begin{equation}\label{induc ih}
\forall U \in \Prog_{m,n}, \quad \P_{L_{1,n}} ( T_U < \infty) > 0.
\end{equation}
We aim to prove that, for all $n' \geq \kk+2$,
\begin{equation}\label{induc aim}
\forall V \in \Prog_{m+1,n'}, \quad \P_{L_{1,n'}} ( T_V < \infty) > 0,
\end{equation}
In fact, we only need to address $n' \geq \kk+3$, because Proposition~\ref{induc other case} proves \eqref{induc aim} when $n' = \kk+2$.

Assume that $n' \geq \kk+3$. We can form $V$ as follows.
\begin{enumerate}
\item[(1)] {\em Isolate an element}. Starting from $L_{1,n'}$, we treadmill the pair $\{(n'-1)e_1, n'e_1\}$ in the $e_1$ direction, until one of them, at say, $a$, has a radius which is sufficiently large to ensure that \eqref{no interference} holds with $A = L_{1,n'}$ and $B = V \setminus \{y_{\kk+1}\}$. We then return the other element to $n' e_1$ to form $L_{1,n'-1} \cup \{a\}$.

Note that, to treadmill the pair without moving the other elements, we use the fact that $n' \geq \kk+3$. Specifically, with each step, we activate one element of the pair and $\kk$ of the other elements. Since there are at least $\kk+1$ elements which are not of the pair, we can transport the $\kk$ elements we activate to the sites at which they were activated.
\item[(2)] {\em Use the induction hypothesis}. By \eqref{induc key obs}, $V \setminus \{y_{\kk+1}\} \in \Prog_{m,n'-1}$. Hence, we can apply the induction hypothesis \eqref{induc ih} with $n = n'-1$ and $U = V \setminus \{y_{\kk+1}\}$ to form $U$ from $L_{1,n'-1}$. In fact, by Lemma~\ref{no interference lemma}, we can form $U \cup \{a\}$ from $L_{1,n'-1} \cup \{a\}$.
\item[(3)] {\em Return the isolated element}. Lastly, activate at $\{y_1,\dots,y_{\kk-1}\}$ and $w$, and transport to $\{y_1,\dots,y_\kk\}$ to form $V$.
\end{enumerate}
\end{proof}


\newcommand{\etalchar}[1]{$^{#1}$}

\end{document}